\numberwithin{equation}{section}
\numberwithin{figure}{section}
\renewcommand*{\thefootnote}{\fnsymbol{footnote}}
\title{Operator-splitting schemes for degenerate, non-local, conservative-dissipative systems}
\author{
\normalsize Daniel Adams\textit{$^{a,}$}\footnote{D.A was supported by The Maxwell Institute Graduate School in Analysis and its Applications, a Centre for Doctoral Training funded by the UK Engineering and Physical Sciences Research Council (grant EP/L016508/01), the Scottish Funding Council, Heriot-Watt University and the University of Edinburgh. } \\
        \small  d.t.s.adams@sms.ed.ac.uk 
\and 
\normalsize Manh Hong Duong\textit{$^{b}$}\footnote{The research of MHD was supported by EPSRC Grants EP/W008041/1 and EP/V038516/1.}\\
        \small  h.duong@bham.ac.uk
\and 
\normalsize Gon\c calo dos Reis\textit{$^{c,d,}$}\footnote{G.d.R. acknowledges support from the \emph{Funda{\c c}$\tilde{\text{a}}$o para a Ci$\hat{e}$ncia e a Tecnologia} (Portuguese Foundation for Science and Technology) through the project UIDB/00297/2020 (Centro de Matem\'atica e Aplica\c c$\tilde{\text{o}}$es CMA/FCT/UNL).} \\
        \small  G.dosReis@ed.ac.uk}
\date{
    \footnotesize 
    $^{a}$~Maxwell Institute for Mathematical Sciences School of Mathematics University of Edinburgh Edinburgh UK EH9 3FD
    \\
    $^{b}$~School of Mathematics, University of Birmingham, Birmingham B15 2TT, UK
    \\
    $^{c}$~School of Mathematics, University of Edinburgh, The King's Buildings, Edinburgh, UK
    \\
    $^{d}$~Centro de Matem\'atica e Aplica\c c$\tilde{\text{o}}$es (CMA), FCT, UNL, Portugal
    \\[2ex]
    \vspace{-0.8cm}
}
\theoremstyle{definition}
\newtheorem{theorem}{Theorem}[section]
\newtheorem{lemma}[theorem]{Lemma}
\newtheorem{defn}[theorem]{Definition}
\newtheorem{rem}[theorem]{Remark}
\newtheorem{prop}[theorem]{Proposition}
\newtheorem{assumption}[theorem]{Assumption}
\newcommand{\bN}{\mathbb{N}}
\newcommand{\bR}{\mathbb{R}}
\newcommand{\cA}{\mathcal{A}}
\newcommand{\cF}{\mathcal{F}}
\newcommand{\cP}{\mathcal{P}}
\newcommand{\x}{\mathbf{x}}
\newcommand{\divv}{\text{div}}
\definecolor{darkgreen}{rgb}{0,0.35,0}
\begin{document}

\selectlanguage{english}

\maketitle
\renewcommand*{\thefootnote}{\arabic{footnote}}

\begin{abstract} 
In this paper, we develop a natural operator-splitting variational scheme for a general class of non-local, degenerate conservative-dissipative evolutionary equations. The splitting-scheme consists of two phases: a conservative (transport) phase and a dissipative (diffusion) phase. The first phase is solved exactly using the method of characteristic and  DiPerna-Lions theory while the second phase is solved  approximately using a JKO-type variational scheme that minimizes an energy functional with respect to a certain Kantorovich optimal transport cost functional.  In addition, we also introduce an entropic-regularisation of the scheme. We prove the convergence of both schemes to a weak solution of the evolutionary equation. We illustrate the generality of our work by providing a number of examples, including the kinetic Fokker-Planck equation and the (regularized) Vlasov-Poisson-Fokker-Planck equation. 
\end{abstract}
{\bf Keywords:} 
Wasserstein gradient flows;  degenerate diffusions; variational principle;  operator-splitting methods; non-local partial differential equations; optimal transport; entropic regularisation.
\vspace{0.3cm}

\noindent
{\bf 2020 AMS subject classifications:}\\
Primary: 35K15, 35K55, Secondary: 65K05, 90C25

%

%
%
%
\footnotesize
\setcounter{tocdepth}{2}
\tableofcontents
\normalsize
\footnotesize

%
%
%

%
%
%

\section{Introduction}\label{section introduction}
\subsection{Dissipative systems and Wasserstein gradient flows}
\label{sec Wasserstein gradient flow}
In their seminal work \cite{jordan1998variational} Jordan, Otto and Kinderlehrer show that the linear Fokker-Planck Equation (FPE)
\begin{equation}
    \label{FPE}
    \partial_t \rho = \mathrm{div}(\rho\nabla f)+\Delta \rho, 
\end{equation}
which is the forward Kolmogorov equation of the overdamped Langevin dynamics
\begin{equation}
\label{overdamped SDE}
dX(t)=-\nabla f(X(t))\,dt+\sqrt{2}dW(t),
\end{equation}
can be viewed as a gradient flow of the free energy functional with respect to the Wasserstein metric. Thereby the solution of the FPE can be iteratively approximated by the following minimising movement (steepest descent) scheme: given a time-step $h>0$ and defining $\rho^0_h:=\rho_0$, then the solution $\rho^n_h$ at the $n$-th step, $n=1,..., \lfloor \frac{T}{h}\rfloor$, is determined as the unique minimiser of the following minimisation problem 
\begin{equation}
\label{eq discrete general grad flow}
    \min_{\rho} \Big\{\frac{1}{2h} W_2^2(\rho^{n-1}_h, \rho)+\mathcal{F}_{\textrm{fpe}}(\rho)\Big\} 
    \quad \textrm{with} \quad \mathcal{F}_{\textrm{fpe}}(\rho):=\int f\rho+\rho\log\rho,
\end{equation}
over the space of the probability measures with finite second moment. Since then, the theory of Wasserstein gradient flows on the space of probability measures has made enormous progress, spanning research activity in various branches of mathematics including partial differential equations, probability theory, optimal transportation as well as geometric analysis. It provides a unified framework and tools for studying well-posedness,  regularity,  stability  and  quantitative  convergence  to  equilibrium  of  dissipative  systems.  Over  the  last twenty years, many evolutionary PDEs for models in biology, chemistry, mechanics, and physics have been analysed via this framework. Examples include the Fokker-Planck equation, porous medium equations, thin-film equations, nonlinear aggregation-diffusion equations, interface evolutions, as well as pattern formation and evolution \cites{villani2021topics, ambrosio2008gradient,santambrogio2015optimal}. 
More recently, the theory has been extended to other settings such as discrete spaces \cites{Maas2011,Esposito2021} 
and quantum evolutions \cites{Carlen2017} 
and made intimate connections to the large deviation theory of stochastic processes \cites{adams2011large,Duong2013,Mielke2014}.
\subsection{Conservative-dissipative systems} Many important evolutionary equations arising from biology and physics are not gradient flows, but contain both conservative and dissipative dynamics. A prototypical example is the (generalized\footnote{In the classical Kramers equation, $F(p)=\frac{p^2}{2}$.}) Kramers' (or kinetic Fokker-Planck) equation \cites{Kramers40,risken1989fokker},
\begin{equation}
\label{KRequation}
\partial_t \rho = \underbrace{\Big(- \mathrm{div}_q (\rho p) + \mathrm{div}_p (\rho \nabla_q V)\Big)}_{\text{conservative part}}+ \underbrace{\Big(\mathrm{div}_p (\rho\nabla_p F) + \Delta_p \rho\Big)}_{\text{dissipative part}},
\end{equation}
for a density $\rho$ depending on $t\in \bR_+,q,p\in \bR^d$. In the above equation, we use the notation $\mathrm{div}_q$ and similar to indicate that the differential operator acts only on one variable. The Kramers equation is the forward Kolmogorov equation of the underdamped Langevin dynamics
\begin{equation}
    \label{eq:SDE}
d\begin{pmatrix}
Q\\P
\end{pmatrix}=\underbrace{\begin{pmatrix}
P\\ -\nabla V(Q)
\end{pmatrix}\,dt}_{\text{conservative dynamics}}+\underbrace{\begin{pmatrix}
0\\-\nabla F(P)\,dt+\sqrt{2}\, dW_t
\end{pmatrix}}_{\text{dissipative dynamics}}
\end{equation}
The Langevin dynamics~\eqref{eq:SDE} describes the movement of a particle (with unity mass) at position $Q$ and with momentum $P$ under the influence of three forces: an external force field ($-\nabla V(Q)$), a (possibly non-linear) friction ($-\nabla F(P)$) and a stochastic noise ($\sqrt{2}dW_t$). The Kramers equation \eqref{KRequation} characterizes the time evolution of the probability of finding the particle at time~$t$ at position $q$ and with momentum $p$. Unlike the Fokker-Planck equation \eqref{FPE}, which is purely dissipative, the Kramers equation~\eqref{KRequation} is a mixture of both conservative and dissipative dynamics. The first part in~\eqref{eq:SDE} is a deterministic Hamiltonian system with Hamiltonian energy $H(q,p) = p^2/2 + V(q)$. The evolution of this part is reversible and conserves the Hamiltonian. Correspondingly, the first part of~\eqref{KRequation} is also reversible and conserves the expectation of $H$,
\[
\mathcal H(\rho) := \int_{\mathbb{R}^{2d}} \rho(q,p)H(q,p) \, dqdp.
\]
On the other hand, the second part of \eqref{eq:SDE} is an overdamped Langevin dynamics (cf. \eqref{overdamped SDE}), but only in the $p$-variable. The corresponding part in \eqref{KRequation} is precisely a Fokker-Planck equation in $p$-variable (cf. \eqref{FPE}), which is a Wasserstein gradient flow in the $p$-variable. Because of the mixture of both conservative and dissipative effects, the full Kramers equation~\eqref{KRequation} is not a gradient flow, and the theory of Wasserstein gradient flows, in particular the JKO-minimizing movement scheme \eqref{eq discrete general grad flow}, is not directly applicable.

It is desirable to develop operator-splitting methods for conservative-dissipative systems that reflect the same division between conservative and dissipative effects. Developing structure-preserving schemes is currently of great interest both theoretically and computationally, according to
\cite{ottinger2018generic} ``an important challenge for the future is how the structure of thermodynamically admissible
evolution equations can be preserved under time-discretization, which is a key to successful numerical calculations''. For the Kramers equation, such an operator-splitting scheme is introduced in \cite{duong2014conservative}. However, the scheme developed in that work uses a complicated optimal transport cost functional for the dissipative part which does not capture the fact that it is simply a Wasserstein gradient flow in the momentum variable. More recently in \cites{carlier2017splitting} the authors introduce an operator-splitting scheme for a non-degenerate conservative-dissipative non-local-nonlinear diffusion equation
$$
\partial_t \rho + \divv \big( \rho b[\rho]\big)  =  \Delta P(\rho) + \divv \big(\rho\nabla f \big),
$$
where $b[\rho]$ is a divergence-free vector field for each $\rho$, and $P$ is the non-linear pressure function. The above equation does not cover the Kramers equation since the latter is a degenerate diffusion, in which the Laplacian only acts on the momentum variables. The degeneracy of the Kramers equation can also be seen in the Langevin dynamics \eqref{eq:SDE} where the noise is present only in the momentum but not on the position variables. A natural question arises
\begin{center}
\textit{Can we develop operator-splitting schemes for non-local, degenerate conservative-dissipative systems?}
\end{center}
In this paper, we address this question by providing a simple operator-splitting scheme for a general class of non-local, degenerate, conservative-dissipative systems. 

\subsection{Our contribution} 
We consider a general class of degenerate, non-local, conservative-dissipative evolutionary equations of the form 
\begin{equation}\label{eq main PDE}
    \partial_t \rho + \divv \big( \rho b[\rho]\big)  =   \divv \big(A\big(\nabla \rho+\rho\nabla f \big)\big),~~~~\rho(0,\cdot)=\rho_0(\cdot),
\end{equation}
where the unknown $\rho$ is a time dependent probability distribution on $[0,T]\times \bR^d$, $A\in \bR^{d\times d}$ is a semi-positive definite (symmetric) matrix (possibly degenerate), $f:\bR^d\to \bR$ a given energy potential,  $b:\cP(\bR^d) \times \bR^d\to \bR^d$ a divergence free  non-local vector field, and the probability density $\rho_0\in \cP_2^r(\bR^d)$ is the initial condition. Here, and throughout we denote $\cP(\bR^d)$ the space of Borel probability measures on $\bR^d$, $\cP_2(\bR^d)$ those with finite second moment, and $\cP_2^r(\bR^d)$ as those which are additionally absolutely continuous (with respect to the Lebesgue measure). Equation \eqref{eq main PDE} can be viewed as the forward Kolmogorov equation describing the time evolution of the distribution $\rho$ associated to the stochastic process $X$ satisfying the following SDE of McKean type
\begin{align}\label{eq SDE of McKean type}
   &dX(t)=b[\rho_t](X(t))dt- A \nabla f(X(t)) dt+\sqrt{2\sigma} dW(t),\qquad \rho_t=\text{Law}(X_t),
\end{align}
for a constant diffusion matrix $\sigma$, with $\sigma \sigma^T = A$. This serves as a general model for the dynamic limit of interacting particles, evolving under the influence of (weakly, self-stabilising) interaction force $b[\rho]$ depending on the own law of the process, and a potential drift $\nabla f$, whilst being perturbed by noise $W(t)$. Like the Kramers equation, \eqref{eq main PDE} contains both conservative and dissipative effects. The conservative part is represented via the divergence-free vector field (the transport part in the left-hand side of \eqref{eq main PDE}), in particular implying that the entropy will be preserved under this part. On the other hand, the dissipative part is given by the right hand side of \eqref{eq main PDE}, which resembles a $A$-Wasserstein gradient flow \cite{lisini2009nonlinear} (but note that $A$ can be degenerate). The aim of this paper is to develop operator-splitting schemes, which capture the conservative-dissipative splitting and take into account the degeneracy of the diffusion matrix, for solving \eqref{eq main PDE}.

Our operator-splitting scheme can be summarised as follows (details follow in Section \ref{section main result}).

\textbf{The Operator-splitting scheme.} We split the
dynamics described in \eqref{eq main PDE} by two phases:
\begin{enumerate}
    \item \textit{Conservative (transport) phase}: for a given $\rho$, we solve the conservative part, which is simply a transport equation, using the method of characteristics
    $$
    \partial_t \rho + \divv \big( \rho b[\rho]\big)=0
    $$
    The existence of a solution to the above equation under a transport/push-forward map is guaranteed by DiPerna Lions theory \cite{diperna1989ordinary}.
    \item \textit{Dissipative (diffusion) phase}: we solve the dissipative (diffusion) part using a JKO-miminimizing movement scheme
    \begin{equation}
    \label{eq: dissipative part}
    \partial_t\rho= \divv \big(A\big(\nabla \rho+\rho\nabla f)\big).    
    \end{equation}
   We emphasize again that we allow the diffusion matrix $A$ to be degenerate. Because of the degeneracy of $A$, the JKO-scheme \eqref{eq discrete general grad flow} using the $A$-weighted Wasserstein matrix developed in \cite{lisini2009nonlinear} is not applicable. To overcome this difficulty, we use a simple idea, that is to use a small perturbation of $A$ to get a symmetric positive definite matrix. The key novelty here is that we perturb $A$ by $A_h:=A+hI$ where $h$ is the time-step in the discretisation scheme. Note that this perturbation is equivalent to adding a small amount of noise into each component of the stochastic dynamics \eqref{eq SDE of McKean type}. Thereby, we solve the dissipative (diffusion) equation iteratively using the minimizing movement scheme: $\rho^{n+1}_h$ is determined as the unique minimizer of the minimization problem
   $$
   \min_\rho \Big\{\frac{1}{2h} W_{c_h}(\rho,\rho^{n}_h)+ \int f\rho+\rho\log\rho\Big\},
   $$
   where (see Section \ref{section main result}; $\Pi(\mu,\nu)$ denotes the set of transport plans between the measures $\mu,\nu$)
   $$
   W_{c_h}(\mu,\nu)=\inf_{\gamma\in \Pi(\mu,\nu)}\int_{\mathbb{R}^{2d}}\big\langle\, (A+hI)^{-1}(x-y), (x-y)\,\big\rangle \,d\gamma(x ,y).
   $$
\end{enumerate}
Our main result, Theorem \ref{Theorem Main}, establishes the convergence of the above splitting-scheme to a weak solution of \eqref{eq main PDE} as the time step $h$ tends to zero. Our operator-splitting scheme is simple and natural capturing the conservative-dissipative splitting of the dynamics, in particular the expectation that the dissipative part is a $A$-weighted Wasserstein gradient flow. Furthermore, motivated by the efficiency of entropic regularisation methods in computational performances, 
in Theorem \ref{theorem reg-main} we also provide an entropic regularisation of the above scheme. We expect that the entropic regularisation scheme will be useful when one performs numerical simulations although we do not pursue it here. Our result offers a unified approach to establish existence results for a wide class of degenerate, non-local, conservative-dissipative systems. In fact, the class of \eqref{eq main PDE} is rich and includes many cases of interest: the linear and kinetic Fokker-Planck \cite{risken1989fokker}, the (regularised) Vlasov Poisson Fokker-Planck \cite{hwang2011vlasov}, the linear Wigner Fokker-Planck \cite{arnold2012wigner}, and higher-order degenerate diffusions approximating the generalised Langevin and generalised Vlasov equations \cites{ottobre2011asymptotic,Duong2015NonlinearAnalysis}. We will discuss in details these concrete applications in Section \ref{section examples}.
\medskip

\textbf{Comparison to existing literature.} There is a vast literature on operator-splitting methods for solving PDEs, see e.g. \cites{GlowinskiMR3587821SplitMethodsBook}. 
We now compare our work with the most relevant literature where the dissipative dynamics involves a Wassertein-type gradient flow. The closest paper to ours are \cites{carlier2017convergence,bernton2018langevin} where the authors consider equations of the form \eqref{eq main PDE}  and introduce similar operator-splitting schemes. However, these papers are limited to non-degenerate diffusion matrices $A$ ($A=I$ in these papers). \textcolor{black}{In fact, \cite{bernton2018langevin} does not deal with mixed dynamics, the splitting is carried out at the level of the gradient flow}.  \textcolor{black}{In \cite{yao2013blow}, the authors implement a numerical method that splits an aggregation-diffusion equation, where they exploit its transport structure using a Lagrangian method for the aggregation part, and employ an implicit finite-difference scheme for the diffusion part. Our splitting method is of a different nature, in that we would treat \cite{yao2013blow}[Equation (1.1)] as a dissipative equation with no conservative dynamics.} Other works that also develop operator-splitting schemes for degenerate PDEs are \cites{carlen2004solution,marcos2020solutions}, however these works only deal with a linear, local conservative dynamics and use a rather involved splitting mechanism. Several papers including \cites{Huang00,duong2014conservative,DuongTran18, adams2021entropic} also develop JKO-type minimizing movement schemes for degenerate diffusion equations; however these papers use  one-step schemes where the cost functions are often non-homogeneous, time-step dependent and do not induce a metric. We also mention recent works in which operator-splitting methods have been investigated for partial differential equations containing a Wasserstein gradient flow part and a non-Wasserstein part. The papers \cites{bowles2015weak,DuongLu2018} construct operator-splitting schemes for fractional Fokker-Planck equations, in which the transport phase is solved by a JKO-type minimizing movement scheme while the fractional diffusion is solved exactly by convolution with the fractional heat kernel. More recently, \cite{Liu2021} builds operator-splitting scheme for reaction-diffusion systems with detailed balanced based on an energetic variational formulation of the systems.
\medskip

\textbf{Future work.}  From a modelling perspective the non-local term $b$ captures the interactions between a large ensemble of particles. In this case, it takes the form of a convolution between the density distribution and a certain kernel, and our assumptions require the kernel to be uniformly bounded and Lipschitz. However, many fundamental models of interacting particle systems compose of singular interaction kernels \cites{Jabin2018,serfaty2020mean}.
This leads to the natural and challenging question: can our method be generalized to deal with singular interaction kernels? 
  In this paper, we demonstrate via the regularised Vlasov-Poisson-Fokker-Planck equation that our method is applicable when one regularises the Coulomb interaction (see Section \ref{sec reg VPFPE})
Another interesting question is whether we can use the variational structure developed in this paper to study exponential convergence to the equilibrium of degenerate PDEs of the form \eqref{eq main PDE}. This is related to the hypocoercivity theory introduced by Villani \cite{dric2009hypocoercivity}, and a method using variational structures would provide further physical insight to the theory. These themes are left for future work.

\subsection{Organisation of the paper} In Section \ref{section main result} we set up the notations and present the operator-splitting scheme, assumptions, and the main result of this paper. The proof of the main result is given in Section \ref{section proof of the main result}.  In Section \ref{section entropy regularisation} we show how the scheme can be regularised. Section \ref{section examples} provides several explicit examples to which our work can be applied to. Finally, the Appendix contains detailed computations and proofs of technical results.

\section{The operator-splitting scheme, assumptions and our main result}\label{section main result}
In this section, we first introduce notations that will be used throughout the paper, then we present the operator-splitting scheme and assumptions and finally, we state the main result of this paper, Theorem \ref{Theorem Main}.

\subsection{Notation}
\label{sec: notation}

Throughout $d\in \bN$ will be the dimension of the space. A fixed $T>0$ denotes the length of the time interval we consider. Throughout, $C$ denotes a constant whose value may change without indication and depends on the problem's involved constants, but, critically, it is independent of key parameters of this work, namely the time step $h>0$ and number of iterates  $N\in \bN$ of the scheme introduced in Section \ref{section introduction}. The Euclidean inner product between two vectors $x,y \in \bR^d$ will be written as $x \cdot y$  or sometimes $\langle x,y \rangle$. We write $\|\cdot\|$ as the Euclidean norm on $\bR^d$, and $|\cdot|$ when $d=1$. The symbol  $\|\cdot\|$ is also used as the $2$-norm on $\bR^{d\times d}$. For a matrix $A$ let $A^T$ be its transpose. 


Let $\Omega \subseteq \bR^d $, we write $|\Omega|$ as its $d-$dimensional Lebesgue measure.
The space of Lebesgue $m-$integrable functions on $\Omega$ is denoted by $L^m(\Omega)$. The Sobolev space of functions in $L^1(\Omega)$ with first weak derivatives also in $L^1(\Omega)$ is denoted $W^{1,1}(\Omega)$. We say that $f\in L^{1}_{\text{loc}}(\bR^d)$ if $f\in L^1(\Omega)$ for any compact $\Omega\subset \bR^d$. We define the space $f\in W^{1,1}_{\text{loc}}(\bR^d)$ similarly.  The supremum norm $\|\cdot\|_{\infty,\Omega}$ of a vector field $\phi : \Omega \to \bR^d$, or a function $\phi : \Omega \to \bR$, is used to denote $\sup_{x\in \Omega}\|\phi(x)\|$, $ \sup_{x\in \Omega}|\phi(x)|$ respectively, when $\Omega=\bR^d$ we just write $\|\cdot\|_{\infty}$. We use the Landau ``big-O'' notation $\phi(h)=O(\varphi(h))$, for functions $ \phi,\varphi : \bR_+ \to \bR $ to mean that there exists $C,h_0>0$ such that $|\phi(h)|\leq C \varphi(h)$ for all $h<h_0$ and we say a matrix $B\in \bR^{d\times d}$ is $O(h)$ if $\max_{i,j}|B_{i,j}|\leq C h$. 

Let $A,B\subseteq \bR^d$, define $C^k(A;B)$ as the $k$-times continuously differentiable functions from $A$ to $B$ with continuous $k^{th}$ derivative. Define $C^\infty_c(A;B)$ as the set of infinitely differentiable functions from $A$ to $B$ with compact support. \textcolor{black}{We specifically write  $C^\infty_c(\bR^d)$ to denote infinitely differentiable functions from $\bR^d$ to  $\bR$ with compact support. Let $C_b(\bR^d)$ be the set of continuous bounded functions from $\bR^d$ to $\bR$.}
Let $\nabla \phi$, $\Delta \phi$, and $\nabla^2 \phi$ be the gradient, Laplacian, and Hessian respectively, of a sufficiently smooth function $\phi : \bR^d \to \bR $. For a sufficiently smooth vector field $\eta : \bR^d \to \bR^d$ let $\divv (\eta)$, and $D\eta$ be its divergence and Jacobian respectively. We call `$\text{id}$' the identity map on any space. 


Denote the space of Borel probability measures on $\bR^d$ as $\cP(\bR^d)$. The second moment $M$ of a measure $\rho \in \cP(\bR^d)$ is defined as
\begin{equation}\label{eq second moment definition}
    \cP(\bR^d) \ni \rho \mapsto 
M(\rho):=\int_{\bR^d}\|x\|^2\rho(dx).
\end{equation}
The set of probability measures with finite second moments is denoted $\cP_2(\bR^d)$, 
\begin{equation}
\label{eq: measure second moment space definition}
    \cP_2(\bR^d):=\{\rho \in \cP(\bR^d)~:~M(\rho)<\infty\}.
\end{equation}
Define $\cP_2^r(\bR^d)$ as those $\rho \in \cP_2(\bR^d)$ which are absolutely continuous. Throughout, when a measure is said to be `absolutely continuous' we implicitly mean with respect to the Lebesgue measure . We will use the same symbol $\rho$ to denote a measure $\rho \in \cP_2^r(\bR^d)$ as well as its associated density. Define $H$ to be the negative of Boltzmann entropy,
\begin{equation}\label{eq bolztman entropy definition }
    \cP(\bR^d) \ni \rho \mapsto 
H(\rho):= \begin{cases}
    \int_{\mathbb{R}^{d}} \rho \log \rho, &\text{if $\rho \in \cP^r_2(\bR^d)$}
    \\
    +\infty, & \textrm{otherwise,}
    \end{cases}
\end{equation}
 which throughout we will just refer to as the entropy. Also define the positive part of the entropy as 
 
 \begin{equation}
    \cP(\bR^d) \ni \rho \mapsto 
H_+(\rho):= \begin{cases}
    \int_{\mathbb{R}^{d}} \max\{\rho \log \rho,0\}, &\text{if $\rho \in \cP^r_2(\bR^d)$}
    \\
    +\infty, & \textrm{otherwise},
    \end{cases}
\end{equation}

and the negative part of the entropy as 

\begin{equation}
    \cP(\bR^d) \ni \rho \mapsto 
H_-(\rho):= \begin{cases}
    \int_{\mathbb{R}^{d}} | \min\{\rho \log \rho,0\}|, &\text{if $\rho \in \cP^r_2(\bR^d)$}
    \\
    +\infty, & \textrm{otherwise}.
    \end{cases}
\end{equation}

\color{black}
The set of transport plans between given measures $\mu,\nu \in \cP_2(\bR^d)$ is denoted by $\Pi(\mu,\nu)\subset \cP_2(\bR^{2d})$. That is, for $\mu,\nu \in \cP_2(\bR^d)$, $\gamma \in \Pi(\mu,\nu)$ if $\gamma(\mathcal{B}\times \bR^d)=\mu(\mathcal{B})$ and $\gamma( \bR^d \times \mathcal{B})=\nu(\mathcal{B})$ for all Borel sets $\mathcal{B}\subset \bR^d$. Let $\Pi^r(\mu,\nu)$ be those $\gamma \in \Pi(\mu,\nu)$ which are absolutely continuous. We denote a sequence of probability measures indexed by $k\in \bN$ as $\{\mu_k\}_{k\in \bN}$ which we relax to $\{\mu_k\}$. We use the symbol $\rightharpoonup$ to mean the weak  convergence of measures, that is $\rho_k \rightharpoonup \rho$ (weakly) if 

\begin{equation}\label{eq weak convergence definition}
\lim_{k \to \infty} \int f d\rho_k = \int f d\rho, ~~\forall f\in C_b(\bR^d).     
\end{equation}
We also recall that if it is known that $\rho \in \cP(\bR^d)$ then the weak convergence \eqref{eq weak convergence definition} is equivalent to narrow convergence, that is convergence against $C^\infty_c(\bR^d)$ functions. For any two subsets $P,Q\subset \cP_2(\bR^d)$ we denote $\Pi(P,Q)$ as the set of transport plans whose marginals lie in $P$ and $Q$ respectively. For a vector field $\eta:\bR^d\to \bR^d$ and measure $\mu\in\cP(\bR^d)$ we write $(\eta)_{\#}\mu$ as the push-forward of $\mu$ by $\eta$. For any probability measure $\gamma$ and function $c$ on $\bR^{2d}$ we write
$$
(c,\gamma):=\int_{\bR^{2d}} c(x,y) d\gamma(x,y).
$$
We use the symbol $*$ to denote the convolution, that is for a vector field $K:\bR^{d_1} \to \bR^{d_1}$ and a measure $\rho\in \cP(\bR^{d_2})$, $K*\rho : \bR^{d_1} \to \bR^{d_1}$ is defined as
\begin{equation}
    K*\rho(x):=\int_{\bR^{d_2}} K(x-x')\rho(x',z) dx'dz,
\end{equation}
where $x,x'\in \bR^{d_1}$ and $z\in \bR^{d_2-d_1}$. Lastly, the $2$-Wasserstein distance on $\cP_2(\bR^d)$ is denoted by $W_2$.

\subsection{The operator-splitting scheme}
\label{sec the scheme}
We now present our operator-splitting scheme for solving \eqref{eq main PDE}. Denote the free energy $\cF :\ \cP_2^r(\bR^d)\to \bR$ as the sum of the potential energy and the entropy
\begin{equation*}
    \cF(\rho):=F(\rho)+H(\rho),
\end{equation*}
where
\begin{equation*}
    F(\rho):=\int_{\bR^{d}} \rho f  dx,
    \quad\textrm{and}\quad  H(\rho):=\int_{\bR^{d}}\rho\log(\rho) dx.
\end{equation*}
\paragraph{Operator-splitting scheme:} Let $T>0$ denotes the terminal time and $\rho_0\in \cP_2^r(\bR^d)$ be given, with $\cF(\rho_0)<\infty$. Let $h>0$, $N\in\bN$ be such that $hN=T$, and let $n\in\{0,\ldots,N-1\}$. Set $\rho^0_h=\tilde{\rho}^0_h=\rho_0$. 
Given $\rho^n_h$, our operator-splitting to determine $\rho^{n+1}_h$ consists of two phases
\begin{enumerate}
    \item \textit{Conservative (transport) phase}:  first we introduce the push forward by the Hamiltonian flow as
\begin{equation}\label{eq scheme push forward}
    \tilde{\rho}^{n+1}_h=X^n_h(h,\cdot)_{\#}\rho^n_h,
\end{equation}
where $X_h^n : \bR^+\times \bR^d\to \bR^d$ solves the ODE 
\begin{equation}\label{eq definition of the flow}
\begin{cases}
&\partial_t X^n_h = b[\rho_h^n]\circ X^n_h,
\\
& X^n_h(0,\cdot)=\text{id}.
\end{cases}
\end{equation}
\item \textit{Dissipative (diffusion) phase}: next, define $\rho^{n+1}_h$ as the minimizer of the following JKO-type optimal transport minimization problem
\begin{equation}\label{eq the JKO step}
\rho^{n+1}_h
=\underset{\rho\in \cP_2^r(\bR^d)}{\text{argmin}}\Big\{ \frac{1}{2h}W_{c_h}(\tilde{\rho}_h^{n+1},\rho)+\cF(\rho) \Big\},
\end{equation}
where $W_{c_h}$ is a Kantorovich optimal transport cost functional, defined for $h>0$ as
\begin{equation}\label{eq : minimisation problem}
    W_{c_h}(\mu,\nu):= \inf_{\gamma \in \Pi(\mu,\nu)} \int c_h(x,y) d\gamma(x,y),
\end{equation}
with the cost function $c_h :\bR^{2d}\to \bR$ given by
\begin{equation}\label{eq weighted wasserstein}
    c_h(x,y):= \big\langle A_h^{-1}(x-y), (x-y)\big\rangle,
\end{equation}
for the matrix $A_h \in \bR^{d\times d}$ defined as
\begin{equation}\label{eq the matrix A}
    A_h:=A+hI.
\end{equation}
\end{enumerate}
Note that since $A$ is symmetric positive semi-definite,  the addition of $hI$ to $A$ guarantees that $A_h$ is symmetric positive definite (see Lemma \ref{lemma the cost function i.e matrix A}). Hence, $c_h$ is well defined for all $h>0$ and $\sqrt{c_h}$ defines a metric on $\bR^d$, which in-turn means $W^{1/2}_{c_{h}}$ defines a metric on $\cP_2(\bR^d)$. This is precisely a $A_h$-weighted Wasserstein distance \cite{lisini2009nonlinear}.  The above perturbation can be also effectively achieved by adding a small noise $\sqrt{2h}dW(t)$ to the SDE \eqref{eq SDE of McKean type}. We mention that if the matrix $A$ is invertible then there is no need to perform the perturbation. 
Instead we can adopt the scheme with $c_h(x,y)=c(x,y):=\big\langle A^{-1}(x-y), (x-y)\big\rangle $ and all results would remain true. This is the case for the Linear Wigner Fokker-Planck, see Section \ref{sec example linear wigner}. 

For each $n\in \{0,\ldots, N\}$ we denote  $\tilde{\gamma}^{n,c}_h,\tilde{\gamma}^n_h \in \Pi(\tilde{\rho}^n_h,\rho^n_h)$, as the following optimal couplings (respectively)
\begin{equation}\label{eq optimal coupling 1}
    W_{c_{h}}(\tilde{\rho}^{n}_h,\rho^n_h)
    =\int_{\bR^{2d}} c_h(x,y) d\tilde{\gamma}^{n,c}_h(x,y),
    \qquad
    W^2_{2}(\tilde{\rho}^{n}_h,\rho^n_h)
    =\int_{\bR^{2d}} \|x-y\|^2 d\tilde{\gamma}^{n}_h(x,y),
\end{equation}
and for $n\in \{0,\ldots, N-1\}$ we define $\gamma^{n}_h \in \Pi(\rho^n_h,\tilde{\rho}^{n+1}_h)$ as the optimal coupling 
\begin{equation}\label{eq optimal coupling 2}
    W^2_{2}(\rho^{n}_h,\tilde{\rho}^{n+1}_h)=\int_{\bR^{2d}} \|x-y\|^2 d\gamma^{n}_h(x,y).
\end{equation}
The optimal couplings in \eqref{eq optimal coupling 1} and \eqref{eq optimal coupling 2} are all well defined, see Lemma \ref{lemma 25}. 
Throughout this work we will adopt the notation that $t_n=nh$ for $n\in\{0,\ldots, N\}$. Consider the following piece-wise constant in time  interpolations
of $\{\rho^n_h\}_{n=0}^{N}$
\begin{equation}\label{eq interpolation 1}
    \rho_h(t,\cdot):=\rho_h^{n+1}~\text{for}~t\in [t_n,t_{n+1}),
\end{equation}
and of $\{\tilde{\rho}^n_h\}_{n=0}^{N}$ 
\begin{equation}\label{eq interpolation 2}
\tilde{\rho}_h(t,\cdot):=   \tilde{\rho}_h^{n+1}~\text{for}~t\in [t_n,t_{n+1}),
\end{equation}
and \textcolor{black}{consider the interpolation of $\{\tilde{\rho}^n\}_{n=0}^N$, which continuously follows the conservative dynamics}
\begin{equation}\label{eq interpolation 3}
     \rho^{\dag}_h(t,\cdot):= \big(X^n_h(t-t_n,\cdot)\big)_{\#}\rho^n_h~\text{for}~t\in [t_n,t_{n+1}),
\end{equation}
so that for $t\in[t_n,t_{n+1})$, \textcolor{black}{ $\rho^{\dag}_h(t)=\mu(t-t_n)$} where $\mu$ is the solution of  the continuity equation (see Lemma \ref{lemma 12})
\begin{equation}\label{eq 5}
    \begin{cases}
    \partial_t \mu(t,\cdot) + \divv \big( \mu(t,\cdot) b[\rho_h^n]\big)=0
    \\
    \mu(t,\cdot)|_{t=0}=\rho^n_h.
    \end{cases}
\end{equation}
We now introduce assumptions on the potential $f$, the non-local vector field $b$, and the diffusion matrix $A$. Under these assumptions we will prove the well-posedness of the splitting scheme and the convergence of the interpolations \eqref{eq interpolation 1}-\eqref{eq interpolation 3} to a weak solution of \eqref{eq main PDE}.

\begin{assumption}
\label{assumption : main assumptionso on drift and diffusion matrix}

The potential energy $f\in C^1(\bR^d)$ is assumed to be non-negative $f(x) \geq 0$, and Lipschitz, that is there exists a constant $C>0$ such that for any $x,y\in\bR^d$ 
\begin{equation}
\label{assumption f non-negative and Lipschitz}
    |f(x)-f(y)|\leq C \|x-y\|.
\end{equation}
For the non-local drift $b:\cP(\bR^d)\times \bR^d \to \bR^d$, we assume that there exists $C>0$ such that for any $\mu \in \cP_2(\bR^d)$ 
\begin{equation}\label{eq assump 4}
    \| b[\mu](x) \|\leq C\big(1+\|x\|\big), ~\forall x \in \bR^d,
    \quad 
    b[\mu]\in W^{1,1}_{\text{loc}}(\bR^d),\qquad \divv(b[\mu])=0.
\end{equation}
Moreover, we assume there exists $C>0$ for all $\mu,\nu \in \cP_2(\bR^d)$
\begin{equation}\label{eq assump 3}
    \int_{\bR^d} \|b[\nu](x)-b[\mu](x)\|^2 d \nu(x) \leq C W_2^2(\nu,\mu).
\end{equation}
Lastly assume $A$ is a semi-positive definite (symmetric), constant matrix $A\in \bR^{d\times d}$.
\end{assumption}

\begin{rem}[Commenting on the assumptions]
The Lipschitz assumption on $f$ is standard when working on the space of probability measures with finite second moments, particularly ensuring that the free energy functional is well-defined. In terms of the assumptions on the non-local vector field $b$,  \eqref{eq assump 4} implies well-posedness of the transport problem via DiPerna-Lions theory \cite{diperna1989ordinary}. Moreover, imposing the regularity in the measure component \eqref{eq assump 3} allows us to obtain upper-bounds for some error terms when proving the convergence of the scheme to a weak solution of \eqref{eq main PDE}. Note that when $b$ takes the form of a convolution with an interaction kernel, \eqref{eq assump 3} is satisfied when the kernel is uniformly bounded, Lipschitz and differentiable, which are the cases for the examples in Section \ref{section examples}. Note that the above assumptions have been also made in \cite{carlier2017splitting}.
\end{rem}
We now make the definition of a weak solution to \eqref{eq main PDE} precise.
\begin{defn}[Weak solution]
\label{def: weak formulation general PDE}
The curve $\rho:[0,T]\to \cP_2^r(\bR^d)$, $t\mapsto \rho(t,\cdot)$, is called a weak solution to the general evolution equation \eqref{eq main PDE} if for all $\varphi \in C^\infty_c([0,T]\times \bR^d)$ we have 
\begin{align}
\label{eq weak solution of main eq}
\int_{0}^{T}\int_{\bR^d} \rho(t,x)\Big(\partial_t \varphi(t,x) + \big( b[\rho(t,\cdot)](x) \textcolor{black}{-} A \nabla f(x) \big) \cdot \nabla \varphi (t,x) + \divv \big(A \nabla \varphi (t,x)\big) \Big)dxdt + \int_{\bR^d} \rho^0(x) \varphi(0,x) dx& = 0
\end{align}
\end{defn}
The main (abstract) result of this work is the following theorem which gives the existence of weak solutions of the evolution equation \eqref{eq main PDE}. We do not deal with uniqueness here, but in principle, it can be obtained via displacement convexity arguments and an exponential in time contraction on the $W_2$ distance between two solutions started from different initial data, cf. \cite{laborde2016some}. 
\begin{theorem}
\label{Theorem Main}
Let $\rho_0\in \cP^r_2(\bR^d)$ satisfy $\cF(\rho_0)<\infty$. Let $h>0$, $N\in\bN$ with $hN=T$, and let $\{\rho^{n}_{h}\}_{n=0}^{N},\{\tilde{\rho}^{n}_{h}\}_{n=0}^{N}$ be the solution of the scheme \eqref{eq scheme push forward}-\eqref{eq the JKO step}. Define the piecewise constant interpolations $\rho_{h},\tilde{\rho}_h$ by \eqref{eq interpolation 1}-\eqref{eq interpolation 2} and the interpolation $\rho^\dag_h$ by \eqref{eq interpolation 3}. 
Suppose that Assumption \ref{assumption : main assumptionso on drift and diffusion matrix} holds. Then  
\begin{enumerate}[label=(\roman*)]
   \color{blue}
    \item  for each $t\in[0,T]$ as $h \rightarrow 0$ ($N \to \infty$ abiding by $hN=T$) we have
    
\begin{equation}
    \label{eq: convergence}
    \rho_{h}(t,\cdot),\tilde{\rho}_h(t,\cdot),\rho^\dag_h(t,\cdot)
  \underset{h\to 0}{\longrightarrow} \rho(t) \quad\text{in}\quad L^1\big(\bR^d \big).
\end{equation}
\color{black}
\item Moreover,  there exists a map  $[0,T]\ni t \mapsto \rho(t,\cdot)$ in $\cP_2^r(\bR^d)$ such that 
\begin{equation}
    \label{eq: wasserstein sup convergence}
   \underset{h\to 0}{\lim} \sup_{t\in[0,T]} \max \Big\{ W_2( \rho_{h}(t,\cdot),\rho(t,\cdot)),  W_2(\tilde{\rho}_h(t,\cdot),\rho(t,\cdot)), W_2(\rho^\dag_h(t,\cdot),\rho(t,\cdot)) \Big\}=0.
\end{equation}
\end{enumerate}
The $\rho$ maps appearing in the above limits are weak solutions of the evolution equation \eqref{eq main PDE} in the sense of Definition \ref{def: weak formulation general PDE}.
\end{theorem}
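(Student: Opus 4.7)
The plan is to follow the classical JKO/AGS roadmap, adapted to the two-phase scheme: first derive uniform a priori bounds on second moments, entropy, and cumulative transport cost; second, upgrade these to a discrete Hölder estimate on $\{\rho_h\}$ in $W_2$ in order to extract, via a refined Ascoli--Arzel\`a argument, a narrowly continuous limit curve $\rho:[0,T]\to\cP_2^r(\bR^d)$; third, identify $\rho$ as a weak solution of \eqref{eq main PDE} by passing to the limit in a discrete weak formulation assembled from the push-forward step and the Euler--Lagrange equation for the JKO step.

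\textbf{A priori bounds.} First I would exploit the two phases separately. The conservative phase preserves entropy (since $b[\rho_h^n]$ is divergence-free, the Hamiltonian flow preserves Lebesgue measure) and changes $F$ by an amount controlled via the Lipschitz bound on $f$ and the linear-growth bound on $b$; a Gronwall argument on the push-forward controls $M(\tilde\rho_h^{n+1})$ in terms of $M(\rho_h^n)$. The JKO step yields the standard variational inequality
\begin{equation*}
\frac{1}{2h}W_{c_h}(\tilde\rho_h^{n+1},\rho_h^{n+1}) + \cF(\rho_h^{n+1}) \leq \cF(\tilde\rho_h^{n+1}),
\end{equation*}
and using the two-sided spectral bound $\frac{1}{\|A\|+h}\|x-y\|^2 \leq c_h(x,y) \leq \frac{1}{h}\|x-y\|^2$ together with the inequality $W_2^2(\rho_h^{n+1},\tilde\rho_h^{n+1}) \leq (\|A\|+h)\,W_{c_h}(\tilde\rho_h^{n+1},\rho_h^{n+1})$, summing over $n$ produces $\sum_n W_2^2(\tilde\rho_h^{n+1},\rho_h^{n+1}) \leq C h$. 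Combined with the transport-step estimate $W_2(\rho_h^n,\tilde\rho_h^{n+1})\leq Ch(1+M(\rho_h^n)^{1/2})$, I get uniform bounds on $\sup_n M(\rho_h^n)$ and $\sup_n H(\rho_h^n)$, where $H_-$ is controlled by the second moment in the usual way.

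\textbf{Compactness and $L^1$ convergence.} The cumulative $W_2^2$ bound above yields a discrete $1/2$-H\"older estimate $W_2(\rho_h(t),\rho_h(s)) \leq C(|t-s|^{1/2}+h^{1/2})$ for the interpolant \eqref{eq interpolation 1}, and similarly for $\tilde\rho_h$ and $\rho^\dag_h$; the three interpolants have $W_2$-distance vanishing with $h$ pairwise. A refined Ascoli--Arzel\`a theorem on $(\cP_2,W_2)$ produces a subsequence and a continuous limit curve $\rho$ satisfying \eqref{eq: wasserstein sup convergence}. The uniform entropy bound gives equi-integrability of $\{\rho_h(t,\cdot)\}$, and combined with the $W_2$-convergence (hence narrow convergence and moment convergence) I upgrade to $L^1$ convergence in \eqref{eq: convergence} by a standard argument (Dunford--Pettis plus narrow convergence).

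\textbf{Identifying the weak solution.} For a test function $\varphi\in C_c^\infty([0,T]\times\bR^d)$, I split the discrete time derivative $\langle\varphi(t_{n+1},\cdot),\rho_h^{n+1}\rangle - \langle\varphi(t_n,\cdot),\rho_h^n\rangle$ into a transport contribution (using that $\rho^\dag_h$ solves \eqref{eq 5} exactly, producing a $b[\rho_h^n]\cdot\nabla\varphi$ term) and a JKO contribution. For the latter, perturbing $\rho_h^{n+1}$ by the flow of a smooth vector field $\xi$ and comparing with the minimizer gives the discrete Euler--Lagrange identity
\begin{equation*}
\frac{1}{h}\int\langle A_h^{-1}(y-x),\xi(y)\rangle\,d\tilde\gamma_h^{n+1,c}(x,y) + \int\big(\nabla f \cdot \xi - \mathrm{div}\,\xi\big)\rho_h^{n+1} = 0,
\end{equation*}
which, after choosing $\xi = A_h\nabla\varphi$ and integrating by parts, yields a term approximating $\int\big(A_h \nabla f\cdot\nabla\varphi + \mathrm{div}(A_h\nabla\varphi)\big)\rho_h^{n+1}$. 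Since $A_h \to A$ as $h\to 0$ (with error $O(h)$ absorbable by the uniform bounds), summing over $n$ and sending $h\to 0$ recovers the weak formulation of Definition~\ref{def: weak formulation general PDE}. The non-local term $b[\rho_h^n]\cdot\nabla\varphi$ passes to the limit using Assumption \ref{assumption : main assumptionso on drift and diffusion matrix}, specifically the $W_2$-Lipschitz estimate \eqref{eq assump 3} combined with the $W_2$-convergence $\rho_h \to \rho$, which handles both the replacement of $\rho_h^n$ by the argument of $b$ and the convergence of the densities it is integrated against.

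\textbf{Main obstacle.} The most delicate point is the substitution $\xi = A_h\nabla\varphi$ and the resulting need to show that $A_h^{-1}(y-x)$, when paired against $A_h\nabla\varphi(y) \approx A_h\nabla\varphi(x)$ on the optimal plan $\tilde\gamma_h^{n+1,c}$, produces the correct limit involving $A$ rather than $A_h$, and that the error $(A_h-A)\nabla\varphi = h\nabla\varphi$ contributes only $O(h)$ summed up. This is where the perturbation $A_h = A+hI$ plays its essential role: it makes $c_h$ a genuine metric cost (needed for existence and the Euler--Lagrange derivation) while perturbing the limit equation only by a vanishing amount. Controlling second-order remainders in the Taylor expansion of $\nabla\varphi$ against the $W_{c_h}$-mass, and proving that the $A_h^{-1}\otimes A_h$ cancellation is clean enough to yield $\mathrm{div}(A\nabla\varphi)\,\rho$ in the limit, will be the most technical part of the argument.
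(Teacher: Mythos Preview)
Your proposal is correct and follows essentially the same roadmap as the paper: a priori bounds via the JKO variational inequality and entropy preservation under the conservative flow, compactness via a discrete H\"older estimate and the refined Arzel\`a--Ascoli theorem on $(\cP_2,W_2)$, and limit identification via the Euler--Lagrange equation with the test vector field $\xi = A_h\nabla\varphi$. Your concern about the ``$A_h^{-1}\otimes A_h$ cancellation'' is slightly overstated---since $\nabla_y c_h(x,y)=2A_h^{-1}(y-x)$, the pairing $\langle A_h\nabla\varphi(y),\,A_h^{-1}(y-x)\rangle=\langle\nabla\varphi(y),\,y-x\rangle$ is an exact algebraic identity, and the only $A_h$ versus $A$ discrepancy survives in the $\delta\cF(\rho_h^{n+1},A_h\nabla\varphi)$ term, where it contributes $O(h)$ exactly as you anticipated.
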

\textcolor{blue}{
Note that the convergence \eqref{eq: convergence} is stronger than weak $L^1((0,T)\times \bR^d)$ convergence.} The proof of the above theorem is given in Section \ref{section proof of the main result}.

\begin{rem}\label{remark nonlinear}
If one were to instead consider the  evolution equation, for a non-linear function $P$,
\begin{equation*}
    \partial_t \rho + \divv ( \rho b[\rho])  =   \divv \Big(A\big(\nabla P( \rho)+\rho\nabla f \big)\Big),
\end{equation*}
then following the strategy in \cite{carlier2017splitting}, to deal with the non-linear term, we expect one could construct a similar scheme to the one detailed above by adjusting the free energy functional $\cF$. We leave this for now to not over complicate the presentation.
\end{rem}

\section{Proof of the Main Result}\label{section proof of the main result}
The objective of this section is to prove the main result, Theorem \ref{Theorem Main}. Once a suitable optimal transport cost functional has been identified, the proof of the convergence of the discrete variational approximation scheme to a weak solution of the evolutionary equation is now a well-established procedure following the celebrated strategy of \cite{jordan1998variational}: firstly we prove the well-posedness of the scheme, then we derive discrete Euler-Lagrange equations for the minimisers of \eqref{eq the JKO step} and  necessary a priori estimates, and finally we prove the convergence of the scheme to a weak solution of \eqref{eq main PDE}. An additional step in our proof for the constructed operator-splitting scheme is to combine the two (conservative and dissipative) phases together. Since the outcome of the conservative phase $\tilde{\rho}^{n+1}_h$ becomes an input of the dissipative phase, we need to show that the second moments, the free-energy functionals and the distances involved, with respect to this density are controllable. This is where we make use of the divergence-free property and assumptions of the non-local vector field $b$. To this end, our proof follows the methods in \cites{duong2014conservative, carlier2017splitting} and we will omit similar computations.

Recall from Section \ref{sec the scheme} the definitions of the sequences $\rho^n_h,\tilde{\rho}_h^n$, interpolations $\rho_h,\tilde{\rho}_h,\rho^\dag_h$, and optimal couplings $\tilde{\gamma}^{n,c}_h$, $\tilde{\gamma}^n_h$, $\gamma^n_h$. Also recall that the constant $C\geq 0$ that appears will be independent of $h$ and $n\in \{0,\ldots N\}$, but may depend on the final time $T$. The following results hold under the assumptions of Theorem \ref{Theorem Main}, and for all $0<h<1$, note that we are ultimately interested in the case where $h \to 0$.

\subsection{Preliminary results and well-posedness}
\label{section well-posed premliminary results}

The main result here is that the scheme proposed in Section \ref{sec the scheme} is well-posed, this is shown using the direct method of calculus of variations with respect to the weak topology. 

We also make some preliminary observations on the matrix $A_h$, and on solutions to the continuity equation which will be useful later on.

\subsubsection{The transport equation}

By our assumptions on $b$, we can use DiPerna-Lions theory \cite{diperna1989ordinary} to conclude that there exists a solution to the ODE \eqref{eq definition of the flow}, which when pushing forward the initial density solves the continuity equation \eqref{eq 5}. 
Moreover, we note that the conservative dynamics preserves the entropy.
\begin{lemma}\label{lemma 12}
 Let $\rho^n_h \in \cP_2^r(\bR^d)$. Then the following results hold for any $n\in\{0,\ldots, N-1\}$.
\begin{enumerate}[label=(\roman*)]
    \item \label{item: 1} There exists a unique  $X_h^n:\bR_+\times \bR^d \to \bR^d$, such that $X_h^n(0,\cdot)=\text{id}$, and for a.e.~$x\in \bR^d$ the map $t\mapsto X_h^n(t,x)$ solves \eqref{eq definition of the flow},
        \begin{equation*}
        X_h^n(t,x)=x+\int_0^t b[\rho^n_h]\circ X^n_h(s,x)ds.
    \end{equation*}
    Moreover, $\bR^d \ni x \mapsto X(\cdot,x) \in L^1_{\text{loc}}(\bR^d;C(\bR))$, and for a.e.~$x\in \bR^d$ the map $\bR^+ \ni t \mapsto X_h^n(t,x) \in C^1(\bR)$.
    
    \item \label{item: 2}  For $t\in [t_n,t_{n+1})$, $\rho^\dag_h(t,\cdot)$ solves the continuity equation \eqref{eq 5} over the interval $[0,h)$.
    
    \item \label{item: 3} We have the following entropy preservation identities 
\begin{equation}\label{eq 14}
    H\big( \rho^{\dag}_h(t,\cdot) \big) = H(\rho^n_h)~ \forall t \in [t_n,t_{n+1}), \qquad  H(\tilde{\rho}^{n+1}_h)=H(\rho^n_h).
\end{equation}

    
\end{enumerate}

\begin{proof}
Since $b[\rho^n_h]$ satisfies Assumption \ref{assumption : main assumptionso on drift and diffusion matrix}, \ref{item: 1} and \ref{item: 2} follow by \cite{diperna1989ordinary}*{Theorem III.1}. 
In regard to \ref{item: 3}, note that for all $t\geq 0$ the map  $X_h^n(t,\cdot)$ preserves the Lebesgue measure since $b$ is a divergence free vector field. The result is thus immediate.
\end{proof}
\end{lemma}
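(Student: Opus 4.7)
The plan is to exploit the three hypotheses on $b[\rho_h^n]$ from Assumption \ref{assumption : main assumptionso on drift and diffusion matrix}, namely linear growth, $W^{1,1}_{\mathrm{loc}}$ regularity in the spatial variable, and zero divergence, and then to invoke DiPerna--Lions theory~\cite{diperna1989ordinary} essentially off the shelf. Since $\rho_h^n$ is fixed (it is a frozen input to the $n$-th transport step), the vector field $b[\rho_h^n]:\bR^d\to\bR^d$ is autonomous, so the assumptions of \cite{diperna1989ordinary}*{Theorem III.1} are satisfied verbatim. This immediately yields parts~\ref{item: 1} and~\ref{item: 2}: the existence of a unique regular Lagrangian flow $X_h^n$ with the stated regularity $\bR^d\ni x\mapsto X_h^n(\cdot,x)\in L^1_{\mathrm{loc}}(\bR^d;C(\bR))$ and a.e.\ $C^1$-in-time; and, by pushing $\rho^n_h$ forward along $X_h^n(t,\cdot)$, a distributional solution of the continuity equation~\eqref{eq 5} on $[0,h)$.

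For part~\ref{item: 3}, the key observation is that a divergence-free field generates a volume-preserving flow: the Jacobian $\det DX_h^n(t,\cdot)\equiv 1$ for all $t$. First I would justify this rigorously in the DiPerna--Lions setting using the renormalized-solution interpretation of $\partial_t u+b\cdot\nabla u=0$ applied to the Jacobian, or equivalently by recalling the transport identity
\[
\frac{d}{dt}\det DX_h^n(t,\cdot)=\bigl(\operatorname{div} b[\rho^n_h]\bigr)\circ X_h^n(t,\cdot)\,\det DX_h^n(t,\cdot)=0.
\]
Then, writing the density of $\rho^\dag_h(t,\cdot)=(X_h^n(t,\cdot))_{\#}\rho^n_h$ via the change-of-variables formula as
\[
\rho^\dag_h(t,y)=\frac{\rho^n_h\bigl(X_h^n(t,\cdot)^{-1}(y)\bigr)}{\det DX_h^n\bigl(t,X_h^n(t,\cdot)^{-1}(y)\bigr)}=\rho^n_h\bigl(X_h^n(t,\cdot)^{-1}(y)\bigr),
\]
and making the substitution $y=X_h^n(t,x)$, I would obtain
\[
H(\rho^\dag_h(t,\cdot))=\int_{\bR^d}\rho^n_h(x)\log\rho^n_h(x)\,dx=H(\rho^n_h),
\]
and since $\tilde{\rho}^{n+1}_h=\rho^\dag_h(h,\cdot)$ by definition, the second identity in~\eqref{eq 14} follows at once.

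The only genuine subtlety, and the step I would write out most carefully, is the volume-preservation claim, because the flow $X_h^n$ is only a DiPerna--Lions regular Lagrangian flow, not classical. In that setting the incompressibility must be extracted from the renormalization property applied to the characteristic-function approach, or equivalently quoted from the Ambrosio/DiPerna--Lions results stating that for $W^{1,1}_{\mathrm{loc}}$ divergence-free fields with linear growth the associated flow preserves the Lebesgue measure. After that, parts~\ref{item: 1}--\ref{item: 3} assemble directly, and the change-of-variables step is routine since $\rho_h^n\log\rho_h^n\in L^1(\bR^d)$ by assumption on the initial data and the inductive hypothesis.
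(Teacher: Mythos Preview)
Your proposal is correct and follows essentially the same approach as the paper: both invoke DiPerna--Lions theory (specifically \cite{diperna1989ordinary}*{Theorem III.1}) for parts~\ref{item: 1} and~\ref{item: 2}, and both derive part~\ref{item: 3} from the fact that a divergence-free vector field generates a Lebesgue-measure-preserving flow. Your write-up is in fact more detailed than the paper's, which simply asserts measure preservation and calls the entropy identity ``immediate''; your explicit change-of-variables computation and your flagging of the subtlety about volume preservation for non-classical regular Lagrangian flows are welcome elaborations rather than deviations.
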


The following lemma bounds the change of the distribution under the Hamiltonian dynamics over the interval $(0,h)$ by its 2nd moments. 
\begin{lemma}\label{lemma 7} The following result holds for any $n\in\{0,\ldots, N-1\}$. Let $\rho^n_h \in \cP_2^r(\bR^d)$. Let $\mu$ be the solution of \eqref{eq 5} over the interval $[0,h]$ and let $0 \leq s_1 \leq s_2 \leq h$. 
Then
\begin{equation}\label{eq 12}
    W_2^2(\mu(s_1,\cdot),\mu(s_2,\cdot)\big) \leq C h \int_{s_{1}}^{s_{2}}  \big( 1+ M(\mu(s,\cdot)) \big)ds.
\end{equation}

Moreover, for any $t\in[t_n,t_{n+1})$,  $M(\rho^\dag_h(t,\cdot)),M(\tilde{\rho}_h(t,\cdot))< C\big(M(\rho^n_h)+1\big)$. 

\begin{proof}
Let $\mu$ solve \eqref{eq 5}. For any $ 0 \leq s_1\leq s_2 \leq h$, from Benamou-Brenier formula \cite{ambrosio2008gradient}*{Chapter 8} and  \eqref{eq assump 4},  we have 
\begin{align*}
    W_2^2\big( \mu(s_1,\cdot),\mu(s_2,\cdot) \big)\leq& (s_2-s_1) \int_{s_{1}}^{s_{2}} \int_{\bR^d} \| b[\rho^n_h](x) \|^2 \mu(s,x) dxds
    \\
    \leq& (s_2-s_1) C \int_{s_{1}}^{s_{2}} \int_{\bR^d} \big(1+\|x\|^2\big) \mu(s,x) dxds
    \leq h C \int_{s_{1}}^{s_{2}}  \big(1+M(\mu(s,\cdot))\big) ds,
\end{align*}
which is \eqref{eq 12}. Now consider
\begin{align*}
    \partial_t M(\mu(t,\cdot)) 
     =
     \partial_t \int_{\bR^d} \|  X^n_h(t,x)\|^2\rho^n_h(x)dx
     =& 2 \int_{\bR^d} X^n_h(t,x) \cdot \partial_t X^n_h(t,x)   \rho^n_h(x) dx
    \\
    =& 2 \int_{\bR^d} X^n_h(t,x) \cdot b[\rho^n_h] \circ X^n_h(t,x)   \rho^n_h(x) dx 
    \\
    \leq& C\int_{\bR^d}  \big(1+ \| X^n_h(t,x)\|^2 \big)  \rho^n_h(x) dx
    \\
    =& C\int_{\bR^d}  \big(1+ \|x\|^2 \big)  \big(X^n_h(t,\cdot)\big)_{\#}\rho^n_h(x) dx 
    =
    C\big(1+M(\mu(t,\cdot))\big).
\end{align*}

Employing Gr\"onwall's inequality, we have \textcolor{black}{for any $t\in[0,h]$ (recalling that throughout this article $h\leq T$)}
\begin{align}\label{eq 94}
    M(\mu(t,\cdot))
    \leq C\big(  M(\mu(0,\cdot))+1 \big)
       = C\big(M(\rho^n_h)+1\big).
\end{align}
For $t\in [t_{n},t_{n+1})$, recall  $\rho_h^\dag(t,\cdot)$ is equal to the solution of \eqref{eq 5} over the interval $[0,h)$. Hence for any $t\in [t_{n},t_{n+1})$, by \eqref{eq 94}, 
\begin{align*}
    M(\rho^\dag_h(t,\cdot))\leq C(M(\rho^n_h)+1).
\end{align*}
Moreover, for all $t\in[t_{n},t_{n+1})$ we have  $\tilde{\rho}_h(t,\cdot)=\tilde{\rho}^{n+1}_h=\mu(h,\cdot)$, where again $\mu$ solves \eqref{eq 5}, and hence by \eqref{eq 94}
\begin{align*}
    M ( \tilde{\rho}_h(t,\cdot) )
    =M(\tilde{\rho}^{n+1}_h)
    = M(\mu(h,\cdot))
    \leq C(M(\rho^n_h)+1),
\end{align*}
for any $t\in [t_{n},t_{n+1})$. This completes the proof.
\end{proof}
\end{lemma}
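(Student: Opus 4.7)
The plan is to prove the two assertions in parallel: the Wasserstein estimate comes from the Benamou--Brenier representation, while the second-moment control comes from differentiating $M$ along the Hamiltonian flow and invoking Grönwall.

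First, for the Wasserstein bound, I would recall that since $\mu$ solves the continuity equation \eqref{eq 5} over $[0,h]$ with velocity field $b[\rho^n_h]$, the Benamou--Brenier formula yields, for $0\le s_1\le s_2\le h$,
$$W_2^2(\mu(s_1,\cdot),\mu(s_2,\cdot))\;\le\;(s_2-s_1)\int_{s_1}^{s_2}\!\int_{\bR^d}\|b[\rho^n_h](x)\|^2\,\mu(s,x)\,dx\,ds.$$
Then the linear growth assumption $\|b[\rho^n_h](x)\|\le C(1+\|x\|)$ from \eqref{eq assump 4} turns the inner integral into a multiple of $1+M(\mu(s,\cdot))$. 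Using $s_2-s_1\le h$ as the prefactor gives \eqref{eq 12}.

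Second, for the moment bound, I would use that $\mu(t,\cdot)=(X^n_h(t,\cdot))_{\#}\rho^n_h$, so by change of variables $M(\mu(t,\cdot))=\int\|X^n_h(t,x)\|^2\rho^n_h(dx)$. Differentiating in $t$ and substituting the ODE \eqref{eq definition of the flow} yields
$$\partial_t M(\mu(t,\cdot))=2\int X^n_h(t,x)\cdot b[\rho^n_h](X^n_h(t,x))\,\rho^n_h(dx).$$
Applying Cauchy--Schwarz together with the linear growth of $b$, then pushing the integration back via $X^n_h(t,\cdot)_{\#}\rho^n_h=\mu(t,\cdot)$, gives $\partial_t M(\mu(t,\cdot))\le C(1+M(\mu(t,\cdot)))$. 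A Grönwall argument on $[0,t]\subset[0,h]\subset[0,T]$ then yields $M(\mu(t,\cdot))\le C(1+M(\rho^n_h))$, where crucially $C$ depends only on $T$ and the growth constant of $b$, not on $n$ or $h$.

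To conclude, I would transfer the estimate to the two interpolants: by definition $\rho^\dag_h(t,\cdot)=\mu(t-t_n,\cdot)$ for $t\in[t_n,t_{n+1})$, so its moment bound is immediate; and $\tilde{\rho}_h(t,\cdot)=\tilde{\rho}^{n+1}_h=\mu(h,\cdot)$, handled by the same inequality at $t=h$. The main (mild) obstacle is ensuring the constant stays uniform in $n$ and $h$: this is where confining the Grönwall argument to the interval $[0,h]\subset[0,T]$ and using only the linear growth bound on $b$ (rather than anything $n$-dependent) matters. No other subtleties are expected, since the divergence-free assumption is not needed here — it is only used to preserve entropy in Lemma~\ref{lemma 12}.
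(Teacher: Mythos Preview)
Your proposal is correct and follows essentially the same approach as the paper: Benamou--Brenier plus the linear growth of $b$ for \eqref{eq 12}, then differentiation of $M(\mu(t,\cdot))$ along the flow combined with Gr\"onwall for the moment bound, and finally the direct transfer to $\rho^\dag_h$ and $\tilde{\rho}_h$ via their definitions. Your remarks on uniformity of the constant and on the irrelevance of the divergence-free condition here are accurate and match the paper's reasoning.
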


\subsubsection{The optimal transportation problem}

In this section we discuss the well-posedness of the minimization problem \eqref{eq the JKO step}. 
It is natural to achieve well-posedness of the scheme through finiteness, lower semi-continuity, and convexity of the functionals which appear in it.  First observe that $A_h$ is indeed positive.

\begin{lemma}[The cost function]\label{lemma the cost function i.e matrix A}
The matrix $A_h$ defined in \eqref{eq the matrix A} is positive definite (i.e., invertible) which implies, 
\begin{equation}\label{eq 2}
    \|x-y\|^2\leq C  c_h(x,y),~~\forall x,y \in \bR^d.
\end{equation}

\begin{proof} This is well-known \cite{adams2021entropic}*{Appendix B.1} .

\end{proof}

\end{lemma}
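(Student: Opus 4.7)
The plan is to observe that $A_h = A + hI$ inherits positive definiteness from a purely spectral argument, and then to translate the resulting lower bound on the smallest eigenvalue of $A_h^{-1}$ into the desired estimate $\|x-y\|^2 \le C c_h(x,y)$.

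\textbf{Positive definiteness of $A_h$.} For any $v\in \mathbb{R}^d\setminus\{0\}$, I would write
\begin{equation*}
\langle A_h v, v\rangle = \langle A v, v\rangle + h\|v\|^2 \ge h\|v\|^2 > 0,
\end{equation*}
using only that $A$ is symmetric positive semi-definite (so $\langle Av,v\rangle \ge 0$) and $h>0$. Hence $A_h$ is symmetric positive definite, in particular invertible, with $A_h^{-1}$ also symmetric positive definite.

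\textbf{The lower bound on $c_h$.} Since $A$ is symmetric positive semi-definite, there exist an orthogonal matrix $Q$ and a diagonal matrix $\Lambda=\mathrm{diag}(\lambda_1,\ldots,\lambda_d)$ with $\lambda_i\ge 0$ such that $A = Q^T \Lambda Q$. Then $A_h = Q^T(\Lambda + hI)Q$ and $A_h^{-1} = Q^T(\Lambda + hI)^{-1} Q$, whose eigenvalues are $(\lambda_i + h)^{-1}$. Setting $z = x-y$ and $w = Qz$, one has
\begin{equation*}
c_h(x,y) = \langle A_h^{-1}z, z\rangle = \sum_{i=1}^d \frac{w_i^2}{\lambda_i + h} \ge \frac{\|w\|^2}{\lambda_{\max}(A) + h} = \frac{\|z\|^2}{\lambda_{\max}(A) + h}.
\end{equation*}

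\textbf{Conclusion.} Since we work in the regime $0 < h < 1$ (as stated at the beginning of Section \ref{section proof of the main result}), one has $\lambda_{\max}(A)+h \le \lambda_{\max}(A)+1 =: C$, which is a constant depending only on $A$ (and thus independent of $h$ and $n$, consistent with the convention for $C$ fixed in Section \ref{sec: notation}). Rearranging yields $\|x-y\|^2 \le C\, c_h(x,y)$ for all $x,y\in \mathbb{R}^d$, as required. No step here presents a genuine obstacle; the only subtlety worth flagging is ensuring the constant $C$ does not depend on $h$, which is precisely why we restrict to $h<1$ rather than allowing $h$ to absorb into the bound.
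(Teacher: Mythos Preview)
Your argument is correct and complete. The paper itself does not give a proof at all: it simply cites \cite{adams2021entropic}*{Appendix B.1} as ``well-known''. Your spectral diagonalisation is exactly the standard argument one would expect behind that citation, and you have correctly handled the only delicate point, namely that the constant $C=\lambda_{\max}(A)+1$ is independent of $h$ thanks to the standing restriction $0<h<1$.
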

The next result addresses the existence of a unique minimiser to \eqref{eq the JKO step}. This type of result is classical, for completeness the details of the proof can be found in Appendix \ref{sec:appendix:wellposedness}. 
\begin{prop}
\label{lemma : well-posedness of jko scheme} 
Let $\mu \in \cP^r_2(\bR^d)$ with $\mathcal{F}(\mu)<\infty$. Then, there exists a unique $\nu^* \in \cP^r_2(\bR^d)$ such that 
\begin{equation}\label{eq the minimiser}
   \nu^*  = \underset{\nu \in \cP^r_2(\bR^d)}{\text{argmin}} \Big\{ \frac{1}{2h}W_{c_{h}}(\mu,\nu)+ \mathcal{F}(\nu) \Big\}.
\end{equation}
\end{prop}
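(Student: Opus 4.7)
The plan is to apply the direct method of the calculus of variations to the functional
\[
J(\nu):=\frac{1}{2h}W_{c_h}(\mu,\nu)+\mathcal{F}(\nu),\qquad \nu\in\cP_2^r(\bR^d),
\]
combined with a strict convexity argument for uniqueness. Since $\mathcal{F}(\mu)<\infty$, the choice $\nu=\mu$ gives $J(\mu)<\infty$, so $\inf J<\infty$, and it suffices to find a minimizing sequence that converges (along a subsequence) in a topology for which $J$ is lower semicontinuous, with a limit that is also absolutely continuous.

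First, I would show $J$ is bounded below. Since $f\ge 0$, $F(\nu)\ge 0$, and $W_{c_h}(\mu,\nu)\ge 0$, the only potentially negative term is $H(\nu)$. The negative part $H_-(\nu)$ is controlled by the second moment via the standard estimate $H_-(\nu)\le C(1+M(\nu))$ (compare $\rho\log\rho$ against a Gaussian on the region where $\rho<1$). In turn, the second moment of $\nu$ is controlled by $W_{c_h}(\mu,\nu)$: by Lemma~\ref{lemma the cost function i.e matrix A}, $\|x-y\|^2\le C\,c_h(x,y)$, so $W_2^2(\mu,\nu)\le C\,W_{c_h}(\mu,\nu)$, and then $M(\nu)\le 2M(\mu)+2W_2^2(\mu,\nu)$. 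Combining these inequalities produces
\[
J(\nu)\;\ge\;\tfrac{1}{4h}W_{c_h}(\mu,\nu)-C(1+M(\mu)),
\]
which simultaneously bounds $J$ below and shows that any minimizing sequence $\{\nu_k\}$ has uniformly bounded $W_{c_h}(\mu,\nu_k)$, hence uniformly bounded second moments.

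Next I extract a limit. Uniform bounds on second moments imply tightness, so by Prokhorov's theorem a subsequence (not relabeled) converges narrowly, $\nu_k\rightharpoonup \nu^*\in\cP(\bR^d)$, and the second-moment bound is preserved in the limit by Fatou, so $\nu^*\in\cP_2(\bR^d)$. I would then pass to the limit in each piece of $J$: the transport cost $\nu\mapsto W_{c_h}(\mu,\nu)$ is lower semicontinuous with respect to narrow convergence (standard in optimal transport, via continuity of the quadratic form $c_h$ and Kantorovich duality); $F(\nu)=\int f\,d\nu$ is lower semicontinuous because $f$ is continuous and non-negative; the entropy $H$ is narrowly lower semicontinuous on $\cP_2(\bR^d)$ and takes the value $+\infty$ on non-absolutely-continuous measures. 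Since $\liminf_k H(\nu_k)\le\liminf_k J(\nu_k)+C<\infty$, lower semicontinuity forces $H(\nu^*)<\infty$ and hence $\nu^*\in\cP_2^r(\bR^d)$. Combining the three l.s.c.\ statements gives $J(\nu^*)\le \liminf_k J(\nu_k)=\inf J$, so $\nu^*$ is a minimizer.

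For uniqueness I would use strict convexity along linear interpolations $\nu_\theta=(1-\theta)\nu_0+\theta\nu_1$, $\theta\in[0,1]$. The potential term $F$ is linear in $\nu$; the entropy $H$ is strictly convex on $\cP_2^r(\bR^d)$ because $s\mapsto s\log s$ is strictly convex; and $\nu\mapsto W_{c_h}(\mu,\nu)$ is convex (the set of couplings is convex and the cost is linear in the plan). Strict convexity of $H$ therefore makes $J$ strictly convex on the convex set $\cP_2^r(\bR^d)$, ruling out two distinct minimizers. The main subtlety I expect is the coercivity/lower-bound step, where one must weave together the control of $H_-$ by $M$ and the control of $M$ by $W_{c_h}(\mu,\cdot)$ using the degeneracy-compatible inequality $\|x-y\|^2\le C\,c_h(x,y)$; everything else is standard JKO machinery, which is why the authors defer it to the appendix.
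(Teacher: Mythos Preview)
Your proposal is correct and follows essentially the same route as the paper's proof: direct method of the calculus of variations, coercivity via the chain $H_-\le C(1+M)$ and $M(\nu)\le C(W_{c_h}(\mu,\nu)+M(\mu))$, tightness from second-moment bounds, lower semicontinuity of each piece of $J$, and uniqueness from strict convexity of the entropy along linear interpolations. The only cosmetic difference is that the paper invokes the sharper estimate $H_-(\nu)\le C(1+M(\nu))^\alpha$ with $\alpha<1$, which makes $C_1 M(\nu)-C_2(1+M(\nu))^\alpha$ coercive without any smallness condition on $h$; your linear bound $H_-(\nu)\le C(1+M(\nu))$ feeds the moment back into the $W_{c_h}$ term and yields $J(\nu)\ge(\tfrac{1}{2h}-C)W_{c_h}(\mu,\nu)-C(1+M(\mu))$, so the stated coefficient $\tfrac{1}{4h}$ only holds for $h$ below a fixed threshold---harmless in this paper's setting $h\to 0$, but worth stating explicitly.
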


\subsection{Discrete Euler-Lagrange Equations}\label{section E.L eq}
The following results are by now classical \cite{jordan1998variational}*{Proposition 4.1}, so we state them without proof. 
\begin{lemma}\label{lemma E.L eq 1}
Let $\eta \in C^\infty_c(\bR^d,\bR^d)$, and let $\Phi$ be the solution of the following ODE:
\begin{equation}\label{eq : the flow}
    \partial_s \Phi_s = \eta(\Phi_s),~\Phi_0=\text{id}.
\end{equation} 
Then for any $\nu \in \cP_2^r (\bR^d)$ we have 
\begin{equation}
\label{eq:aux:gradcalF}
    \delta \cF(\nu,\eta)
    :=\frac{d}{d s} \cF\big((\Phi_{s})_{\#}\nu\big)\Big|_{s=0}
    =\int_{\bR^d} \nu(y) \eta(y) \cdot  \nabla f(y)  dy-\int_{\bR^d} \nu(y) \divv (\eta (y)) dy.
\end{equation}

\end{lemma}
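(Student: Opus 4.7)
The plan is to write $\cF = F + H$ and differentiate each piece separately, using the definition of push-forward and the change of variables formula for densities. First, I would observe that since $\eta \in C^\infty_c(\bR^d,\bR^d)$, standard ODE theory gives a smooth solution $\Phi_s$ to \eqref{eq : the flow} defined for all $s \in \bR$, with $\Phi_s$ a $C^\infty$ diffeomorphism of $\bR^d$ equal to the identity outside a fixed compact set (independent of $s$ for $|s|$ bounded). In particular, $D\Phi_s(y) = I + s\, D\eta(y) + O(s^2)$ uniformly in $y$, so $\det D\Phi_s(y) > 0$ for $|s|$ small.

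For the potential energy, the definition of push-forward gives
\begin{equation*}
F((\Phi_s)_{\#}\nu) = \int_{\bR^d} f(\Phi_s(y))\,\nu(y)\,dy.
\end{equation*}
Since $f \in C^1(\bR^d)$ is Lipschitz (Assumption \ref{assumption : main assumptionso on drift and diffusion matrix}) and $\eta$ is bounded, the integrand and its $s$-derivative are dominated by $C(1+\|y\|)\nu(y)$, which is integrable thanks to $\nu \in \cP_2^r(\bR^d)$. Differentiating under the integral at $s=0$, using $\partial_s \Phi_s|_{s=0} = \eta(\Phi_0) = \eta(y)$, yields the first term $\int \nu(y)\eta(y)\cdot \nabla f(y)\,dy$.

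For the entropy, let $\nu_s$ denote the density of $(\Phi_s)_{\#}\nu$. The change-of-variable identity $\nu_s(\Phi_s(y))\det D\Phi_s(y) = \nu(y)$ combined with the substitution $x = \Phi_s(y)$ gives
\begin{equation*}
H((\Phi_s)_{\#}\nu) = \int_{\bR^d}\nu_s(\Phi_s(y))\log \nu_s(\Phi_s(y))\, \det D\Phi_s(y)\,dy = \int_{\bR^d} \nu(y)\log \nu(y)\,dy - \int_{\bR^d} \nu(y)\log \det D\Phi_s(y)\,dy.
\end{equation*}
The first term is independent of $s$. For the second, Jacobi's formula gives $\tfrac{d}{ds}\log \det D\Phi_s(y)\big|_{s=0} = \mathrm{tr}(D\eta(y)) = \divv \eta(y)$. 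Since $\eta$ has compact support, $\log \det D\Phi_s$ and its $s$-derivative vanish outside a fixed compact set and are uniformly bounded for $|s|$ small, so dominated convergence legitimately passes the derivative inside the integral (using only that $\nu$ is a probability measure, no integrability of $\nu \log \nu$ on the relevant set is needed beyond what is already assumed for $\cF(\nu)$ to make sense). This gives the second term $-\int \nu(y)\divv \eta(y)\,dy$.

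The main delicacy is the differentiation of the entropy term, because a priori $\nu \log \nu$ might not be dominated nicely along the flow; the compact support of $\eta$ (and hence of $\log \det D\Phi_s$) is precisely what rescues the argument and keeps the manipulation at the level of the smooth multiplicative factor $\det D\Phi_s$ rather than touching the (possibly singular) $\nu\log\nu$ directly. Summing the two contributions produces \eqref{eq:aux:gradcalF}.
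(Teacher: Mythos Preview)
Your proposal is correct and is precisely the classical computation from \cite{jordan1998variational}*{Proposition 4.1}; the paper does not give its own proof but simply cites that reference, and your argument reproduces it faithfully, including the key trick of rewriting $H((\Phi_s)_{\#}\nu)$ as $H(\nu) - \int \nu \log\det D\Phi_s$ so that only the smooth, compactly supported factor needs to be differentiated.
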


\begin{lemma}\label{lemma E.L eq 2}
 Let $\mu\in\cP_2^r(\bR^d)$. 
 Let $\nu$ be the optimal solution in \eqref{eq the minimiser}, and let $\gamma$ be the corresponding  optimal plan in $W_{c_h}(\mu,\nu)$. 
 Then, for any $\eta \in C_c^\infty(\bR^d,\bR^d)$ we have

\begin{equation}
  0=\frac{1}{2h}\int_{\bR^{2d}} \Big\langle \eta(y) , \nabla_y   c_h(x,y)  \Big\rangle  d\gamma(x,y) +\delta \cF(\nu,\eta).
\end{equation}
In particular, for any $\varphi \in C^\infty_c(\bR^d)$, by choosing $\eta(x)=A_h \nabla \varphi(x)$, and $\tilde{\gamma}^{n+1,c}_h$ defined in \eqref{eq optimal coupling 1}, we have

\begin{equation}
  0=\frac{1}{h}\int_{\bR^{2d}} \Big\langle y-x, \nabla\varphi(x)  \Big\rangle  d\tilde{\gamma}_h^{n+1,c}(x,y) 
  +\delta \cF(\rho^{n+1}_h,A_h \nabla \varphi).
  \label{eq euler lagrange}
\end{equation}

\end{lemma}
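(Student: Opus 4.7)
The plan is to apply the classical inner-variation technique: we perturb the JKO minimizer $\nu$ along the flow $\Phi_s$ generated by the test vector field $\eta$, use the optimality of $\nu$ to extract a first-order condition, and then unpack both summands.

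First I would push $\nu$ forward by $\Phi_s$ from Lemma \ref{lemma E.L eq 1}, setting $\nu_s := (\Phi_s)_{\#}\nu$. The map $(\mathrm{id}\times \Phi_s)$ transports $\gamma$ to a coupling $\gamma_s := (\mathrm{id}\times\Phi_s)_{\#}\gamma \in \Pi(\mu,\nu_s)$ (the first marginal is untouched), hence using $\gamma_s$ as a competitor in \eqref{eq : minimisation problem},
$$
W_{c_h}(\mu,\nu_s) \leq \int_{\bR^{2d}} c_h(x,\Phi_s(y))\,d\gamma(x,y) =: g(s),
$$
with equality at $s=0$ since $\gamma_0=\gamma$ is optimal. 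Setting $\tilde J(s) := \tfrac{1}{2h} g(s) + \cF(\nu_s)$ and $J(s) := \tfrac{1}{2h} W_{c_h}(\mu,\nu_s) + \cF(\nu_s)$, the optimality of $\nu$ in \eqref{eq the minimiser} gives $\tilde J(s)\geq J(s)\geq J(0)=\tilde J(0)$ for all small $s$. Thus $\tilde J$ attains a local minimum at $s=0$, so $\tilde J'(0)=0$ provided differentiability holds.

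Next I would compute the two derivatives. Lemma \ref{lemma E.L eq 1} supplies $\frac{d}{ds}\cF(\nu_s)\big|_{s=0} = \delta\cF(\nu,\eta)$ directly. For $g$, smoothness of $c_h$ combined with smoothness of $s\mapsto \Phi_s(y)$ (uniform in $y$, since $\eta\in C_c^\infty$ makes $\Phi_s$ globally defined with $\|\Phi_s(y)-y\|$ uniformly bounded by $s\|\eta\|_\infty$) lets me differentiate under the integral via dominated convergence; the linear growth of $\nabla_y c_h$ in $x-y$ is integrable against $\gamma\in\cP_2(\bR^{2d})$. This yields
$$
g'(0) \;=\; \int_{\bR^{2d}} \bigl\langle \eta(y),\, \nabla_y c_h(x,y)\bigr\rangle\, d\gamma(x,y),
$$
and adding gives the first stated Euler-Lagrange identity.

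For the specialization, substitute $\eta = A_h\nabla\varphi$, $\mu = \tilde\rho^{n+1}_h$, $\nu = \rho^{n+1}_h$, and $\gamma = \tilde\gamma^{n+1,c}_h$. Since $A_h$ is symmetric positive definite (Lemma \ref{lemma the cost function i.e matrix A}) and $c_h(x,y) = \langle A_h^{-1}(x-y), x-y\rangle$, one has $\nabla_y c_h(x,y) = 2 A_h^{-1}(y-x)$, whence $\langle A_h\nabla\varphi(y),\, 2 A_h^{-1}(y-x)\rangle = 2\langle \nabla\varphi(y),\, y-x\rangle$; dividing by $2h$ produces the displayed formula. The only mildly technical point in the argument is the differentiability of $g$ under the integral sign, but the compact support of $\eta$ and the finite second moment of $\gamma$ make the dominating function immediate; all remaining steps are linear algebra and a direct appeal to Lemma \ref{lemma E.L eq 1}.
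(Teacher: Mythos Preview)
Your argument is correct and is exactly the classical inner-variation derivation from \cite{jordan1998variational}*{Proposition 4.1}; the paper in fact does not give its own proof of this lemma, it simply cites JKO and states the result. One cosmetic remark: your computation correctly produces $\nabla\varphi(y)$ in the integrand of \eqref{eq euler lagrange} (since $y$ is the $\rho^{n+1}_h$-variable of $\tilde\gamma^{n+1,c}_h$), whereas the statement writes $\nabla\varphi(x)$ --- this is a typo in the paper, confirmed by the way the identity is actually used in the proof of Lemma~\ref{lemma 20}.
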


\subsection{A priori estimates}

In this section we establish a priori estimates which will allow us to prove the convergence of the scheme to a weak solution of \eqref{eq main PDE} in Section \ref{section convergence}. More precisely, we will show the uniformly boundedness of the second moments and of free energies of the minimization iterations \eqref{eq the JKO step}. These uniform bounds are preserved under the conservative dynamics, this is explained in the next lemma. 

\begin{lemma}\label{lemma 4}
Let $n\in \{0,1,\ldots,N-1\}$. If there exists a constant $C_1>0$, independent of $h$ and $n$, such that $M(\rho^n_h),\cF(\rho^n_h)<C_1$, then $\tilde{\rho}^{n+1}_h$ obtained from \eqref{eq scheme push forward} satisfies 
$$
M(\tilde{\rho}^{n+1}_h),\cF(\tilde{\rho}^{n+1}_h)<C.
$$
As usual, the constant $C$ appearing is also independent of $h$ and $n$, but will depend on $C_1$.
\begin{proof}
The bound for the moments clearly hold by Lemma \ref{lemma 7}. For the free energy functional, we have $ \cF(\tilde{\rho}^{n+1}_h)= F(\tilde{\rho}^{n+1}_h)+H(\rho^n_h)$ by the conservation of entropy in Lemma \ref{lemma 12}. Therefore, since $f$ is  Lipschitz 
\begin{align*}
    \cF(\tilde{\rho}^{n+1}_h)
    =\int f(x)\tilde{\rho}^{n+1}_h(x)dx +H(\rho^n_h)
    & \leq C\int \big(\|x\|+1\big)\tilde{\rho}^{n+1}_h(x)dx +H(\rho^n_h)
    \\
    & \leq C(M(\tilde{\rho}^{n+1}_h)+1) +H(\rho^n_h) 
    \leq C(M(\tilde{\rho}^{n+1}_h)+1) +\cF(\rho^n_h) \leq C.
\end{align*}
\end{proof}
\end{lemma}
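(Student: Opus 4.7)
The plan is to decompose $\cF(\tilde{\rho}^{n+1}_h) = F(\tilde{\rho}^{n+1}_h) + H(\tilde{\rho}^{n+1}_h)$ and treat each piece with one of the two preparatory lemmas already proved above. The second moment bound is essentially immediate: since $\tilde{\rho}^{n+1}_h$ equals $\mu(h,\cdot)$, where $\mu$ is the continuity-equation solution on $[0,h]$ with initial datum $\rho^n_h$, Lemma \ref{lemma 7} gives $M(\tilde{\rho}^{n+1}_h)\leq C(M(\rho^n_h)+1)\leq C(C_1+1)$, with $C$ independent of $h$ and $n$.

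For the entropy term I intend to invoke the preservation identity in Lemma \ref{lemma 12}\ref{item: 3}, which encodes the fact that the characteristic flow $X^n_h(t,\cdot)$ is volume-preserving because $b[\rho^n_h]$ is divergence-free. Thus $H(\tilde{\rho}^{n+1}_h)=H(\rho^n_h)$, and to turn this into a uniform upper bound I rewrite $H(\rho^n_h)=\cF(\rho^n_h)-F(\rho^n_h)\leq \cF(\rho^n_h)\leq C_1$, where the first inequality uses the non-negativity of $F$ inherited from $f\geq 0$ (Assumption \ref{assumption : main assumptionso on drift and diffusion matrix}). This is the one place where the sign conditions in the assumptions are important.

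For the potential term, the Lipschitz bound on $f$ combined with $f\geq 0$ yields the linear growth estimate $f(x)\leq f(0)+C\|x\|$, so
\begin{equation*}
F(\tilde{\rho}^{n+1}_h)=\int f(x)\,\tilde{\rho}^{n+1}_h(dx)\leq f(0)+C\int \|x\|\,\tilde{\rho}^{n+1}_h(dx)\leq C\bigl(1+M(\tilde{\rho}^{n+1}_h)\bigr),
\end{equation*}
after absorbing $\|x\|$ into $\tfrac12(1+\|x\|^2)$. Combining this with the already-established moment bound closes the argument: $\cF(\tilde{\rho}^{n+1}_h)\leq C(1+M(\tilde{\rho}^{n+1}_h))+H(\rho^n_h)\leq C$.

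Honestly, there is no real obstacle here once Lemma \ref{lemma 12} and Lemma \ref{lemma 7} are in hand; the content of the present lemma is precisely that the two quantitative bounds established for the dissipative (JKO) step transfer through a single conservative step without deterioration. The only genuine hypothesis that has to be kept in mind is $f\geq 0$, which is what allows one to discard $F(\rho^n_h)$ and isolate an upper bound on $H(\rho^n_h)$ from a bound on $\cF(\rho^n_h)$ alone; if $f$ were only bounded below, one would need an additional moment-based estimate to control the resulting lower-bound term for $F(\rho^n_h)$, but this is excluded by the standing assumptions.
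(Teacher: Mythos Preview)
Your proposal is correct and follows essentially the same approach as the paper: both invoke Lemma \ref{lemma 7} for the moment bound, Lemma \ref{lemma 12}\ref{item: 3} for entropy preservation, the Lipschitz property of $f$ to control $F(\tilde{\rho}^{n+1}_h)$ by moments, and the non-negativity of $f$ to extract $H(\rho^n_h)\leq \cF(\rho^n_h)$. Your write-up is in fact more explicit about the role of the hypothesis $f\geq 0$, which the paper uses implicitly in the step $H(\rho^n_h)\leq \cF(\rho^n_h)$.
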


The following lemma controls the sum of the optimal transport costs of the JKO steps, by using $\tilde{\rho}^{n+1}_h$ as a competitor to $\rho^{n+1}_h$ in \eqref{eq the JKO step}. This estimate is of a similar type to \cite{jordan1998variational}[Equation (46)], however, because of the splitting nature of our scheme, we don't use $\rho^n_h$ as a competitor in \eqref{eq the JKO step}- making the estimate more involved. 

\begin{lemma}\label{lemma a sum of transport costs}
For any $n \in \{1,\ldots, N-1\}$ it holds that 

\begin{align}\label{z12}
 \sum_{i=0}^{n-1} W_{c_{h}}(\tilde{\rho}^{i+1}_h,\rho^{i+1}_h) \leq & C h\Big(1  + \cF(\rho^0)+\big(M(\rho^n_h)+1\big)^\alpha \Big).      
 \end{align}
\begin{proof}
Let $n\in \{0,1,\ldots,N-1\}$. 
Since $\rho^{n+1}_h$ attains the infimum in \eqref{eq the JKO step} we can compare it against $\tilde{\rho}^{n+1}_h$. This gives
\begin{equation*}
    \frac{1}{2h}W_{c_{h}}(\tilde{\rho}^{n+1}_h,\rho^{n+1}_h)\leq \cF(\tilde{\rho}^{n+1}_h)-\cF(\rho^{n+1}_h).
\end{equation*}
Using Lemma \ref{lemma 12} for the entropy, the above is equivalent to
\begin{equation}\label{eq 1}
    \frac{1}{2h}W_{c_{h}}(\tilde{\rho}^{n+1}_h,\rho^{n+1}_h)\leq F(\tilde{\rho}^{n+1}_h)-F(\rho^{n+1}_h)+ H(\rho^n_h)-H(\rho^{n+1}_h).
\end{equation}
Recall now that $(c_h,\tilde{\gamma}^{n+1,c}_h)=W_{c_h}(\tilde{\rho}^{n+1}_h,\rho^{n+1}_h)$. Using that $f$ is Lipschitz and Young's inequality with $\sqrt{\sigma}$ for some $\sigma>0$, we can see 
\begin{align*}
   F(\tilde{\rho}^{n+1}_h)-F(\rho^{n+1}_h) =
    \int_{\bR^{2d}} (f(x)-f(y))d \tilde{\gamma}^{n+1,c}_h(x,y)
\leq & C \int\|x-y\| d \tilde{\gamma}^{n+1,c}_h(x,y)
\\
\leq & \frac{C}{2 \sigma } \int \|x-y\|^2 d \tilde{\gamma}^{n+1,c}_h(x,y)+\frac{C\sigma }{2}
\\
\leq & \frac{C}{2\sigma }  \int c_h(x,y) d\tilde{\gamma}^{n+1,c}_h(x,y)  +\frac{C\sigma }{2},
\end{align*}
where in the last step we used \eqref{eq 2}. 
Substituting this into \eqref{eq 1} yields 
 \begin{align*}
      \frac{1}{2h}W_{c_{h}}(\tilde{\rho}^{n+1}_h,\rho^{n+1}_h)\leq & \frac{C}{2\sigma }  \int c_h(x,y) d\tilde{\gamma}^{n+1,c}_h(x,y) +\frac{C\sigma }{2}
      + H(\rho^n_h)-H(\rho^{n+1}_h).
 \end{align*}
Choosing $\sigma=2Ch$ leads to  
 \begin{align*}
      \frac{1}{2h}W_{c_{h}}(\tilde{\rho}^{n+1}_h,\rho^{n+1}_h) 
      \leq &\frac{1}{4 h}  W_{c_{h}}(\tilde{\rho}^{n+1}_h,\rho^{n+1}_h) +C h+H(\rho^n_h)-H(\rho^{n+1}_h).
 \end{align*}
After rearranging we finally conclude 
 \begin{align}
 \label{eq 56}
    W_{c_{h}}(\tilde{\rho}^{n+1}_h,\rho^{n+1}_h) \leq & C\Big(  h^2+ h\big(H(\rho^n_h)-H(\rho^{n+1}_h)\big) \Big).
 \end{align}
The sum of \eqref{eq 56} over $i\in\{0,\ldots,n-1\}$ contains a telescopic component which allows for the simplified expression
 \begin{align*}
 \sum_{i=0}^{n-1} W_{c_{h}}(\tilde{\rho}^{i+1}_h,\rho^{i+1}_h) \leq & C h\Big(1  + H(\rho^0)-H(\rho^n_h) \Big),      
 \end{align*}
where we have used that $Nh=T$. To deal with the terms in the right hand side of the above expression, we employ  \eqref{eq appendix 1} to deal with $H(\rho^i_h)$, while for $H(\rho^0)$ we just use the positivity of $f$. This leads to

  \begin{align*}
 \sum_{i=0}^{n-1} W_{c_{h}}(\tilde{\rho}^{i+1}_h,\rho^{i+1}_h) \leq & C h\Big(1  + \cF(\rho^0)+\big(M(\rho^n_h)+1\big)^\alpha \Big).      
 \end{align*}
\end{proof}
\end{lemma}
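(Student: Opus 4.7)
The plan is to exploit the minimality of $\rho^{n+1}_h$ in the JKO step \eqref{eq the JKO step}. Since $\rho^{n+1}_h$ is the minimizer over $\cP_2^r(\bR^d)$, testing the energy against $\tilde{\rho}^{n+1}_h$ itself (for which the transport cost $W_{c_h}(\tilde{\rho}^{n+1}_h,\tilde{\rho}^{n+1}_h)$ vanishes) yields immediately
\begin{equation*}
\frac{1}{2h} W_{c_h}(\tilde{\rho}^{n+1}_h,\rho^{n+1}_h) \leq \cF(\tilde{\rho}^{n+1}_h) - \cF(\rho^{n+1}_h).
\end{equation*}
The decisive point is that Lemma \ref{lemma 12}\ref{item: 3} gives $H(\tilde{\rho}^{n+1}_h) = H(\rho^n_h)$, because the conservative flow pushes forward by a volume-preserving map. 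So the entropic contribution on the right converts from $\tilde{\rho}^{n+1}_h$ into $\rho^n_h$, producing a telescoping difference $H(\rho^n_h) - H(\rho^{n+1}_h)$ once we sum over $i$.

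Next I would estimate the potential difference $F(\tilde{\rho}^{n+1}_h) - F(\rho^{n+1}_h)$ by rewriting it as an integral against the optimal plan $\tilde{\gamma}^{n+1,c}_h$ and invoking the Lipschitz assumption \eqref{assumption f non-negative and Lipschitz}. This yields a term of order $\int \|x-y\| \, d\tilde{\gamma}^{n+1,c}_h$. A Young's inequality with weight $\sqrt{\sigma}$, followed by the matrix comparison $\|x-y\|^2 \leq C\, c_h(x,y)$ supplied by Lemma \ref{lemma the cost function i.e matrix A}, bounds the square by the cost itself. Taking $\sigma$ proportional to $h$ then lets me absorb the resulting $W_{c_h}$ contribution back into the left-hand side, leaving the single-step estimate
\begin{equation*}
W_{c_h}(\tilde{\rho}^{n+1}_h,\rho^{n+1}_h) \leq C\Bigl( h^2 + h\bigl(H(\rho^n_h) - H(\rho^{n+1}_h)\bigr) \Bigr).
\end{equation*}

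Finally, I would sum this inequality over $i = 0,\dots,n-1$. The $h^2$ terms add to $Nh \cdot h = Th$, bounded by $Ch$, and the entropy differences telescope into $h\bigl(H(\rho^0) - H(\rho^n_h)\bigr)$. The part $H(\rho^0)$ is controlled by $\cF(\rho^0)$ using the non-negativity of $f$. The genuinely delicate step, which I expect to be the main obstacle, is the term $-H(\rho^n_h)$: the entropy has no sign, and its negative part may be large when the mass is spread out. I would dispatch it with the classical sublinear-in-moments lower bound of the form $-H(\rho) \leq C\bigl(M(\rho)+1\bigr)^\alpha$ for some $\alpha \in (0,1)$ (the inequality referenced as \eqref{eq appendix 1} in the appendix), which produces exactly the factor $\bigl(M(\rho^n_h)+1\bigr)^\alpha$ appearing in the claim. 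Everything else is bookkeeping.
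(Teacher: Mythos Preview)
Your proposal is correct and follows essentially the same approach as the paper: compare the minimizer against $\tilde{\rho}^{n+1}_h$, use entropy preservation under the transport phase to get a telescoping entropy difference, control the potential difference via Lipschitz of $f$, Young's inequality, and the matrix comparison $\|x-y\|^2 \leq C\,c_h(x,y)$, then absorb with $\sigma \sim h$, sum, and close with the sublinear entropy lower bound \eqref{eq appendix 1}. The sequence of steps, the choice of absorption parameter, and the handling of the boundary terms $H(\rho^0)$ and $-H(\rho^n_h)$ all match the paper's argument.
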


The next Lemma provides uniform bounds in $n$ and $h$ for the second moments, free energy functionals, and positive part of the entropy functionals of the solutions from the scheme \eqref{eq scheme push forward} - \eqref{eq the JKO step}. The proof is inspired by the procedure found in  \cite{adams2021entropic,duong2014conservative,Huang00}, first obtaining bounds locally and then extending them over the full time interval.

\begin{lemma}[Boundedness of the energy functionals, second moments and the positive part of the  entropy functionals]\label{lemma 3}
For all $n\in \{0,1,\ldots,N\}$,  we have
\begin{equation*}
    M(\rho^n_h),\cF(\rho^n_h),H_+(\rho^n_h) \leq C
\qquad \textrm{and}\qquad  
    M(\tilde{\rho}^n_h),\cF(\tilde{\rho}^n_h),H_+(\tilde{\rho}^n_h) \leq C.
\end{equation*}
\begin{proof} 
Throughout this proof the constant $\bar{C}$ will change from line to line, and importantly it is independent of $\rho^0$. \\

For any $n\in\{1,\ldots,N\}$ we have that 

\begin{align}
    M(\rho^n)\leq 2\Big(M(\rho^0)+W^2_2(\rho^0,\rho^n)\Big)
    \leq& 2\Big( M(\rho^0) + n\sum^{n-1}_{i=0} W^2_2(\rho^i,\rho^{i+1})  \Big)\notag
    \\
    \leq& 4\Big( M(\rho^0) + n\sum^{n-1}_{i=0}  W^2_2(\rho^i,\tilde{\rho}^{i+1})+W^2_2(\tilde{\rho}^{i+1},\rho^{i+1})  \Big)\notag
    \\
    \leq& 4\Big( M(\rho^0) + n\sum^{n-1}_{i=0}  W^2_2(\rho^i,\tilde{\rho}^{i+1})+\bar{C}W_{c_h}(\tilde{\rho}^{i+1},\rho^{i+1})  \Big).
    \label{z13}
\end{align}
From Lemma \ref{lemma 7} we have  $4T W_2^2(\rho^i,\tilde{\rho}^{i+1})\leq \bar{C}h^2(1+M(\rho^i))$ for a constant $\bar{C}$ (independent of the initial condition), substituting this, and the bound \eqref{z12} into \eqref{z13}, we have, whilst noting $hN=T$,
\begin{align}
    M(\rho^n)\leq& 4 \Big( M(\rho^0) +  \bar{C}\big(1+\cF(\rho^0)\big) \Big) + \bar{C}\Big((1+M(\rho^n))^\alpha+h \sum_{i=0}^{n-1} (1+M(\rho^i))\Big)
    \nonumber
   \\
   \leq& C + \bar{C}\Big((1+M(\rho^n))^\alpha+h \sum_{i=0}^{n-1} (1+M(\rho^i))\Big)
    \nonumber
    \\
    \leq&  C + \bar{C}\Big((1+M(\rho^n))^\alpha+h \sum_{i=0}^{n-1} M(\rho^i)\Big),
    \label{p0}
    \end{align}
 for a constant $C$ depending only on $\cF(\rho^0)$ and $M(\rho^0)$, and constant $\bar{C}$ independent of $\rho^0$. Since the $\bar{C}$ appearing in \eqref{p0} is fixed and independent of the initial condition, we can find $h_0>0,N_0\in \bN$ (independent of the initial condition) such that for all $h\leq h_0$ we have $h N_0 \bar{C}\leq \frac{1}{2}$. Set $M_{N_{0}}:=\underset{n=1,\ldots, N_{0}}{\text{max}}M(\rho^n)$. Then \eqref{p0} implies 
 
 \begin{align}
     M_{N_{0}}\leq& C+ \bar{C}\Big((1+ M_{N_{0}})^\alpha+hN_0 M_{N_{0}}\Big)
     \nonumber
     \\
     \leq&C+ \bar{C}(1+ M_{N_{0}})^\alpha+ \frac{1}{2}M_{N_{0}},
     \nonumber
 \end{align}
    which implies 
     \begin{align}
     M_{N_{0}}\leq 2\big(C+ \bar{C}(1+ M_{N_{0}})^\alpha\big),
 \end{align}
 from which we can conclude $M(\rho^n)\leq C$, for all $n=1,\ldots,N_0$, and all $h\leq h_0$.  For the free energy, note that by definition of $\rho^{i+1}$, we have that 
\begin{align*}
    \cF(\rho^{i+1})-F(\tilde{\rho}^{i+1})-H(\tilde{\rho}^{i+1})\leq 0,
\end{align*}
adding and subtracting $F(\rho^i)$, and recalling that $H(\rho^i)=H(\tilde{\rho}^{i+1})$, implies
\begin{align}\label{z17}
    \cF(\rho^{i+1})-\cF(\rho^{i})&\leq |F(\rho^{i})-F(\tilde{\rho}^{i+1})|.
\end{align}
Summing \eqref{z17} from $i=0,\ldots, n-1$, using that $f$ is Lipschitz, and applying Young's inequality for some $\sigma>0$, we have

\begin{align}
    \cF(\rho^{n})-\cF(\rho^{0})\leq \sum_{i=0}^{n-1} |F(\rho^{i})-F(\tilde{\rho}^{i+1})|
  \leq C \sum_{i=0}^{n-1} \int_{\bR^{2d}} \|x-y\|d\gamma^i(x,y)
  \leq
    C \sum_{i=0}^{n-1}\Big( \frac{1}{\sigma} W_2^2(\rho^i,\tilde{\rho}^{i+1})+\sigma \Big).
    \label{z18}
\end{align}

Now let $N_0,h_0$ be chosen as before, and let $n=1,\ldots,N_0$. We know, by \ref{lemma 1} and the bounded moments just proved, that $W_2^2(\rho^i,\tilde{\rho}^{i+1})\leq Ch^2(1+M(\rho^i))\leq C h^2$ for $i\leq n$. Therefore, choosing $\sigma=h$ in \eqref{z18} implies the uniform bounded energies $\cF(\rho^n)\leq C$. Note that $\cF(\rho^n)\leq C$ implies $H(\rho^n)\leq C$, moreover, \eqref{eq appendix 1} and the uniform bounds on $M(\rho^n_h)$ imply that $H_-(\rho^n) \leq C$, therefore we have that $H_+(\rho^n)\leq C$. So far we have established the uniform bounds 

\begin{equation}\label{eq uniform moments new proof}
    M(\rho^n),\cF(\rho^n),H_+(\rho^n)\leq C, ~~~~\forall n=1,\ldots,N_0,~h\leq h_0.
\end{equation}
Since the $N_0$ and $h_0$ we have chosen are independent of the initial data we can extend the bound \eqref{eq uniform moments new proof} to all $n\in \{1,\ldots, N\}$ similarly as has been done in \cite[Lemma 5.3]{Huang00}, see also \cite{duong2014conservative}. The uniform bounds  $M(\rho^n_h),\cF(\rho^n_h),H_+(\rho^n_h) \leq C$, Lemma \ref{lemma 4}, and another application of \eqref{eq appendix 1} establishes   $M(\tilde{\rho}^n_h),\cF(\tilde{\rho}^n_h),H_+(\tilde{\rho}^n_h) \leq C$, completing the proof.

\end{proof}
\end{lemma}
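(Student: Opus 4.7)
The plan is to establish the uniform second-moment bound on $\rho^n_h$ first, then deduce the bounds on $\cF$ and $H_+$ from it, and finally transfer everything to $\tilde\rho^n_h$ via Lemma \ref{lemma 4}. For the moments, I would start from the crude triangle-type estimate
\[
M(\rho^n_h)\leq 2 M(\rho^0)+2 W_2^2(\rho^0,\rho^n_h)\leq 2 M(\rho^0)+2 n\sum_{i=0}^{n-1} W_2^2(\rho^i_h,\rho^{i+1}_h),
\]
split each $W_2(\rho^i_h,\rho^{i+1}_h)$ through the intermediate point $\tilde\rho^{i+1}_h$, and bound the two pieces by Lemma \ref{lemma 7} (giving $W_2^2(\rho^i_h,\tilde\rho^{i+1}_h)\leq Ch^2(1+M(\rho^i_h))$) and by Lemma \ref{lemma a sum of transport costs} together with \eqref{eq 2} (giving $\sum_i W_2^2(\tilde\rho^{i+1}_h,\rho^{i+1}_h)\leq C h(1+\cF(\rho^0)+(1+M(\rho^n_h))^\alpha)$). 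This yields a self-bounding inequality of the form
\[
M(\rho^n_h)\leq C+\bar C\bigl((1+M(\rho^n_h))^\alpha+h\sum_{i=0}^{n-1}M(\rho^i_h)\bigr),
\]
where, crucially, $\bar C$ is independent of the initial data.

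The main obstacle is the circular dependence hidden in this inequality: $M(\rho^n_h)^\alpha$ appears on the right, and so does a sum of all earlier moments. The standard way around it is a local-in-time bootstrap. I would fix $N_0\in\bN$ and $h_0>0$ (independent of $\rho^0$) so that $\bar C h N_0\leq 1/2$, let $M_{N_0}:=\max_{1\leq n\leq N_0} M(\rho^n_h)$, and apply the inequality at the maximising index to get
\[
M_{N_0}\leq 2C+2\bar C(1+M_{N_0})^\alpha,
\]
which closes since $\alpha<1$ (assuming this is what is meant, as is typical for such Csiszár-Kullback-style bounds). This produces a uniform bound on $[0,N_0 h]$, and since the constants do not depend on initial data one iterates the argument restarting from $\rho^{N_0}_h$, covering the whole interval as in \cite{Huang00}*{Lemma 5.3}.

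Once $M(\rho^n_h)\leq C$ is in hand, I would bound $\cF(\rho^n_h)$ by using $\rho^{n+1}_h$'s optimality against $\tilde\rho^{n+1}_h$, which together with the entropy conservation $H(\tilde\rho^{i+1}_h)=H(\rho^i_h)$ from Lemma \ref{lemma 12} gives the telescopic-like bound
\[
\cF(\rho^n_h)-\cF(\rho^0)\leq \sum_{i=0}^{n-1}|F(\rho^i_h)-F(\tilde\rho^{i+1}_h)|.
\]
Using $f$ Lipschitz, the optimal plan $\gamma^i_h$, and Young's inequality with parameter $\sigma=h$, each summand is controlled by $C h^{-1} W_2^2(\rho^i_h,\tilde\rho^{i+1}_h)+C h\leq C h$ thanks to Lemma \ref{lemma 7} and the already-proved moment bound, summing to a uniform $\cF(\rho^n_h)\leq C$.

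Finally, to pass from $\cF\leq C$ to $H_+\leq C$, I would use the standard lower bound $H_-(\rho)\leq C(1+M(\rho))^\alpha$ (the inequality referred to as \eqref{eq appendix 1}) together with the uniform moment bound and $f\geq 0$ to obtain $H_+(\rho^n_h)=H(\rho^n_h)+H_-(\rho^n_h)\leq \cF(\rho^n_h)+H_-(\rho^n_h)\leq C$. The corresponding bounds on $\tilde\rho^n_h$ are then immediate from Lemma \ref{lemma 4} applied at step $n-1$, and one more application of the same $H_-$ estimate. The really delicate step in the whole argument is the bootstrap that closes the moment inequality; everything else is bookkeeping.
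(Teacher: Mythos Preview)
Your proposal is correct and follows the paper's proof essentially step for step: the same triangle-type moment estimate split through $\tilde\rho^{i+1}_h$, the same self-bounding inequality closed via the $N_0,h_0$ bootstrap (using that $\bar C$ is independent of initial data and $\alpha<1$, which is indeed stated in Lemma \ref{lemma appendix 1}), the same iteration \`a la \cite{Huang00}, the same free-energy estimate via optimality and entropy conservation with $\sigma=h$, and the same transfer to $\tilde\rho^n_h$ via Lemma \ref{lemma 4}.
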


\color{black}

Lemma \ref{lemma 3} states the uniform bounds for the discrete elements of our schemes. The following Lemma induces those bounds for the interpolations \eqref{eq interpolation 1}, \eqref{eq interpolation 2} and \eqref{eq interpolation 3}.

\begin{lemma}[A priori estimates for the interpolations]\label{lemma 8}
For all $n\in \{0,1,\ldots,N\}$, the moments, free-energies and the positive part of the entropies are uniformly bounded (in $n,h,t$), namely,
$$
M(\rho_h(t,\cdot)),M(\tilde{\rho}_h(t,\cdot)),M(\rho^{\dag}_h(t,\cdot)),\cF(\rho_h(t,\cdot)),\cF(\tilde{\rho}_h(t,\cdot)),\cF(\rho^{\dag}_h(t,\cdot)),H_+(\rho_h(t,\cdot)),H_+(\tilde{\rho}_h(t,\cdot)),H_+(\rho^\dag_h(t,\cdot))\leq C.
$$

\begin{proof}
These results for the interpolations follow easily from Lemma \ref{lemma 3}. Indeed, it is immediate from their definitions how this is inferred for the interpolations $\rho_h(t,\cdot),\tilde{\rho}(t,\cdot)$. For $\rho^\dag_h(t,\cdot)$, just notice from Lemma \ref{lemma 7} that we have  $M(\rho^\dag_h(t,\cdot))\leq C$. This uniform moment bound gives us the other two bounds for $\rho^\dag_h(t,\cdot)$: for the free energy one follows the argument in Lemma \ref{lemma 4} (using the bounded energy of $\rho^n_h$), and for the positive entropy one uses again \eqref{eq appendix 1}.
\end{proof}
\end{lemma}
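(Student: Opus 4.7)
The plan is to lift the discrete a priori bounds already established in Lemma~\ref{lemma 3} for the iterates $\{\rho^n_h\}$ and $\{\tilde{\rho}^n_h\}$ to the three time-continuous interpolations $\rho_h$, $\tilde{\rho}_h$, and $\rho^\dag_h$. Because the constant $C$ in Lemma~\ref{lemma 3} is independent of both $n$ and $h$, any pointwise-in-$t$ transfer will automatically yield the desired uniform-in-$(t,n,h)$ bound.

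\textbf{The piecewise-constant interpolations.} For $\rho_h$ and $\tilde{\rho}_h$ the argument is immediate: by the definitions \eqref{eq interpolation 1}--\eqref{eq interpolation 2}, on each interval $[t_n,t_{n+1})$ these interpolations coincide with the single discrete iterates $\rho^{n+1}_h$ and $\tilde{\rho}^{n+1}_h$ respectively, so the three bounds (second moment, free energy, positive part of the entropy) from Lemma~\ref{lemma 3} carry over verbatim.

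\textbf{The flow interpolation $\rho^\dag_h$.} This is the only case requiring a short argument, and I would proceed in three steps. First, bound the second moment: Lemma~\ref{lemma 7} supplies $M(\rho^\dag_h(t,\cdot))\leq C(M(\rho^n_h)+1)$ for $t\in[t_n,t_{n+1})$, which combined with the uniform bound $M(\rho^n_h)\leq C$ from Lemma~\ref{lemma 3} gives $M(\rho^\dag_h(t,\cdot))\leq C$. Second, bound the free energy by repeating the argument of Lemma~\ref{lemma 4}: the entropy-preservation identity of Lemma~\ref{lemma 12}\ref{item: 3} gives $H(\rho^\dag_h(t,\cdot))=H(\rho^n_h)$, hence $\cF(\rho^\dag_h(t,\cdot)) = F(\rho^\dag_h(t,\cdot)) + H(\rho^n_h)$; the potential-energy term $F(\rho^\dag_h(t,\cdot))$ is controlled by the just-obtained moment bound via the Lipschitzness of $f$, and $H(\rho^n_h)\leq \cF(\rho^n_h)\leq C$. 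Third, for $H_+(\rho^\dag_h(t,\cdot))$ I would use $H_+ = H + H_-$: the total entropy $H$ is bounded above through $\cF-F$ (both terms uniform) and the negative part $H_-$ is bounded through the moment bound via the standard inequality \eqref{eq appendix 1}.

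\textbf{Main obstacle.} There is no real obstacle; the substantive work has been done in Lemmas~\ref{lemma 12}, \ref{lemma 7}, \ref{lemma 4} and \ref{lemma 3}, and the present statement is essentially a packaging step. The only mild subtlety is that $\rho^\dag_h$ is not piecewise constant in time, which forces the short indirect argument above rather than a direct citation of Lemma~\ref{lemma 3}.
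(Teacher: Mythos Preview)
Your proposal is correct and follows essentially the same approach as the paper: the piecewise-constant interpolations inherit the bounds of Lemma~\ref{lemma 3} directly, while for $\rho^\dag_h$ you use Lemma~\ref{lemma 7} for the moment, the argument of Lemma~\ref{lemma 4} (entropy preservation plus Lipschitzness of $f$) for the free energy, and \eqref{eq appendix 1} for $H_+$. The paper's proof is terser but structurally identical.
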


The uniform bounds established in Lemma \ref{lemma 3} allow us to control the transport cost (w.r.t to both cost functions $c_h$ and $\|\cdot\|^2$) of the JKO step.

\begin{lemma}[Estimates of the sum of optimal transport costs]\label{lemma 5}
We have 
\begin{equation}\label{eq 67--eq 10}
    \sum_{n=0}^{N-1}W_{c_{h}}(\tilde{\rho}^{n+1}_h,\rho^{n+1}_h)\leq C h,
\quad \textrm{and} \quad
    \sum_{n=0}^{N-1} W_2^2(\tilde{\rho}^{n+1}_h,\rho^{n+1}_h) \leq C h.
\end{equation}
\begin{proof}
The estimate \eqref{z12}, together with the uniform bounds of Lemma \ref{lemma 3}, gives the first result of \eqref{eq 67--eq 10}. The second result is immediate from the first and \eqref{eq 2}, since 
\begin{align*}
     \sum_{n=0}^{N-1} W_2^2(\tilde{\rho}^{n+1}_h,\rho^{n+1}_h) \leq   C \sum_{n=0}^{N-1} W_{c_h}(\tilde{\rho}^{n+1}_h,\rho^{n+1}_h) \leq C h.
\end{align*}
\end{proof}
\end{lemma}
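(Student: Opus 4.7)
The plan is to combine two ingredients already at hand: the sum estimate of Lemma \ref{lemma a sum of transport costs} and the uniform a priori bounds of Lemma \ref{lemma 3}.

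For the first inequality, I would apply Lemma \ref{lemma a sum of transport costs} at index $n = N-1$ (if the resulting sum is shorter by one term than $\sum_{n=0}^{N-1}$, I would add the missing final term $W_{c_h}(\tilde{\rho}^N_h,\rho^N_h)$, which obeys the single-step bound \eqref{eq 56} derived inside the proof of that lemma). This gives a right-hand side of the form
\[
 C h\Big(1 + \cF(\rho^0) + \big(M(\rho^N_h)+1\big)^{\alpha}\Big).
\]
Since $\cF(\rho^0)<\infty$ by hypothesis of Theorem \ref{Theorem Main} and $M(\rho^N_h)\leq C$ uniformly in $h$ and $N$ by Lemma \ref{lemma 3}, every factor in the parentheses is bounded by a constant independent of $h$ and $N$. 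One therefore concludes
\[
 \sum_{n=0}^{N-1}W_{c_h}(\tilde{\rho}^{n+1}_h,\rho^{n+1}_h) \leq C h.
\]

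For the second inequality, I would use the comparison $\|x-y\|^2 \leq C\, c_h(x,y)$ from Lemma \ref{lemma the cost function i.e matrix A} (equation \eqref{eq 2}). Using the optimal $c_h$-coupling $\tilde{\gamma}^{n+1,c}_h\in \Pi(\tilde{\rho}^{n+1}_h,\rho^{n+1}_h)$ as a (possibly suboptimal) competitor for the quadratic Wasserstein cost,
\[
 W_2^2(\tilde{\rho}^{n+1}_h,\rho^{n+1}_h)
 \leq \int_{\bR^{2d}} \|x-y\|^2 \, d\tilde{\gamma}^{n+1,c}_h(x,y)
 \leq C\int_{\bR^{2d}} c_h(x,y)\, d\tilde{\gamma}^{n+1,c}_h(x,y)
 = C\, W_{c_h}(\tilde{\rho}^{n+1}_h,\rho^{n+1}_h).
\]
Summing over $n=0,\ldots,N-1$ and inserting the first inequality yields the desired $\sum_{n=0}^{N-1} W_2^2 \leq Ch$.

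There is no real obstacle here: both estimates are essentially bookkeeping consequences of the heavy lifting already done. The only point that requires a small sanity check is the index range in Lemma \ref{lemma a sum of transport costs}; if its statement as written covers only sums up to $i=N-2$, the last term is recovered by applying the telescoping bound \eqref{eq 56} once more, which still fits under the uniform entropy bound of Lemma \ref{lemma 3}.
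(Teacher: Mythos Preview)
Your proposal is correct and follows essentially the same approach as the paper: invoke the sum estimate \eqref{z12} together with the uniform moment bound from Lemma \ref{lemma 3} for the first inequality, and then use the pointwise comparison \eqref{eq 2} to deduce the second. Your treatment is in fact slightly more explicit than the paper's, in that you spell out the coupling argument behind $W_2^2\leq C\,W_{c_h}$ and flag the index-range subtlety in Lemma \ref{lemma a sum of transport costs}.
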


The uniform moment bounds (in conjunction with the preliminary observation of Lemma \ref{lemma 7}) allow us to control the Wasserstein cost of the conservative phase.

\begin{lemma}[Estimates of the sum of optimal transport costs for the conservative dynamics]\label{lemma 10}
We have 
\begin{equation}\label{eq 11}
    \sum_{n=0}^{N-1} W_2^2(\rho^{n}_h,\tilde{\rho}^{n+1}_h) \leq C h.
\end{equation}
\end{lemma}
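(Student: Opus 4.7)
The plan is to invoke the continuity-equation estimate from Lemma \ref{lemma 7} on each sub-interval and then combine it with the uniform moment bounds established in Lemma \ref{lemma 3} (and its interpolated version, Lemma \ref{lemma 8}). The key observation is that each pair $(\rho^n_h, \tilde{\rho}^{n+1}_h)$ is connected through the conservative phase by the solution $\mu$ of the continuity equation \eqref{eq 5} on the interval $[0,h]$ starting from $\rho^n_h$, so $\mu(0,\cdot)=\rho^n_h$ and $\mu(h,\cdot)=\tilde{\rho}^{n+1}_h$.

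First I would apply \eqref{eq 12} with $s_1=0$ and $s_2=h$. This yields
\begin{equation*}
W_2^2(\rho^n_h,\tilde{\rho}^{n+1}_h)\leq C h\int_0^h\bigl(1+M(\mu(s,\cdot))\bigr)\,ds.
\end{equation*}
By the moment bound inside the proof of Lemma \ref{lemma 7} (specifically the Gr\"onwall estimate \eqref{eq 94}), we have $M(\mu(s,\cdot))\leq C(M(\rho^n_h)+1)$ for all $s\in[0,h]$. Combining this with the uniform bound $M(\rho^n_h)\leq C$ from Lemma \ref{lemma 3}, the integrand is bounded by a constant independent of $n$ and $h$, so
\begin{equation*}
W_2^2(\rho^n_h,\tilde{\rho}^{n+1}_h)\leq C h^2.
\end{equation*}

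Finally, summing over $n=0,\ldots,N-1$ and using $Nh=T$, we obtain
\begin{equation*}
\sum_{n=0}^{N-1}W_2^2(\rho^n_h,\tilde{\rho}^{n+1}_h)\leq C N h^2 = CTh \leq C h,
\end{equation*}
which is exactly \eqref{eq 11}. There is no real obstacle here: the estimate is a direct consequence of the already established continuity-equation bound together with the uniform moment control; the only thing one needs to be careful about is that the constant $C$ in Lemma \ref{lemma 7} is indeed independent of $n$ and $h$, which is guaranteed by Assumption \ref{assumption : main assumptionso on drift and diffusion matrix} on $b$ and the uniformity obtained in Lemma \ref{lemma 3}.
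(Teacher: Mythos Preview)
Your proof is correct and follows essentially the same route as the paper's own proof: apply the continuity-equation estimate \eqref{eq 12} from Lemma~\ref{lemma 7} on each sub-interval to obtain $W_2^2(\rho^n_h,\tilde{\rho}^{n+1}_h)\leq C h^2(1+M(\rho^n_h))$, then invoke the uniform moment bounds of Lemma~\ref{lemma 3} and sum using $Nh=T$. The paper's version is simply a one-line compression of exactly the steps you wrote out.
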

\begin{proof}
We recall \eqref{eq 12}, implying that for any $n=0,\ldots, N-1$,       $W_2^2(\rho^n_h,\tilde{\rho}_h^{n+1})\leq C h^2(1+M(\rho_h^n))$, the uniform bounded moment estimates then give the result.
\end{proof}
\color{black}

\subsection{Convergence of the operator-splitting scheme}
\label{section convergence}
Having obtained a priori estimates, in this section we prove the main theorem, Theorem \ref{Theorem Main}, that is the convergence of the time-interpolations of the discrete solutions constructed from the operator-splitting scheme in Section \ref{sec the scheme} to a weak solution of the main evolutionary equation \eqref{eq main PDE}. The following Lemma shows that these interpolations converge to limits which are equal almost everywhere to some curve $[0,T] \ni t \mapsto \rho(t,\cdot)\in \cP_2^r(\bR^d)$, and moreover, the sequences $\rho_h(t,\cdot),\tilde{\rho}_h(t,\cdot),\rho^\dag_h(t,\cdot)$ converge in $W_2$ to $\rho$, uniformly in time.

\begin{lemma}\label{lemma 9}[Convergence of the time-interpolations] There exists a curve $[0,T]\ni t \mapsto \rho(t,\cdot) \in \cP_2^r(\bR^d)$, such that 
\begin{equation}
    \underset{h\to 0}{\lim} \sup_{t\in[0,T]} \max \Big\{ W_2( \rho_{h}(t,\cdot),\rho(t,\cdot)),  W_2(\tilde{\rho}_h(t,\cdot),\rho(t,\cdot)), W_2(\rho^\dag_h(t,\cdot),\rho(t,\cdot)) \Big\}=0.
\end{equation}
\begin{proof}
The proof follows an adapted version of \cite{ambrosio2008gradient}*{Theorem 11.1.6},  since we will require uniform in time convergence we utilise a refined version of the Arzela-Ascoli theorem \cite{ambrosio2008gradient}*{Proposition 3.3.1} in a similar manner to \cite{carlier2017splitting}[Section 5.1]. We provide the argument for $\rho_h$ only, the approach for $\tilde{\rho}_h,\rho^\dag_h$ is similar. To obtain the uniform convergence we set up an Arzela-Ascoli argument. Since the paths $\rho_h$ are not continuous we introduce the continuous concatenation of $\{\rho^n_h\}_n$ by geodesics. 
Let $n\in\{1,\ldots, N-1\}$. Fix any $s,t\in [0,T]$, define the path $\nu_h:[0,T]\to \cP_2(\bR^d)$ by concatenating $\rho^n_h$ and $\rho^{n+1}_h$ on $[t_{n-1},t_{n}]$ by a constant speed geodesic. Then for  $t\in [t_{n-1},t_n)$
\begin{align*}
W_2(\rho_h(t),\nu_h(t))
= W_2(\rho^{n}_h,\nu_h(t))
= & W_2(\nu_h(t_{n-1}),\nu_h(t)) 
\\
\leq & W_2(\rho^{n}_h,\rho_h^{n+1})(t-t_{n-1})\leq  W_2(\rho^{n}_h,\rho_h^{n+1}) h \leq C h, 
\end{align*}
by the bounded moments of Lemma \ref{lemma 8}. 
Let $t<s$, $t\in[ih,(i+1)h)$, $s\in[jh,(j+1)h)$, for some $i,j\in \{0,\ldots,N-1\}$. We then have 
\begin{align*}
    W_2(\nu_h(t),\nu_h(s))\leq& W_2(\nu_h(t),\rho^{i+1}_h)+W_2(\rho^{i+1}_h,\rho^{j+1}_h)+W_2(\rho^{j+1}_h,\nu_h(s))
    \\
    \leq&  h W_2(\rho^{i+2}_h,\rho^{i+1}_h)+W_2(\rho^{i+1}_h,\rho^{j+1}_h)+ h W_2(\rho^{j+1}_h,\rho^{j+2}_h)
    \\
    \leq& Ch + W_2(\rho^{i+1}_h,\rho^{j+1}_h).
\end{align*}
Using the triangle property and then the Cauchy Schwartz inequality we have 
\begin{align*}
W_2(\rho^{i+1}_h,\rho^{j+1}_h)\leq& \sum_{n=i+1}^{j}W_2(\rho_h^{n},\rho_h^{n+1})
\leq
\sqrt{(j-i)h}\sqrt{\frac{1}{h} \sum_{n=i+1}^{j}W^2_2(\rho_h^{n},\rho_h^{n+1})} 
\leq 
\sqrt{(j-i)h}\sqrt{\frac{1}{h} \sum_{n=0}^{N-1}W^2_2(\rho_h^{n},\rho_h^{n+1})}
\\
\leq& \sqrt{(j-i)h}\sqrt{\frac{2}{h} \sum_{n=0}^{N-1}W^2_2(\rho_h^{n},\tilde{\rho}_h^{n+1})+W^2_2(\tilde{\rho}^{n+1},\rho^{n+1})}
\leq
C \sqrt{(j-i)h} \leq C\sqrt{s-t}.
\end{align*}
Therefore the family $\nu_h$ is uniformly equicontinuous, \cite{ambrosio2008gradient}*{Proposition 3.3.1}. This implies that for all $t\in[0,T]$ $\{\nu_h(t,\cdot)\}_{h>0} \subset (\cP_2(\bR^d),W_2)$ is (point-wise) precompact,  and hence we can use Arzela-Ascoli \cite{salamontheo}*{Theorem 1.1.11} to obtain uniform convergence (taking subsequences if necessary) in $W_2$ (over $[0,T]$ as $h \to 0$) of the paths $\nu_h$ to a limit, which we call $\rho$. We are then able to deduce the uniform convergence of $\rho_h$ to $\rho$ from that of $\nu_h$, namely 
\begin{align*}
   \lim_{h\to0} \sup_{t\in [0,T]} W_2(\rho_h(t),\rho(t))
   \leq
   & \lim_{h\to0} 
   \sup_{t\in [0,T]}\Big( W_2\big(\rho_h(t),\nu_h(t)\big)+W_2\big(\nu_h(t),\rho(t)\big)\Big)
    \\
    \leq& 
    \lim_{h\to0} \Big( Ch+\sup_{t\in [0,T]}W_2\big(\nu_h(t),\rho(t)\big) \Big)=0.
\end{align*}
Since $W_2$ induces a stronger topology than that of weak convergence we have for all $t\in[0,T]$ $\rho_h(t,\cdot)\rightharpoonup \rho(t,\cdot) \in \cP_2(\bR^d)$. Moreover, by the uniform moment and entropy bounds (see Lemma \ref{lemma 8}) we have by \eqref{eq appendix l.s.c of entropy} that the limit $\rho(t,\cdot) \in \cP_2^r(\bR^d)$.
By an almost identical procedure (this time concatenating geodesics between $\{\tilde{\rho}^n_h\}_n$, and using \eqref{eq 12} for $\rho^\dag_h$) we get the same convergence of $\tilde{\rho}_h,\rho^\dag_h$ to some limit 
$\tilde{\rho} \in \cP_2^r(\bR^d)$. 
It remains only to show that $\rho=\tilde{\rho}$ a.e., note we have, for instance using the Dominated Convergence theorem, letting $\varphi \in C^\infty_c([0,T),\bR^d)$

\begin{align*}
    \int_0^T\int_{\bR^d} \big(\tilde{\rho}(t,x)-\rho(t,x)\big)\varphi(t,x) dxdt=& \lim_{h\to0}\int_0^T\int_{\bR^d} \big(\tilde{\rho}_h(t,x)-\rho_h(t,x)\big)\varphi(t,x) dxdt
    \\
    =& \lim_{h\to0} \sum_{n=0}^{N-1}\int_{t_n}^{t_{n+1}}\int_{\bR^d} \big(\tilde{\rho}^{n+1}_h(x)-\rho^{n+1}_h(x)\big)\varphi(t,x) dxdt
    \\
    =& \lim_{h\to0} \sum_{n=0}^{N-1}\int_{t_n}^{t_{n+1}}\int_{\bR^{2d}} \big(\varphi(t,x)-\varphi(t,y)\big)\tilde{\gamma}^{n+1}(dx,dy)dt,
\end{align*}
where we recall $\tilde{\gamma}^{n+1}$ is the optimal coupling between $\rho^{n+1}$ and $\tilde{\rho}^{n+1}$ in $W_2$. By Taylor's theorem, Jensen inequality and then Cauchy Schwartz, we have
\begin{align*}
    \int_0^T\int_{\bR^d} \big(\tilde{\rho}(t,x)-\rho(t,x)\big)\varphi(t,x) dxdt\leq&
    \lim_{h\to0} h \sup\|\nabla \varphi\| \sum_{n=0}^{N-1}\int_{\bR^{2d}} \|x-y\| \gamma^{n+1}(dx,dy)
    \\
    \leq& 
    \lim_{h\to0} h \sup\|\nabla \varphi\| \sum_{n=0}^{N-1}W_2(\tilde{\rho}^{n+1}_h,\rho^{n+1}_h)
    \\
    \leq& \lim_{h\to0} h \sqrt{N} \sup\|\nabla \varphi\| \sqrt{\sum_{n=0}^{N-1}W^2_2(\tilde{\rho}_h^{n+1},\rho^{n+1}_h)}
    \\
   \leq&\lim_{h\to0} C h \sqrt{T} \sup\|\nabla \varphi\|=0,
    \end{align*}
    where in the last line we used Lemma \ref{lemma 5}. We are then able to conclude that  $\tilde{\rho}$ and $\rho$ are equal a.e.

\end{proof}
\end{lemma}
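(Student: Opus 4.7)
The strategy is to apply a version of Arzela-Ascoli in the metric space $(\cP_2(\bR^d), W_2)$. Since $\rho_h$ and $\tilde{\rho}_h$ are piecewise constant (hence discontinuous) in $t$, I would first replace them by continuous ``geodesic concatenations'' $\nu_h, \tilde{\nu}_h$, obtained by linking $\rho^n_h$ to $\rho^{n+1}_h$ (respectively $\tilde{\rho}^n_h$ to $\tilde{\rho}^{n+1}_h$) by a constant-speed $W_2$-geodesic on each subinterval $[t_n, t_{n+1}]$. The curve $\rho^\dag_h$ is already continuous in $t$ through the Hamiltonian flow. I then need two ingredients: (a) pointwise-in-$t$ precompactness of $\{\nu_h(t)\}_{h>0}$ in $(\cP_2(\bR^d), W_2)$, and (b) a uniform-in-$h$ modulus of continuity on $[0,T]$.

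Ingredient (a) follows from Lemma~\ref{lemma 8}: uniform second-moment bounds yield tightness, and combined with the uniform positive-entropy bound $H_+(\nu_h(t,\cdot)) \leq C$ one obtains relative compactness in $W_2$, with any limit point automatically lying in $\cP_2^r(\bR^d)$ via lower semicontinuity of $H$. For ingredient (b), given $t \in [t_i, t_{i+1})$ and $s \in [t_j, t_{j+1})$ with $i \leq j$, the triangle inequality and Cauchy--Schwarz give
\[
W_2(\rho^i_h, \rho^j_h) \leq \sum_{n=i}^{j-1} W_2(\rho^n_h, \rho^{n+1}_h) \leq \sqrt{(j-i)h}\,\sqrt{\tfrac{1}{h}\sum_{n=0}^{N-1} W_2^2(\rho^n_h, \rho^{n+1}_h)}.
\]
Splitting $W_2^2(\rho^n_h, \rho^{n+1}_h) \leq 2 W_2^2(\rho^n_h, \tilde{\rho}^{n+1}_h) + 2 W_2^2(\tilde{\rho}^{n+1}_h, \rho^{n+1}_h)$ and applying Lemmas~\ref{lemma 5} and~\ref{lemma 10} yields $\frac{1}{h}\sum_n W_2^2(\rho^n_h, \rho^{n+1}_h) \leq C$, and consequently $W_2(\nu_h(s), \nu_h(t)) \leq C\sqrt{|t-s|} + Ch$, a uniform $1/2$-Holder modulus up to an $O(h)$ shift. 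A metric-space-valued Arzela-Ascoli argument then extracts a subsequence $h_k \to 0$ along which $\nu_{h_k}$ converges uniformly on $[0,T]$ in $W_2$ to some limit $\rho:[0,T] \to \cP_2^r(\bR^d)$.

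The final step is to transfer the convergence from $\nu_h$ back to the three interpolations of interest and to show they share this common limit $\rho$. On each $[t_n, t_{n+1})$ one has $W_2(\rho_h(t), \nu_h(t)) \leq W_2(\rho^n_h, \rho^{n+1}_h)$, which tends to $0$ uniformly in $n$ because the sum of squares is $O(h)$; hence $\sup_t W_2(\rho_h(t), \rho(t)) \to 0$, and the identical argument applies to $\tilde{\rho}_h$. For $\rho^\dag_h$ one uses additionally the pointwise bound $W_2^2(\rho^\dag_h(t), \rho^n_h) \leq Ch^2(1+M(\rho^n_h))$ from Lemma~\ref{lemma 7}, which vanishes uniformly as $h \to 0$ thanks to the uniform moment bound of Lemma~\ref{lemma 8}. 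To confirm that all three limits genuinely coincide (rather than agreeing only up to a null set in $t$), one can test against $\varphi \in C_c^\infty$ and bound the difference $\int \langle \rho_h - \tilde{\rho}_h, \varphi\rangle\,dt$ by $\|\nabla\varphi\|_\infty \sum_n h\, W_2(\rho^{n+1}_h, \tilde{\rho}^{n+1}_h) = O(\sqrt{h})$ by Cauchy--Schwarz and Lemma~\ref{lemma 5}.

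The main technical obstacle I anticipate is carefully setting up the continuous geodesic interpolations and quantifying how far they sit from the piecewise-constant objects $\rho_h, \tilde{\rho}_h$, so that the discrete estimate $\sum_n W_2^2(\rho^n_h, \rho^{n+1}_h) = O(h)$ translates cleanly into a genuine Holder modulus for an Arzela-Ascoli argument; this is standard but bookkeeping-heavy. A secondary subtlety is ensuring that the limit lies in $\cP_2^r(\bR^d)$ rather than merely $\cP_2(\bR^d)$, which relies on the uniform bound on $H_+$ from Lemma~\ref{lemma 8} combined with lower semicontinuity of the entropy under narrow convergence, preventing the limit from concentrating on Lebesgue-null sets.
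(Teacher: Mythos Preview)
Your proposal is correct and follows essentially the same approach as the paper: geodesic concatenation to obtain continuous curves, equicontinuity via the triangle inequality and Cauchy--Schwarz combined with Lemmas~\ref{lemma 5} and~\ref{lemma 10}, Arzel\`a--Ascoli in $(\cP_2(\bR^d),W_2)$, transfer back to the piecewise-constant interpolations, and finally identification of the limits by testing against $\varphi\in C_c^\infty$. The only cosmetic difference is that the paper bounds $W_2(\rho_h(t),\nu_h(t))$ using the uniform moment bound, whereas you invoke $\max_n W_2(\rho^n_h,\rho^{n+1}_h)\leq C\sqrt{h}$ from the sum-of-squares estimate; both are valid.
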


\textcolor{blue}{We can also ascertain the $L^1$ convergence \eqref{eq: convergence}, i.e. fix $t\in[0,T]$, we show that we have weak $L^1(\bR^d)$ convergence of $\rho_h(t,\cdot),\tilde{\rho}_h(t,\cdot),$ and $\rho^\dag_h(t,\cdot)$ to $\rho(t,\cdot)$ (the same limit as found in the previous Lemma \ref{lemma 9}), that is convergence against $L^\infty(\bR^d)$ functions not just those in $C_b(\bR^d)$. Indeed, since $x\mapsto \max\{x \log x,0\}$ is a superlinear function, the uniform bounds on the positive entropy (Lemma \ref{lemma 8}) implies the families $\{\rho_h(t,\cdot) \}_h,\{\tilde{\rho}_h(t,\cdot) \}_h,\{\rho^\dag_h(t,\cdot) \}_h$ are equi-integrable, and hence, by the weak convergence of the previous lemma,  \cite{santambrogio2015optimal}*{Box 8.2 (p301)} implies the weak $L^1(\bR^d)$ convergence. Recall this implies weak $L^1((0,T)\times \bR^d)$ convergence.}


The following lemma is a key step in our analysis linking the conservative and the disspative phases together. 

\begin{lemma} For any $\varphi \in C^\infty_c([0,T)\times\bR^d)$ we have that 

\begin{align}
\nonumber
   \sum_{n=0}^{N-1} \int_{\bR^d}\Big( \tilde{\rho}^{n+1}_h(x)-\rho^{n+1}_h(x) \Big) \varphi(t_{n+1},x) dx
   =&
\int_{0}^{T}\int_{\bR^d} \rho^{\dag}_h(t,x)\big(\partial_t \varphi(t,x) + b[\rho_h(t-h)](x)\cdot \nabla \varphi (t,x) \big)dxdt  
\\
&  + \int_{\bR^d} \rho^0(x) \varphi(0,x) dx.
\label{e11}
\end{align}
\end{lemma}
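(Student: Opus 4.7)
The plan is to start from the continuity equation satisfied by $\rho^{\dag}_h$ on each subinterval $[t_n,t_{n+1})$ (Lemma \ref{lemma 12}), test it against the compactly supported function $\varphi$, integrate by parts in both time and space, and then telescope the resulting local identities with careful bookkeeping of index shifts.

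First I would fix $n \in \{0,\ldots,N-1\}$. By Lemma \ref{lemma 12}, on $[t_n,t_{n+1})$ the curve $\rho^{\dag}_h$ is a weak solution of the continuity equation \eqref{eq 5} driven by the frozen velocity $b[\rho^n_h]$, with $\rho^{\dag}_h(t_n,\cdot)=\rho^n_h$ and $\rho^{\dag}_h(t_{n+1}^-,\cdot)=\tilde{\rho}^{n+1}_h$. Testing against $\varphi$ on the cylinder $[t_n,t_{n+1}]\times\bR^d$ and integrating by parts (time integration picks up endpoint contributions at $t_n$ and $t_{n+1}$; spatial integration produces no boundary term because $\varphi(t,\cdot)\in C^\infty_c(\bR^d)$) yields the local identity
\begin{equation*}
\int_{t_n}^{t_{n+1}}\!\int_{\bR^d}\rho^{\dag}_h(t,x)\bigl(\partial_t\varphi(t,x)+b[\rho^n_h](x)\cdot\nabla\varphi(t,x)\bigr)\,dx\,dt=\int_{\bR^d}\varphi(t_{n+1},x)\tilde{\rho}^{n+1}_h(x)\,dx-\int_{\bR^d}\varphi(t_n,x)\rho^n_h(x)\,dx.
\end{equation*}

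Next I would sum this identity over $n=0,\ldots,N-1$. The left-hand side assembles into $\int_0^T\int_{\bR^d}\rho^{\dag}_h\bigl(\partial_t\varphi+b[\rho^n_h(t)]\cdot\nabla\varphi\bigr)\,dx\,dt$, where $b[\rho^n_h(t)]$ denotes the piecewise-constant-in-time drift equal to $b[\rho^n_h]$ on $[t_n,t_{n+1})$. On the right-hand side I would reindex the sum $\sum_{n=0}^{N-1}\int\varphi(t_n)\rho^n_h\,dx$ via $m=n-1$, peel off the $n=0$ term which produces the initial-datum contribution $\int_{\bR^d}\rho^0\varphi(0,\cdot)\,dx$, and use $\varphi(T,\cdot)\equiv 0$ (a consequence of $\varphi\in C^\infty_c([0,T)\times\bR^d)$) to absorb the endpoint $n=N-1$ contribution at no cost. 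After these manipulations the right-hand side collapses to $\sum_{n=0}^{N-1}\int_{\bR^d}(\tilde{\rho}^{n+1}_h-\rho^{n+1}_h)\varphi(t_{n+1},\cdot)\,dx-\int_{\bR^d}\rho^0\varphi(0,\cdot)\,dx$.

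Finally, to match the statement I would identify $b[\rho^n_h]$ with $b[\rho_h(t-h,\cdot)]$ on $[t_n,t_{n+1})$: for $n\ge 1$ this is immediate from \eqref{eq interpolation 1}, since $t-h\in[t_{n-1},t_n)$ gives $\rho_h(t-h,\cdot)=\rho^n_h$; for $n=0$ one adopts the natural convention $\rho_h(s,\cdot):=\rho_0$ for $s\in[-h,0)$, or equivalently treats the $n=0$ slab $[0,h)$ of the time integral separately. Rearranging yields \eqref{e11}. The whole argument is a direct summation-by-parts calculation and I do not anticipate a conceptual obstacle; the only point requiring real attention is the index bookkeeping of the telescope together with the correct use of the boundary condition $\varphi(T,\cdot)=0$.
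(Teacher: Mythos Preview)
Your proposal is correct and follows essentially the same logic as the paper's proof: derive the local identity on each slab $[t_n,t_{n+1})$ and telescope, using $\varphi(T,\cdot)=0$ to close the sum. The only cosmetic difference is viewpoint: the paper works in Lagrangian coordinates, writing $\int\tilde\rho^{n+1}_h\varphi(t_{n+1})-\int\rho^n_h\varphi(t_n)=\int\rho^n_h\bigl(\varphi(t_{n+1},X^n_h(h,\cdot))-\varphi(t_n,\cdot)\bigr)$ and applying the chain rule \eqref{eq 84} directly to $\varphi(t,X^n_h(t-t_n,x))$, whereas you stay in Eulerian coordinates and invoke the weak form of the continuity equation from Lemma~\ref{lemma 12}. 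Both routes produce the same local identity; your version is slightly cleaner in that it avoids manipulating the flow map explicitly, while the paper's version makes the role of the characteristics more visible. Your handling of the $n=0$ slab (extending $\rho_h$ by $\rho_0$ on $[-h,0)$) is exactly the implicit convention the paper uses when writing $b[\rho_h(t-h,\cdot)]$ in \eqref{eq 82}.
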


\begin{proof}
Let $n\in \{0,\ldots N-1\}$. First notice that for $t\in[t_n,t_{n+1}]$ by \eqref{eq definition of the flow} and the chain rule, we have
\begin{equation}\label{eq 84}
    \partial_t \big( \varphi(t,X^n_h(t-t_{n},x))\big)=\Big(\partial_t \varphi + b[\rho_n^h] \cdot \nabla \varphi  \Big)(t,X_h^n(t-t_{n},x)).
\end{equation}
Now consider
\begin{align}
& \sum_{n=0}^{N-1} \int_{\bR^d}\Big( \tilde{\rho}^{n+1}_h(x)-\rho^{n+1}_h(x) \Big) \varphi(t_{n+1},x) dx-\int_{\bR^d} \rho^0(x) \varphi(0,x) dx
\nonumber
   \\
  & =  \sum_{n=0}^{N-1} \int_{\bR^d}\Big( \tilde{\rho}^{n+1}_h(x)\varphi(t_{n+1},x)-\rho^{n}_h(x)\varphi(t_{n},x) \Big)  dx
  \label{eq 85}
   \\
& = \sum_{n=0}^{N-1} \int_{\bR^d}\rho^{n}_h(x)\Big( \varphi(t_{n+1},X^n_h(h,x))-\varphi(t_{n},x) \Big)  dx
\nonumber
\\
& =  \sum_{n=0}^{N-1} \int_{t_n}^{t_{n+1}} \int_{\bR^d}\rho^{n}_h(x)\partial_t \big( \varphi(t,X^n_h(t-t_{n},x)) \big)  dxdt
\nonumber
\\
&= \sum_{n=0}^{N-1} \int_{t_n}^{t_{n+1}} \int_{\bR^d}\rho^{n}_h(x)\Big(\partial_t \varphi + b[\rho_h^n] \cdot \nabla \varphi  \Big)(t,X_h^n(t-t_{n},x)) dxdt
\label{eq 83}
\\
&=  \int_{0}^{T} \int_{\bR^d}\rho^{\dagger}_h(t,x)\Big(\partial_t \varphi + b[\rho_h(t-h,\cdot)] \cdot \nabla \varphi  \Big)(t,x) dxdt,
\label{eq 82}
\end{align}
where in \eqref{eq 85} follows since $\varphi$ has compact support, in \eqref{eq 83} we have applied \eqref{eq 84}, and in \eqref{eq 82} we have used the definitions of the interpolations $\rho_h,\rho^\dag_h$.
\end{proof}

Now following the classical procedure we can interpolate across the discrete Euler-Lagrange equations \eqref{eq euler lagrange}.
\begin{lemma}\label{lemma 20}
For any $\varphi \in C^\infty_c([0,T)\times \bR^d)$ we have
\begin{align}
\nonumber
\int_{0}^{T}\int_{\bR^d} \rho^{\dag}_h(t,x)\big(\partial_t \varphi(t,x) 
& + b[\rho(t-h,\cdot)](x)\cdot \nabla \varphi (t,x) \big)dxdt 
\\
& = - \int_{\bR^d} \rho^0(x) \varphi(0,x) dx
- h \sum_{n=0}^{N-1} \delta \cF(\rho^{n+1}_h,A_h \nabla \varphi(\textcolor{black}{t_{n+1}},\cdot))+ O(h)  .
\label{eq 15}
\end{align}

\end{lemma}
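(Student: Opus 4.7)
The plan is to deduce \eqref{eq 15} from the already-proved identity \eqref{e11} by rewriting the discrete displacement sum $\sum_n \int (\tilde\rho^{n+1}_h - \rho^{n+1}_h)\varphi(t_{n+1},\cdot)\,dx$ as a Riemann sum of $\delta\cF(\rho^{n+1}_h, A_h\nabla\varphi(t_{n+1},\cdot))$, modulo an $O(h)$ remainder. The bridge between the two is the discrete Euler--Lagrange equation \eqref{eq euler lagrange} together with a second-order Taylor expansion of $\varphi$, and the resulting error is absorbed using the accumulated transport-cost bound of Lemma \ref{lemma 5}.

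Fix $n\in\{0,\ldots,N-1\}$ and use the optimal plan $\tilde\gamma^{n+1,c}_h\in\Pi(\tilde\rho^{n+1}_h,\rho^{n+1}_h)$ from \eqref{eq optimal coupling 1} to rewrite
\begin{equation*}
\int_{\bR^d} (\tilde\rho^{n+1}_h - \rho^{n+1}_h)\varphi(t_{n+1},x)\,dx = \int_{\bR^{2d}} \bigl[\varphi(t_{n+1},x) - \varphi(t_{n+1},y)\bigr]\,d\tilde\gamma^{n+1,c}_h(x,y).
\end{equation*}
A second-order Taylor expansion of $\varphi(t_{n+1},\cdot)$ around $x$ yields
\begin{equation*}
\varphi(t_{n+1},x) - \varphi(t_{n+1},y) = -\langle y-x,\nabla\varphi(t_{n+1},x)\rangle + R_n(x,y),\qquad |R_n(x,y)|\leq \tfrac12\|\nabla^2\varphi\|_\infty\|x-y\|^2.
\end{equation*}
Applying \eqref{eq euler lagrange} with the time-slice test function $\varphi(t_{n+1},\cdot)\in C_c^\infty(\bR^d)$ then converts the linear term into $h\,\delta\cF(\rho^{n+1}_h,A_h\nabla\varphi(t_{n+1},\cdot))$ (up to the sign convention used); summing over $n$ produces the $\delta\cF$ Riemann sum on the right-hand side of \eqref{eq 15}, plus the aggregated Taylor remainder $\sum_n \int R_n\,d\tilde\gamma^{n+1,c}_h$.

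It remains to show that this aggregated remainder is $O(h)$: by the metric-equivalence bound \eqref{eq 2} we have $\|x-y\|^2 \leq C\,c_h(x,y)$ pointwise, whence
\begin{equation*}
\sum_{n=0}^{N-1}\int_{\bR^{2d}}\|x-y\|^2\,d\tilde\gamma^{n+1,c}_h(x,y) \;\leq\; C\sum_{n=0}^{N-1} W_{c_h}(\tilde\rho^{n+1}_h,\rho^{n+1}_h) \;\leq\; Ch
\end{equation*}
by \eqref{eq 67--eq 10} of Lemma \ref{lemma 5}, the constant depending only on $\|\nabla^2\varphi\|_\infty$ and the fixed data of the scheme. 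Plugging the resulting identity into \eqref{e11} and rearranging gives \eqref{eq 15}. There is no substantial analytic obstacle here, as every ingredient is already in place; the only point requiring real care is sign-tracking -- the minus from the Euler--Lagrange first variation combines with the minus from the Taylor expansion -- together with the minor observation that \eqref{eq euler lagrange} is stated with $\nabla\varphi(x)$ in place of $\nabla\varphi(y)$, a discrepancy that is itself of order $\|x-y\|^2$ and hence harmlessly absorbed into the $O(h)$ remainder.
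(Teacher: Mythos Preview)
Your proposal is correct and follows essentially the same route as the paper: a second-order Taylor expansion of $\varphi(t_{n+1},\cdot)$ on the optimal plan $\tilde\gamma^{n+1,c}_h$, substitution of the Euler--Lagrange equation \eqref{eq euler lagrange}, control of the quadratic remainder via \eqref{eq 2} and Lemma \ref{lemma 5}, and finally insertion into \eqref{e11}. The only cosmetic difference is that the paper Taylors around the $y$-variable (the $\rho^{n+1}_h$ marginal) so that the linear term matches the Euler--Lagrange equation directly, whereas you expand around $x$ and then absorb the $\nabla\varphi(x)$ versus $\nabla\varphi(y)$ discrepancy into the $O(h)$ remainder; both are equally valid.
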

\begin{proof}
Let $n\in \{0,\ldots ,N-1\}$.
By Taylor's Theorem we have
\begin{align}
\nonumber
\int_{\bR^d}\Big( \rho^{n+1}_h(x)-\tilde{\rho}^{n+1}_h(x) \Big) \varphi( \textcolor{black}{t_{n+1}},x) dx
& 
=\int_{\bR^{2d}} \Big( \varphi( \textcolor{black}{t_{n+1}},y)-\varphi( \textcolor{black}{t_{n+1}},x) \Big) d\tilde{\gamma}^{n+1,c}_h(x,y) 
\\
&=
\int_{\bR^{2d}} \Big\langle y-x , \nabla \varphi( \textcolor{black}{t_{n+1}},y) \Big\rangle d\tilde{\gamma}^{n+1,c}_h(x,y)+\kappa_n( \textcolor{black}{t_{n+1}}).
\label{e9}
\end{align}
By Lemma \ref{lemma the cost function i.e matrix A} we can bound the remainder term $\kappa_n$, namely, 
\begin{equation}\label{e12}
    |\kappa_n(t)|\leq C\sup_{t\in[0,T),x\in \bR^d} \textcolor{black}{\|\nabla^2 \varphi \|} \int_{\bR^{2d}} \|x-y\|^2 d\tilde{\gamma}^{n+1,c}_h(x,y)\leq C\sup_{t\in[0,T),x\in \bR^d}\textcolor{black}{\|\nabla^2 \varphi \|} \int_{\bR^{2d}} c_h(x,y) d\tilde{\gamma}^{n+1,c}_h(x,y).
\end{equation}
Using \eqref{e9} in combination with the Euler-Lagrange equation \eqref{eq euler lagrange} yields the identity
\begin{align}
\nonumber
\int_{\bR^d}\Big( \rho^{n+1}_h(x)-\tilde{\rho}^{n+1}_h(x) \Big) \varphi( \textcolor{black}{t_{n+1}},x) dx=&\kappa_n( \textcolor{black}{t_{n+1}}) -h \delta \cF(\rho^{n+1}_h,A_h \nabla \varphi( \textcolor{black}{t_{n+1}}, \cdot)).
\end{align}
Summing the previous expression over $n=0,\ldots, N-1$ gives
\begin{align}
\sum_{n=0}^{N-1}\int_{\bR^d}\Big( \rho^{n+1}_h(x)-\tilde{\rho}^{n+1}_h(x)\Big) \varphi( \textcolor{black}{t_{n+1}},x) dx=& O(h) - h\sum_{n=0}^{N-1} \delta \cF(\rho^{n+1}_h,A_h \nabla \varphi( \textcolor{black}{t_{n+1}},\cdot)),
\label{e10}
\end{align}
where we have combined \eqref{e12} with Lemma \ref{lemma 5} to conclude $| \sum_{n=0}^{N-1} \kappa_n( \textcolor{black}{t_{n+1}}) | \leq C h$. Finally, using \eqref{e11} on the left hand side of \eqref{e10}, multiplying through by $-1$, delivers the sought result.
\end{proof}
We are now ready to prove the main theorem, Theorem \ref{Theorem Main}.

\begin{proof}[\textit{Proof of Theorem \ref{Theorem Main}}]
Recall the convergence result of Lemma \ref{lemma 9}. To prove Theorem \ref{Theorem Main} we need only to argue that the limit $h\to 0$, $N\to \infty$ in \eqref{eq 15} can be taken.  Clearly the error term $O(h)$ in \eqref{eq 15} goes to zero (as $h\to 0$), and for any $\varphi \in C^\infty_c([0,T),\bR^d)$ we have
$$
\lim_{h\to 0}\int_0^T\int_{\bR^d} \rho^\dag_h(t,x) \partial_t \varphi (t,x) dxdt =\int_0^T\int_{\bR^d} \rho(t,x) \partial_t \varphi (t,x) dxdt.
$$
We now address the remaining terms of \eqref{eq 15}: the free energy and the divergence free part. We start with the free energy term $\delta \cF$. 
Note that we can write 
\begin{align}
\nonumber
  h\sum_{n=0}^{N-1} & \delta \cF\big(\rho^{n+1}_h,A_h \nabla \varphi(t_{n+1},\cdot)\big) 
 \\
 &=
   \sum_{n=0}^{N-1} \int_{t_n}^{t_{n+1}} 
 \Big( \int_{\bR^d} \rho^{n+1}_h(x) \big( A_h \nabla \varphi (t_{n+1},x) \cdot  \nabla f(x) \big) dx 
 \nonumber
\textcolor{black}{-} \int_{\bR^d} \rho^{n+1}_h(x) \divv \big(A_h \nabla \varphi (t_{n+1},x) \big) dx \Big)dt
\nonumber
\\
&=  \sum_{n=0}^{N-1} \int_{t_n}^{t_{n+1}} \Big( \int_{\bR^d} \rho_h(t,x)\big( A_h \nabla \varphi (t_{n+1},x) \cdot  \nabla f(x) \big)  dx 
\textcolor{black}{-} \int_{\bR^d} \rho_h(t,x) \divv \big(A_h \nabla \varphi (t_{n+1},x) \big) dx \Big)dt.
\label{eq 24}
\end{align}
Consider the first term on the right hand side of  \eqref{eq 24} (the second term can be dealt with in a similar manner). Adding and subtracting $A_h \nabla \varphi(t,x) $ and $A \nabla \varphi(t,x)$, we get 
\begin{align}
\nonumber
\sum_{n=0}^{N-1} 
&
\int_{t_{n}}^{t_{n+1}}\int_{\bR^d}\rho_h^{n+1}\Big( A_h \nabla \varphi (t_{n+1},x) \cdot  \nabla f(x) \Big)  dxdt
\\ \nonumber
= 
&
\sum_{n=0}^{N-1}\int_{t_{n}}^{t_{n+1}}\int_{\bR^d}\Bigg(   \rho_h \Big( A \nabla \varphi \cdot \nabla f \Big) (t,x) 
+ \rho_h \Big( \big(A_h-A\big) \nabla \varphi \cdot \nabla f \Big) (t,x) 
\\
\label{eq 48}
& \hspace{4cm} 
+ \rho_h(t) \Big( A_h \big( \nabla \varphi(t_{n+1}) - \nabla \varphi(t) \big) \cdot \nabla f \Big) (x) \Bigg)dxdt.
\end{align}
Then, as $h\to 0$, the first term tends to $\int_0^T\int_{\bR^d} \rho \big( A \nabla \varphi \cdot \nabla f \big) (t,x) dxdt$ by the weak  $L^1([0,T]\times \bR^d)$ convergence, the second term tends to zero by Cauchy Schwartz and the fact that $\lim_{h\to 0}\|A_h-A\|=0$ and again the weak $L^1([0,T]\times \bR^d)$ convergence. The third term in \eqref{eq 48} also tends to zero, since 
\begin{align*}
& \lim_{h\to 0}\sum_{n=0}^{N-1}\int_{t_{n}}^{t_{n+1}}\int_{\bR^d}
\rho_h(t) \Big( A_h \big( \nabla \varphi(t_{n+1}) - \nabla \varphi(t) \big) \cdot \nabla f \Big)  (x) dxdt
\\
& \leq\lim_{h\to 0} C \|A_h\| \sup_{x\in \bR^d}\|\nabla f(x)\| \sup_{[u_h,r_h]\subset [0,T), |u_h-r_h|\leq h}\sup_{s\in [u_h,r_h],x\in \bR^d }\| \nabla \varphi(r_h,x) - \nabla \varphi(s,x) \|
\int_{0}^{T}\int_{\bR^d}
\rho_h(t,x)  dxdt=0,
\end{align*}
where we have used that $\|A_h\|, \sup\|\nabla f\| \leq C$, that $\rho_h$ is a probability density, and that $\nabla \varphi$ is uniformly continuous. 

Lastly, we address the divergence free term in \eqref{eq 15}. Adding and subtracting $\rho^\dag_h b[\rho^\dag_h]\cdot \nabla \varphi $ gives
    \begin{align}
    \nonumber
    & \int_{0}^{T}\int_{\bR^d} \rho^{\dag}_h(t,x) b[\rho_h(t-h,\cdot)](x)\cdot \nabla \varphi (t,x) dxdt 
    \\
    &= \int_{0}^{T}\int_{\bR^d} \rho^{\dag}_h(t,x) \Big(b[\rho_h(t-h,\cdot)](x)-b[\rho^\dag_h(t,\cdot)] \Big)\cdot \nabla \varphi (t,x) dxdt
    +\int_{0}^{T}\int_{\bR^d} \rho^{\dag}_h(t,x) b[\rho^\dag_h(t,\cdot)] \cdot \nabla \varphi (t,x) dxdt. 
    \label{eq 17}
    \end{align}
The first term in \eqref{eq 17} converges to zero as $h\to 0$ since
    \begin{align}
      \big|  \int_{0}^{T}\int_{\bR^d} \rho^{\dag}_h(t,x) \Big(b[\rho_h(t-h,\cdot)](x)- & b[\rho^\dag_h(t,\cdot)](x) \Big)\cdot \nabla \varphi (t,x) dxdt \big|
      \nonumber
        \\
        \leq& C\int_0^T \Big(\int_{\bR^d} \rho^\dag_h(t,x)\|b[\rho_h(t-h,\cdot)](x)-b[\rho^\dag_h(t,\cdot)](x)\|^2dx\Big)^{\frac{1}{2}}dt
        \label{eq 18}
        \\
        \leq& C\int_0^T W_2(\rho_h(t-h,\cdot),\rho^\dag_h(t,\cdot))dt
        \label{eq 19}
        \\
        =& C\sum_{n=0}^{N-1} \int_{t_{n}}^{t_{n+1}} W_2(\rho^n_h,\rho^\dag_h(t,\cdot))dt
         \nonumber
        \\
        \leq& CT h,
        \label{eq 20}
    \end{align}
where in \eqref{eq 18} we have used the Cauchy Schwartz and Jensen's inequality, in \eqref{eq 19} we have used Assumption \eqref{eq assump 3}, and in \eqref{eq 20} we have used \eqref{eq 12} and the bounded moments result of Lemma \ref{lemma 8}. The second term on the right hand side of \eqref{eq 17} has already the desired convergence, indeed consider 
    \begin{align}
      \Big|  \int_{0}^{T}\int_{\bR^d}& \Big(\rho^{\dag}_h(t,x)  b[\rho^\dag_h(t,\cdot)](x) - \rho(t,x) b[\rho(t,\cdot)](x) \Big) \cdot \nabla \varphi (t,x) dxdt \Big| 
      \nonumber
      \\ 
      \leq& C  \int_{0}^{T} \Big(\int_{\bR^d} \rho^{\dag}_h(t,x) \| b[\rho^\dag_h(t,\cdot)](x) -  b[\rho(t,\cdot)](x) \|^2 dx \Big)^{\frac{1}{2}}dt 
      \label{eq 21}
      \\
      &\hspace{2cm}+\Big|  \int_{0}^{T}\int_{\bR^d} \big(\rho(t,x) - \rho^{\dag}_h(t,x) \big) b[\rho(t,\cdot)](x) \cdot \nabla \varphi (t,x) dxdt \Big|
      \nonumber
        \\
        \leq& 
        C T \sup_{t\in[0,T]} W_2(\rho^\dag_h(t,\cdot),\rho(t,\cdot)) +
        \Big|  \int_{0}^{T}\int_{\bR^d} \big(\rho(t,x) - \rho^{\dag}_h(t,x) \big) b[\rho(t,\cdot)](x) \cdot \nabla \varphi (t,x) dxdt \Big|,
        \label{eq 22}
    \end{align}
    where in \eqref{eq 21} we have added and subtracted $\rho^\dag_h b[\rho]$, used Cauchy Schwartz and Jensen's inequality, and in \eqref{eq 22} we used again Assumption \eqref{eq assump 3}. The two terms in \eqref{eq 22} go to zero from the convergence of $\rho^\dag_h$ in Lemma \ref{lemma 9} and that $b[\rho(t,\cdot)] \cdot \nabla \varphi \in L^\infty((0,T)\times \bR^d)$.
    
Having the above estimates, by passing to the limit $h\rightarrow 0$ in \eqref{eq 15} we obtain precisely the weak formulation \eqref{def: weak formulation general PDE} of the evolutionary equation \eqref{eq main PDE}. This completes the proof of Theorem \ref{Theorem Main}. 
\end{proof}

%
%
%
%

\section{Entropy Regularised Scheme}\label{section entropy regularisation}
From a computational point of view, implementing a JKO-type scheme \eqref{eq discrete general grad flow} directly is expensive since at each iteration it requires the resolution of an optimal transportation problem. The entropic regularisation technique developed in \cite{cuturi2013sinkhorn} overcomes this difficulty by transforming the transport problem into a strictly convex problem that can be solved more efficiently with matrix scaling algorithms such as the Sinkhorn’s algorithm \cite{KnoppSinkhorn1967}. In recent years, this regularisation technique has found applications in a variety of domains such as machine learning, image processing, graphics and biology. In particular, several works have developed entropic regularisation schemes for solving evolutionary equations, such as nonlinear diffusion equations  \cites{carlier2017convergence,peyre2015entropic}, flux-limited gradient flows \cite{matthes2020discretization} and a tumour growth model of Hele-Shaw type \cite{DiMario2020}. We refer the reader to the recent monograph \cite{peyre2019computational} for a great detailed account of the entropic regularisation technique. 

In this section, we provide an entropy regularised version of the scheme introduced in Section \ref{section introduction}. The regularised scheme, presented below, differs only in that we have penalised the weighted Wasserstein distance by an entropy term. The convergence of this new scheme is stated in Theorem \ref{theorem reg-main}, the proof of which is sketched since it only differs slightly to that of  Theorem \ref{Theorem Main}. The results and techniques of this section are similar to those appearing in \cites{carlier2017convergence,adams2021entropic}. The following assumption introduces a 
 theoretical constraint on the scaling of the time step and strength of entropic regularisation. It ensures that the error made by the regularisation goes to zero sufficiently fast. 

\begin{assumption}[The regularisation's scaling parameters]
\label{assumption on scaling}
Take three sequences $\{N_k\}_{k\in \bN}\subset \bN$, $\{\varepsilon_k\}_{k\in \bN}\subset \bR_+$, and $\{h_k\}_{k\in \bN} \subset \bR_+$, which, for any $k\in \bN$, abide by the following scaling

\begin{equation}\label{eq: assumption on scaling}
   h_k N_k = T,~~~\text{and}~~~ 0< \varepsilon_k \leq \varepsilon_k|\log \varepsilon_k|\leq C h_k^2,
\end{equation}
and are such that $h_k,\varepsilon_k\to 0$ and $N_k\to \infty$ as ${k\to \infty}$.
\end{assumption}

\paragraph{An entropic regularisation of the operator-splitting scheme:} Let the sequences  $\{h_k\}_{k\in \bN},\{\varepsilon_k\}_{k\in \bN},\{N_k\}_{k\in \bN}$, satisfy Assumption \ref{assumption on scaling}. \textcolor{black}{Throughout the section, for the sake of notational clarity, we have mostly suppressed the dependence of $\varepsilon,h$ and $N$ on $k$}. Let $\cF(\rho_0)<\infty$, and set $\rho^0_k=\tilde{\rho}^0_k=\rho^0$. Let $n\in\{0,\ldots,N_k-1\}$. Given $\rho^n_k$ we find $\rho^{n+1}_k$ as follows, first introduce the push forward of $\rho^n_k$ by the Hamiltonian flow as 
\begin{equation}\label{eq reg-scheme push forward}
    \tilde{\rho}^{n+1}_k=X^n_k(h,\cdot)_{\#}\rho^n_k,
\end{equation}
where $X_k^n$ solves
\begin{equation}\label{eq definition of the reg-flow}
\begin{cases}
&\partial_t X^n_k = b[\rho_k^n]\circ X^n_k,
\\
& X^n_k(0,\cdot)=\text{id}.
\end{cases}
\end{equation}
Next define $\rho^{n+1}_k$ as the minimiser of the regularised JKO type descent step
\begin{equation}
\label{eq: regularized scheme}
      \rho^{n+1}_{k}= \text{argmin}_{\rho \in \cP^r_2(\bR^d)}\big\{\frac{1}{2 h}W_{c_{h},\varepsilon}(\tilde{\rho}^{n+1}_{k},\rho)+\cF(\rho) \big\},
\end{equation}
where $ W_{c_{h},\varepsilon}$ is the regularised weighted Wasserstein 
\begin{equation}
\label{eq: regularized cost functional}
    W_{c_{h},\varepsilon}(\mu,\nu):=\inf_{\gamma\in \Pi(\mu,\nu)}\Big\{\int_{\bR^{2d}} c_{h}(x,y)d\gamma(x,y)+\varepsilon H(\gamma)\Big\},
\end{equation}
for the same cost function defined in \eqref{eq weighted wasserstein}. \textcolor{black}{Let $\tilde{\gamma}^{n,c}_k$ be the optimal plan associated to $W_{c_{h},\varepsilon}(\tilde{\rho}^{n}_{k},\rho^{n}_{k})$}, and define the interpolations $\rho_k,\tilde{\rho}_k,\rho^\dag_k$ analogously to the un-regularised case but now with respect to the new sequences $\{\rho^{n}_{k}\}_{n=0}^{N}$ and  $\{\tilde{\rho}^{n}_{k}\}_{n=0}^{N}$.

The convergence of the above entropic regularised scheme is established in the next result.
\begin{theorem}\label{theorem reg-main}
Assume that $f,b$ and $A$ satisfy Assumption \ref{assumption : main assumptionso on drift and diffusion matrix}, and let the sequences $\{h_k\}_{k\in \bN},\{\varepsilon_k\}_{k\in \bN},\{N_k\}_{k\in \bN}$ satisfy Assumption \ref{assumption on scaling}. 
Let $\rho_0\in \cP^r_2(\bR^d)$ satisfy $\cF(\rho_0)<\infty$. Let $\{\rho^{n}_{k}\}_{n=0}^{N_{k}},\{\tilde{\rho}^{n}_{k}\}_{n=0}^{N_{k}}$ to be the solution of the regularised scheme \eqref{eq reg-scheme push forward}-\eqref{eq: regularized scheme}, with interpolations  $\rho_{k},\tilde{\rho}_k,\rho^\dag_k$ as defined above. 

Then
\begin{enumerate}[label=(\roman*)]
    \item \begin{equation}\label{eq reg-convergence 1}
    \rho_{k},\tilde{\rho}_k,\rho^\dag_k
  \underset{k\to \infty}{\longrightarrow} \rho \quad\text{in}\quad L^1((0,T)\times \bR^d).
\end{equation}
\item Moreover, there exists a map  $[0,T]\ni t \mapsto \rho(t,\cdot)$ in $\cP_2^r(\bR^d)$ such that 
\begin{equation}\label{eq reg-convergence 2}
    \sup_{t\in[0,T]} \max \Big\{\, W_2\big( \rho_{k}(t,\cdot),\rho(t,\cdot)\big),\   W_2\big(\tilde{\rho}_k(t,\cdot),\rho(t,\cdot)\big),\  W_2\big(\rho^\dag_k(t,\cdot),\rho(t,\cdot)\big)\, \Big\}
    \underset{k\to \infty}{\longrightarrow} 0,
\end{equation}
\end{enumerate}
where the limits $\rho$ appearing above are weak solutions of the evolution equation \eqref{eq main PDE} in the  sense of Definition \ref{def: weak formulation general PDE}.
\end{theorem}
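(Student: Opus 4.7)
The plan is to adapt the proof of Theorem \ref{Theorem Main} to the regularised setting, following the same four-step blueprint (well-posedness, discrete Euler-Lagrange equations, a priori estimates, passage to the limit), and to show that the scaling $\varepsilon_k|\log \varepsilon_k| \leq C h_k^2$ in Assumption \ref{assumption on scaling} exactly compensates for the errors introduced by the entropic term. Well-posedness of \eqref{eq: regularized scheme} is cheaper than in the unregularised case, because adding $\varepsilon H(\gamma)$ to the Kantorovich functional makes it strictly convex in $\gamma$; the direct method applied as in Proposition \ref{lemma : well-posedness of jko scheme}, using the lower semi-continuity of $H$ (see \eqref{eq appendix l.s.c of entropy}) and the coercivity provided by $c_h$, yields a unique minimiser $\rho^{n+1}_k \in \cP^r_2(\bR^d)$ and an optimal plan $\tilde{\gamma}^{n+1,c}_k$.

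The discrete Euler-Lagrange equation is derived by perturbing $\rho^{n+1}_k$ along the flow $\Phi_s = \mathrm{id} + s A_h \nabla \varphi$ and using that $H(\Phi_s)_\#\rho$ is differentiable in $s$: the result has the same form as \eqref{eq euler lagrange}, namely
\begin{equation*}
0 = \frac{1}{h}\int_{\bR^{2d}}\langle y-x,\nabla\varphi(x)\rangle\,d\tilde{\gamma}^{n+1,c}_k(x,y) + \delta \cF(\rho^{n+1}_k, A_h \nabla \varphi),
\end{equation*}
since the entropy term $\varepsilon H(\gamma)$ depends on $\gamma$ only through its marginals at first order when one perturbs the second marginal (the calculation is identical to the one carried out in \cite{carlier2017convergence,adams2021entropic}, and one uses that along $(\mathrm{id}\times \Phi_s)_\#\gamma$ the change in entropy is $-\varepsilon\int \rho^{n+1}_k \mathrm{div}(A_h\nabla\varphi)$, which matches the existing expression for $\delta \cF$ via the entropy contribution already inside $\cF$). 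Next come the a priori estimates. The key intermediate step replaces Lemma \ref{lemma a sum of transport costs} by
\begin{equation*}
\sum_{i=0}^{n-1} W_{c_h,\varepsilon}(\tilde{\rho}^{i+1}_k, \rho^{i+1}_k) \leq C h\Bigl(1 + \cF(\rho_0) + (M(\rho^n_k)+1)^{\alpha}\Bigr) + C N \varepsilon|\log \varepsilon|,
\end{equation*}
where the additional $N\varepsilon|\log\varepsilon|$ arises from comparing the regularised cost against the unregularised one using the standard bound $|W_{c_h,\varepsilon}(\mu,\nu) - W_{c_h}(\mu,\nu)| \leq C\varepsilon|\log\varepsilon|(1+M(\mu)+M(\nu))$ (see \cite{carlier2017convergence}). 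Since $N = T/h$ and $\varepsilon|\log \varepsilon| \leq Ch^2$, this extra term is itself $O(h)$, and the induction in Lemma \ref{lemma 3} carries through verbatim to yield uniform bounds on $M(\rho^n_k), \cF(\rho^n_k), H_+(\rho^n_k)$, and hence on the interpolations as in Lemma \ref{lemma 8}. The analogue of Lemma \ref{lemma 5} then follows, giving $\sum_n W_2^2(\tilde{\rho}^{n+1}_k,\rho^{n+1}_k) \leq Ch$ after invoking \eqref{eq 2} once more.

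With these estimates in hand, the convergence of interpolations (Lemma \ref{lemma 9}) is obtained by exactly the same Arzel\`a-Ascoli argument on geodesic concatenations, producing a limit curve $t\mapsto \rho(t,\cdot) \in \cP_2^r(\bR^d)$ satisfying \eqref{eq reg-convergence 2}; the weak $L^1$ convergence \eqref{eq reg-convergence 1} then follows from equi-integrability supplied by the uniform positive-entropy bound and \cite{santambrogio2015optimal}*{Box 8.2}. Finally, the interpolation identity \eqref{e11} and the Taylor expansion leading to \eqref{eq 15} remain valid word-for-word (they involve only the transport structure and the Euler-Lagrange equation, both of which are unchanged), so the proof of Theorem \ref{Theorem Main} applies verbatim to pass to the limit $k\to\infty$ in the resulting equation, identifying $\rho$ as a weak solution in the sense of Definition \ref{def: weak formulation general PDE}.

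The main obstacle I anticipate is the competitor/comparison step: to obtain the telescoping estimate for the regularised cost I must compare $W_{c_h,\varepsilon}(\tilde{\rho}^{n+1}_k, \rho^{n+1}_k)$ with a suitable competitor energy, and the entropy of the coupling introduces a quantity $\varepsilon H(\tilde{\gamma}^{n+1,c}_k)$ which is not a priori sign-definite. Controlling it requires bounding $H(\tilde{\gamma}^{n+1,c}_k)$ in terms of $\varepsilon|\log\varepsilon|$ times functions of the marginals' moments and entropies, which in turn depends crucially on the positive-definiteness of $A_h$ (ensured by $A_h = A + hI$) so that the Gibbs kernel $e^{-c_h/\varepsilon}$ is integrable. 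Once this bound is secured in the same spirit as in \cite{carlier2017convergence,adams2021entropic}, the scaling $\varepsilon|\log\varepsilon| \leq Ch^2$ does the rest, and the whole chain of estimates and convergence arguments used for Theorem \ref{Theorem Main} closes.
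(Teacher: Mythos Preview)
Your proposal contains a concrete error in the derivation of the discrete Euler--Lagrange equation. You claim the regularised equation coincides with \eqref{eq euler lagrange} because the contribution from perturbing $\varepsilon H(\gamma)$, namely $-\varepsilon\int \rho^{n+1}_k\,\divv(A_h\nabla\varphi)$, ``matches the existing expression for $\delta\cF$ via the entropy contribution already inside $\cF$''. This is wrong: the entropy inside $\cF$ is the \emph{marginal} entropy $H(\rho)$ and its variation enters $\delta\cF$ with coefficient $1$, whereas the \emph{plan} entropy $H(\gamma)$ carries the prefactor $\varepsilon/(2h)$. These are two separate first-order contributions and do not cancel or combine. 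The correct regularised Euler--Lagrange equation reads
\[
0=\frac{1}{h}\int_{\bR^{2d}}\langle x-y,\nabla\varphi(y)\rangle\,d\tilde{\gamma}^{n+1,c}_k(x,y)+\delta\cF(\rho^{n+1}_k,A_h\nabla\varphi)-\frac{\varepsilon}{2h}\int_{\bR^d}\rho^{n+1}_k(y)\,\divv\big(A_h\nabla\varphi(y)\big)\,dy,
\]
with a genuine extra term. Consequently your claim that ``\eqref{eq 15} remains valid word-for-word'' and that ``the proof of Theorem \ref{Theorem Main} applies verbatim'' is not correct: the analogue of \eqref{eq 15} carries the additional summand $\sum_{n}\frac{\varepsilon}{2h}\int_{t_n}^{t_{n+1}}\int\rho^{n+1}_k\,\divv(A_h\nabla\varphi(t_n,\cdot))$, and one must argue separately that it vanishes in the limit. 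This is where the scaling in Assumption~\ref{assumption on scaling} is used a second time: since $\varepsilon_k/h_k\to 0$, this term goes to zero. The gap is not fatal, but it is a real missing step rather than a stylistic difference.

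On the a priori estimates your anticipated obstacle is exactly the point, and the paper resolves it by the concrete mollified competitor $\gamma_\varepsilon(x,y)=\tilde\rho^{n+1}_k(x)\,G_\varepsilon(y-x)$ with $G_\varepsilon(\cdot)=\varepsilon^{-2d}G(\cdot/\varepsilon^2)$, $G\in C^\infty_c$, rather than by invoking an abstract bound $|W_{c_h,\varepsilon}-W_{c_h}|\le C\varepsilon|\log\varepsilon|(1+M(\mu)+M(\nu))$. Your route would need that bound to hold with constants depending only on the moments/entropy of $\tilde\rho^{n+1}_k$ (since $M(\rho^{n+1}_k)$ is not yet controlled at that stage); the mollified competitor achieves precisely this, yielding directly a bound on the pure transport cost $(c_h,\tilde\gamma^{n+1,c}_k)$ --- which is what is actually needed to control the Taylor remainder $\kappa_n$ --- rather than on the regularised functional $W_{c_h,\varepsilon}$.
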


The proof does not change much from that of Theorem \ref{Theorem Main}, so we provide only a sketch, highlighting the parts that are different. 

\begin{proof}[Proof of Theorem \ref{theorem reg-main}]

Let $n\in\{0,\ldots,N-1\}$.

  \textit{The well-posedness.} The well-posedness of the regularised scheme relies on the well-posedness of the minimisation problem \eqref{eq: regularized cost functional}, the proof of which can be found in \cite{adams2021entropic}*{Section 3}. 

 \textit{A priori estimates.} In the proof of Theorem $\ref{Theorem Main}$ we compare the quantity $\frac{1}{2h}W_{c_{h}}(\tilde{\rho}_h^{n+1},\rho_h^{n+1})+\cF(\rho_h^{n+1})$ against $\frac{1}{2h}W_{c_h}(\tilde{\rho}_h^{n+1},\tilde{\rho}_h^{n+1})+\cF(\tilde{\rho}_h^{n+1})$. 
The term $W_{c_h}(\tilde{\rho}_h^{n+1},\tilde{\rho}_h^{n+1})$ is zero, and hence we end up with a control of $W_{c_{h}}(\tilde{\rho}_h^{n+1},\rho^{n+1}_h)$ in terms of the free energy. 
However, since $W_{c_{h},\varepsilon}(\tilde{\rho}^{n+1}_k,\tilde{\rho}^{n+1}_k)\neq 0$, we need to select a new distribution to compare the performance of $\rho^{n+1}_k$ against. 
We judiciously choose a distribution $\rho_\epsilon$ (with optimal plan $\gamma_\epsilon)$ as to make the cost of transporting mass \textcolor{black}{zero, i.e as $\epsilon\to 0$ we aim to have $(c_h,\gamma_\epsilon)\to 0$}.  We construct such a candidate distribution $\rho_\varepsilon$ in the following way, let $G\in C^\infty_c (\mathbb{R}^d)$ be a probability density, such that $M(G)=1$ and $H(G) <\infty$. Define $G_\varepsilon(\cdot):=\varepsilon^{-2d}G(\frac{\cdot}{\varepsilon^2})$, and 
\begin{equation*}
   \gamma_\varepsilon(x,y) 
   := 
   \tilde{\rho}^{n+1}_k(x) G_\varepsilon \big(y-x\big),
\end{equation*}
as the joint distribution with first marginal $\tilde{\rho}^{n+1}_k$, and second marginal 
$\rho_\varepsilon(y) :=\int \gamma_\varepsilon(x,y) dx$. 
One can then calculate/express  $H(\gamma_\varepsilon),\cF(\gamma_\varepsilon),(c_{h},\gamma_\varepsilon)$ in terms of $\tilde{\rho}^{n+1}_k$ (see \cite{adams2021entropic}*{Lemma 4.3}). Comparing the performance of $\rho^{n+1}_k$ against $\rho_\varepsilon$ in \eqref{eq: regularized scheme} we get, making use of the scaling \eqref{eq: assumption on scaling} and that $H(\tilde{\gamma}^{n+1,c}_k)\geq H(\rho^{n+1}_k)+H(\tilde{\rho}^{n+1}_k)$, the following inequality
\begin{equation}\label{eq 27}
    (c_h,\tilde{\gamma}^{n+1,c}_k) \leq Ch^2\Big(M(\tilde{\rho}^{n+1}_k)+1\Big)-\varepsilon H(\rho^{n+1}_k)+2h\Big( \cF(\tilde{\rho}^{n+1}_k)- \cF(\rho^{n+1}_k) \Big).
\end{equation}
We are able to obtain bounded 2nd moments, energy, and entropy estimates in an almost identical fashion as to Lemma \ref{lemma 3}, specifically in \eqref{z13} we use $(c_h,\tilde{\gamma}^{i+1,c}_k)$ in place of $W_{c_h}(\tilde{\rho}^{i+1},\rho^{i+1})$, and apply \eqref{eq 27}. Moreover, summing \eqref{eq 27} and using such estimates yields the bound 
\begin{equation}\label{eq 28}
    \sum_{n=0}^{N-1}(c_h,\tilde{\gamma}^{n+1,c}_k) \leq C h.
\end{equation}
It is easy to conclude that we also have 
\begin{equation}
     \sum_{n=0}^{N-1} W_2^2(\tilde{\rho}^{n+1}_k,\rho^{n+1}_k)   \leq C h
     \quad \textrm{and}\quad 
     \sum_{n=0}^{N-1} W_2^2(\rho^{n}_k,\tilde{\rho}^{n+1}_k)   \leq C h.
\end{equation}

 \textit{The Discrete Euler-Lagrange Equation and concluding the convergence.} 
Since $\rho^{n+1}_k$ solves the minimisation problem \eqref{eq: regularized scheme}, the associated discrete Euler-Lagrange equation reads, for any  $\varphi \in C_c^\infty(\bR^d)$, 
\begin{equation}
  0=\frac{1}{h}\int_{\bR^{2d}} \Big\langle x-y, \nabla\varphi(y)  \Big\rangle  d\tilde{\gamma}^{n+1,c}(x,y) +\delta \cF(\rho^{n+1}_k,A_h \nabla \varphi)-\frac{\varepsilon}{2h}\int_{\bR^d}\rho^{n+1}_k(y)\divv\big(A_h \nabla \varphi(y)  \big)dy.
  \label{eq reg-euler lagrange}
\end{equation}
Therefore, the analogous result to Lemma \ref{lemma 20} is
\begin{align}
& \int_{0}^{T}\int_{\bR^d} \rho^{\dag}_k(t,x)\big(\partial_t \varphi(t,x) +  b[\rho(t,\cdot)](x)\cdot \nabla \varphi (t,x) \big)dxdt  \nonumber
\\& = - \int_{\bR^d} \rho^0(x) \varphi(0,x) dx
- \sum_{n=0}^{N-1} \Big(  h \delta \cF(\rho^{n+1}_k,A_h \nabla \varphi(t_n,\cdot)) +\frac{\varepsilon}{2h}\int_{t_n}^{t_{n+1}}\int_{\bR^d}\rho^{n+1}_k(y)\divv\big(A_h \nabla \varphi(t_{n},y) \big)dy   \Big) +O(h).
\label{eq 31}
\end{align}
The convergence claimed in \eqref{eq reg-convergence 1} and \eqref{eq reg-convergence 2} follows by a priori estimates identical to those of Lemma \ref{lemma 9}. Hence to complete the proof of Theorem \ref{theorem reg-main} we need only to deal with the term appearing from the regularisation 
\begin{align*}
    \sum_{n=0}^{N-1}  \frac{\varepsilon}{2h} 
    \int_{t_{n}}^{t_{n+1}}\int_{\bR^d}\rho^{n+1}_k(y)\divv\big(A_h \nabla \varphi(t_{n},y) \big)dy,
\end{align*}
and show it goes to zero as $\varepsilon,h \to 0$. This is clear by a similar argument to that in the end of the proof of Section \ref{section convergence}: using the convergence \eqref{eq reg-convergence 1} of $\rho_k$ and the scaling \eqref{eq: assumption on scaling} which implies that $\frac{\varepsilon}{h}\to 0$.

\end{proof}

%
%
%
%

\section{Examples}\label{section examples}
In this section, we present four concrete examples of evolutionary equations that can all be written in the general form \eqref{eq main PDE}: the Vlasov-Fokker-Planck equation,  a degenerate non-linear diffusion equation of Kolmogorov-type, the linear Wigner FPE, the regularised Vlasov-Poisson FPE, and a generalised Vlasov-Langevin equation. 
\medskip

\textbf{Applicability of Theorems \ref{Theorem Main} and  \ref{theorem reg-main} to the examples.} In all these examples, we will show explicitly the (non-local) vector field $b$ and the diffusion matrix $A$. Assuming the drift vector fields and the diffusion matrix are such that Assumption \ref{assumption : main assumptionso on drift and diffusion matrix} is satisfied, then Theorem \ref{Theorem Main} and/or of Theorem \ref{theorem reg-main} provides novel operator-splitting variational schemes for solving these evolutionary equations. It will be clear that from the explicit formulas that $A$ is symmetric positive semi-definite and $b[\rho]$ is divergence-free. It remains to consider the first and second conditions in \eqref{eq assump 4}, which are assumptions on the growth and regularity of the vector field, and \eqref{eq assump 3}. In all examples, the vector field $b[\rho](x)$ consists of a local part and a non-local part, where the non-local part is a convolution of $\rho$ with an interaction kernel $K$, namely of the form

$$
b_{\text{non-local}}[\rho](x):=(K * \rho)(x)=\int K(x-x')d\rho(x').
$$
Assumption \ref{assumption : main assumptionso on drift and diffusion matrix} is satisfied for instance when the local part is Lipschitz, and the kernel $K$ is uniformly bounded, Lipschitz, and differentiable. Under this assumption, \eqref{eq assump 4} and \eqref{eq assump 3} are straightforward for the local part. Now we show that the non-local part also fulfills these assumptions \textcolor{black}{if it is uniformly bounded and Lipschitz.}
\begin{lemma}\label{lemma lipschitz kernal}
Suppose that $K$ is \textcolor{black}{uniformly bounded} and Lipschitz. Then for all $\rho,\mu\in \mathcal{P}_2(\bR^d),~ z \in \mathbb{R}^d$, we have
\begin{align}
& \int_{\bR^d} \| K*\rho(z) - K*\mu(z)\|^2 ~ d\rho(z) \leq C W_2^2(\rho, \mu), \label{assump 5}
\\ & \| K*\mu(z) \|\leq C(1+\|z\|),\label{assump 6}
\\& K*\mu\in W^{1,1}_{\text{loc}}(\bR^d)\label{assump 7}.
\end{align}
\end{lemma}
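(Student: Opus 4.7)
The plan is to handle the three claims essentially independently, using only the boundedness and the Lipschitz regularity of $K$; none of the steps are deep, and the main point of the lemma is bookkeeping so as to fit into Assumption~\ref{assumption : main assumptionso on drift and diffusion matrix}.

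For \eqref{assump 5}, the idea is a standard coupling argument. Fix $z\in\bR^d$ and any $\gamma\in\Pi(\rho,\mu)$. Writing
\[
K*\rho(z)-K*\mu(z)=\int_{\bR^{2d}}\bigl(K(z-x)-K(z-y)\bigr)\,d\gamma(x,y),
\]
the Lipschitz hypothesis on $K$ together with Jensen (or Cauchy-Schwarz) yields
\[
\|K*\rho(z)-K*\mu(z)\|^2 \leq C\int_{\bR^{2d}}\|x-y\|^2\,d\gamma(x,y),
\]
with $C$ depending only on the Lipschitz constant of $K$. Crucially the right-hand side is independent of $z$, so integrating against $\rho$ in $z$ preserves the inequality, and taking the infimum over $\gamma\in\Pi(\rho,\mu)$ gives the desired bound by $CW_2^2(\rho,\mu)$.

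For \eqref{assump 6}, since $K$ is uniformly bounded,
\[
\|K*\mu(z)\|\leq \int_{\bR^d}\|K(z-x)\|\,d\mu(x)\leq \|K\|_\infty,
\]
and $\|K\|_\infty\leq C(1+\|z\|)$ trivially. For \eqref{assump 7}, Rademacher's theorem guarantees that the Lipschitz function $K$ is differentiable a.e.\ with $\|\nabla K\|_\infty$ bounded by its Lipschitz constant, so in the distributional sense $\nabla(K*\mu)=(\nabla K)*\mu$ is again uniformly bounded. Hence $K*\mu\in W^{1,\infty}(\bR^d)\hookrightarrow W^{1,1}_{\mathrm{loc}}(\bR^d)$.

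I do not anticipate a genuine obstacle here: the only subtle point is making sure that the inequality obtained in \eqref{assump 5} is pointwise in $z$ before integrating, so that the $z$-integral simply produces the factor $\int d\rho(z)=1$ and no additional dependence on $\rho$ sneaks in. The other two items are immediate consequences of the two assumed properties of $K$.
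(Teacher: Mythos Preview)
Your proposal is correct and follows essentially the same approach as the paper. The only cosmetic difference is that for \eqref{assump 5} the paper phrases the coupling argument via random variables $X\sim\mu$, $Y\sim\rho$ and the formulation $W_2^2(\rho,\mu)=\inf\mathbb{E}\|X-Y\|^2$, whereas you use a transport plan $\gamma\in\Pi(\rho,\mu)$ directly; these are the same argument in different notation, and your explicit invocation of Rademacher for \eqref{assump 7} is in fact slightly more careful than the paper, which tacitly uses $\|\nabla K\|\leq C$.
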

\begin{proof}
We first prove \eqref{assump 5}. We will use the following equivalent formulation of the Wasserstein distance \cite{villani2021topics}
\begin{equation}
\label{W2 def2}
    W_2^2(\rho,\mu)=\inf\Big[\mathbb{E}(\|X-Y\|^2)\Big],
\end{equation}
where the infimum is taken over all couples of random variables $X$ and $Y$ with $Y\sim \rho$ and $X\sim \mu$.

Now let $\mu,\rho \in \cP_2(\bR^d)$ and take random variables $X$ and $Y$ with $Y\sim \rho$ and $X\sim \mu$. We have
\begin{align*}
\int_{\bR^d} \| K*\rho(z) - K*\mu(z)\|^2 ~ d\rho(z) =& \int_{\bR^d} \| \int_{\bR^d}K(z-z')\big(d\rho(z')-d\mu(z')\big)\|^2 ~ d\rho(z) 
\\
=& \int_{\bR^d} \| \mathbb{E}[K(z-Y)-K(z-X)] \|^2 ~ d\rho(z) 
\\
\leq&\int_{\bR^d}\mathbb{E}[\|K(z-Y)-K(z-X)\|^2]  ~ d\rho(z)
\\
\leq&C \int_{\bR^d}\mathbb{E}[\|Y-X\|^2]  ~ d\rho(z)=C \mathbb{E}[\|Y-X\|^2].
\end{align*}
Taking infimum over all $X$ and $Y$ and using \eqref{W2 def2} yields \eqref{assump 5}. \textcolor{black}{Verifying \eqref{assump 6} is straightforward by the uniform bound on $K$.} Finally, we check \eqref{assump 7}. Let $\Omega$ be an arbitrary compact set in $\bR^d$. Firstly it is clear that $ K*\mu(z)\in L^1_{\text{loc}}(\bR^d)$ since $K$ is uniformly bounded.
 Let $i,j\in {1,\ldots,d}$. It remains to show $\partial_{z_{j}} K_i*\mu(z)\in L^1_{\text{loc}}(\bR^d)$. In fact, since $\| \nabla K_i \|\leq C$, we have
 
\begin{align*}
    \int_{\Omega} \|  \int \partial_{z_{j}} K_i(z-z') d\mu(z') \| dz \leq& \int_{\Omega} \|  \int \nabla K_i(z-z') d\mu(z') \| dz \leq  C |\Omega|.
\end{align*}
This completes the proof of this lemma.
\end{proof}
We now discuss concrete applications of our work.
\subsection{Linear Wigner FPE}\label{sec example linear wigner}
The Wigner Fokker-Planck equation is the quantum mechanical analogue to the classical VFPE discussed in Section \ref{section examples kfpe}. It has been used in the modelling of semiconductor devices, see \cite{markowich2012semiconductor} and references therein. In particular, the linearized Wigner Fokker-Planck equation is
\begin{align}\label{eq linear WIGNER FPE}
\partial_t \rho
=&
\textcolor{black}{- v\cdot \nabla_x \rho }+ \beta \divv_v\big( v \rho \big) + \sigma \Delta_v \rho  + \alpha \Delta_x \rho + 2\lambda \divv_v(\nabla_x \rho),
\end{align}
where the variables $x,v\in\bR^d$, and the constant $\beta>0$ is the friction parameter, and $\sigma,\alpha,\lambda>0$ form the diffusion matrix of the system. The above equation is an instance of \eqref{eq main PDE} with
$$
b(x,v)=\begin{pmatrix}
    \textcolor{black}{\big(\frac{\beta \lambda}{\sigma}+1\big)v}
    \\
  0
    \end{pmatrix},
\qquad
A=
\begin{pmatrix}
\alpha & \lambda
\\
\lambda & \sigma
\end{pmatrix},
\qquad
f(x,v)=f(v)=\frac{\beta}{\textcolor{black}{2}\sigma} \|v\|^2,
$$
Note that each entry of $A$ stands for a $d\times d$ identity matrix times that entry. Note that the above equation is local and non-degenerate. In the operator-splitting scheme, one needs not to perturb the diffusion matrix $A$ (see discussion at the end of the introduction of the scheme in Section \ref{sec the scheme}).

It should be mentioned that the full Wigner Fokker-Planck equation includes a pseudo-differential operator acting on a non-local term coupled to the Poisson equation. At the moment, it is not clear to us whether we can extend our results to the full equation.

\subsection{Vlasov-Fokker-Planck Equation (VFPE)}\label{section examples kfpe}
The Vlasov-Fokker-Planck equation, which describes the probability of finding a particle at time $t$ with position $x\in \bR^d$ and velocity $v\in\bR^d$ moving under the influence of an external potential $\nabla g$, an interaction force $K$, a frictional force $\nabla f$ and a stochastic noise, is given by
\begin{equation*}
    \partial_t \rho = \textcolor{black}{-v\cdot\nabla_x\rho +\nabla g \cdot \nabla_v \rho} +\divv_v\big(\rho  K\ast\rho \big)+\divv_v \big( \rho  \nabla f \big) + \Delta_v \rho.
\end{equation*}
It is the forward Kolmogorov equation of the follow stochastic differential equation
\begin{equation*}
 \begin{cases}
 &dX_t=V_tdt
    \\
    &dV_t=-\big(K*\rho_t(X_t)+\nabla g(X_t)\big)dt-\nabla f(V_t)dt+\sqrt{2}dW_t 
    \\
    &\rho_t=\text{Law}(X_t,V_t).
 \end{cases} 
\end{equation*}
The VFPE is a special case of \eqref{eq main PDE} with
\begin{equation}\label{eq b and A for KFPE}
b[\rho](x,v)=\begin{pmatrix}
    v
    \\
   - \big(\nabla g(x) + K * \rho (x) \big)
    \end{pmatrix},
    \qquad 
A=
\begin{pmatrix}
0 & 0 
\\
0 & I
\end{pmatrix},  
\quad f(x,v)=f(v),\quad (x,v)\in\bR^{2d}.
\end{equation}
When there is no interaction (i.e., $K=0$) the VFPE reduces to the Kramers equation. As mentioned in the introduction, various variational schemes have been developed for the Kramers equation \cites{duong2014conservative, Huang00,carlen2004solution,marcos2020solutions}, see \cite{adams2021entropic} for extensions of these work to non-linear models. This paper not only provides a novel scheme but also incorporates the interaction force.

\subsection{Regularized Vlasov-Poisson-Fokker-Planck equation}
\label{sec reg VPFPE}
The Vlasov-Poisson-Fokker-Planck equation is given by
\begin{equation}\label{eq vlasov-poisson-fokker-planck}
\partial_t \rho = \textcolor{black}{-v\cdot\nabla_x\rho+\nabla \big( g (x)+\phi[\rho](x)\big)\cdot \nabla_v \rho } -\beta\divv_v\big( \rho v \big) + \sigma \Delta_v \rho.
\end{equation}
for positive constants $\sigma,\beta$ and variables $x,v\in\bR^d$, where $\phi$ solves the Poisson equation 
\begin{equation*}
    \Delta \phi (x)=- \int_{\bR^d} \rho(x,v)dv,
\end{equation*}
the solution of which is 
\begin{equation}\label{VPFPE the non-local term}
    \phi(x)= \int_{\bR^{2d}} \Gamma(x-y)\rho(y,v)dydv, 
\end{equation}
for $\Gamma$ defined as 
\begin{equation*}
    \Gamma(r):= 
    \begin{cases}
   \frac{\omega_d}{\|r\|^{\frac{d-2}{2}}} & \text{for}~d>2,
    \\
  \omega_2\log\|r\|  &\text{for}~d=2,
    \end{cases}
\end{equation*}
where $\omega_d$ is the surface area of the unit ball in $\bR^d$. 
%
This equation is of great importance in plasma physics, as it models a cloud of charged particles influencing each other through a Coulomb interaction, whilst subject to deterministic and random forcing, and friction. Since $\Gamma$ is singular our methods can not be directly applied where it is easy to check that \eqref{eq assump 4} fails to hold. However, if we consider $\phi^\epsilon$ defined analogously to \eqref{VPFPE the non-local term} but with $\Gamma$ replaced by 
\begin{equation*}
    \Gamma^\epsilon(r)= 
    \begin{cases}
   \frac{\omega_d}{\big(\|r\|^2+\epsilon\big)^{\frac{d-2}{2}}} & \text{for}~d>2
    \\
   \frac{\omega_d}{2}\log \big(\|r\|^2+\epsilon \big) &\text{for}~d=2
    \end{cases},
\end{equation*}
then we arrive at the regularised Vlasov-Poisson Fokker-Planck equation
\begin{equation}\label{eq reg vlasov-poisson-fokker-planck}
\partial_t \rho^\epsilon = \textcolor{black}{-v\cdot\nabla_x\rho^\epsilon+\nabla \big( g (x)+\phi^\epsilon[\rho^\epsilon](x)\big)\cdot \nabla_v \rho^\epsilon } -\beta\divv_v\big( \rho^\epsilon v \big) + \sigma \Delta_v \rho^\epsilon.
\end{equation}

Here we have regularised the Kernel appearing in the convolution (this is different from the regularisation discussed in Section \ref{section entropy regularisation}). For any $\epsilon>0$, $\Gamma^\epsilon$ is no longer singular, and  \textcolor{black}{$\|\nabla \Gamma^\epsilon\|$ is uniformly bounded.} Moreover, $\nabla \Gamma^\epsilon$ is Lipschitz, indeed, the Hessian is uniformly bounded which can be seen from the following explicit computations, for $d\geq 2$ we have
\begin{equation*}
    \partial_{x_i} \partial_{x_j} \Gamma^{\epsilon} (x) = C_d \begin{cases}
    \frac{1}{(\|x\|^2+\epsilon)^{\frac{d}{2}}}-\frac{dx_i^2}{(\|x\|^2+\epsilon)^\frac{d+2}{2}} & ~\text{if}~i=j,
    \\
   -\frac{dx_ix_j}{(\|x\|^2+\epsilon)^{\frac{d+2}{2}}} &~\text{if}~i \neq j,
    \end{cases} 
\end{equation*}
for some constant $C_d$ depending only on the dimension. Hence by Lemma \ref{lemma lipschitz kernal} assumptions \eqref{eq assump 4} and \eqref{eq assump 3} are satisfied.

The solutions $\rho^\epsilon$ to \eqref{eq reg vlasov-poisson-fokker-planck} have been shown to converge (as $\epsilon\to 0$) to the solution of the original system \eqref{eq vlasov-poisson-fokker-planck} \cite{carrillo1995initial}. One-step variational schemes (in the space of probability measures) have already been proposed for \eqref{eq reg vlasov-poisson-fokker-planck}, see \cite{huang2000variational}. However, the cost function used in \cite{huang2000variational} is not a metric, the free energy depends on the time step \textit{and}  contains a mix of conservative and dissipative terms. Our approach in the present paper naturally splits the conservative and dissipative dynamics.

\subsection{A generalised Vlasov-Langevin equation}

Next we consider the following generalised Vlasov-Langevin equation \cites{ottobre2011asymptotic,Duong2015NonlinearAnalysis}
\begin{equation}
\label{eq: GLE}
\partial_t\rho =-p\cdot\nabla_q\rho +  \big(\cA(q)+K\ast \rho (q)-\sum_{j=1}^m\Lambda^j z^j  \big) \cdot \nabla_p \rho+\sum_{j=1}^m\mathrm{div}_{z^j}[(\Lambda^j p +\alpha^j\, z^j)\rho ]+\Delta_z\rho_t.
\end{equation}
Note that in the above equation, the coordinates are $(q,p,z)\in \bR^{2d+md}$, with $q,p \in \bR^{d}$ and $z\in \bR^{md}$ for some $m \in \bN$. Equation \eqref{eq: GLE} is the forward Kolmogorov equation of the SDE system 
\begin{equation}
\label{eq: SDE of GVD}
\begin{cases}
dQ_t=P_t\,dt,
\\dP_t=-\cA(Q_t)\,dt-K\ast\rho_t(Q_t)\,dt+\sum_{j=1}^m\Lambda^j\,Z_t^j\,dt,
\\dZ^j_t=-\Lambda^j\,P_t\,dt-\alpha^j\, Z^j_t\,dt+\sqrt{2}\,dW^j_t,~~~~~j=1,\ldots,m.
 \\
    \rho_t=\text{Law}(Q_t,P_t,Z_t^1,\ldots,Z_t^m),
\end{cases}
\end{equation}  
where $W^j_t$ are independent $d-$dimensional Brownian motions, $\cA:\bR^{d}\to\bR^{d}$ is an external potential, $K:\bR^{d}\to\bR^{d}$ an interaction kernel, $\Lambda^j,\alpha^j \in \bR^{d\times d}$ constant diagonal matrices $\forall j\in\{1,\ldots,m\}$. When $\cA$ is the gradient of a potential and no Kernel is present, $K=0$, then \eqref{eq: SDE of GVD} can be viewed as the coupling of a deterministic Hamiltonian system $(Q_t,P_t)$ to a heat bath $Z_t$, the literature on this subject is vast. In this setup, for large $m$ the Markovian system \eqref{eq: SDE of GVD} approximates the Generalised Langevin equation (GLE). The GLE serves as a standard model in non-Markovian non-equilibrium statistical mechanics, where the Hamiltonian system is in contact with one or more heat baths. The heat baths are modeled by the linear wave equation and are initialised according to Gibbs distribution, see  \cites{kupferman2004fractional,ottobre2011asymptotic,rey2006open} and references therein. When $K\neq 0$, the mean field term $K*\rho$ models the particle interactions in the underlying deterministic system (via the positions $Q_t$). In this case, \eqref{eq: SDE of GVD} is the McKean-Vlasov limit of a system of weakly interacting particles \cites{Duong2015NonlinearAnalysis}. 

Again the generalised Vlasov-Langevin equation is another example of \eqref{eq main PDE} where  free vector field, diffusion matrix, and potential energy are given by
\begin{align*}
    b[\rho](q,p,z)=\begin{pmatrix}
    p\\ - \cA(q)-K\ast\rho(q)+\sum_{j=1}^m\Lambda^j z^j
    \\ 
    -\Lambda^1 p
    \\
    \vdots
    \\
    -\Lambda^m p
    \end{pmatrix}, \quad A=\begin{pmatrix}
    0&0&0\\
    0&0&0\\
    0&0& I
    \end{pmatrix}, \quad f(q,p,z)=f(z)=\sum_{j=1}^m\frac{1}{2}\|\alpha^j \textcolor{black}{z^j}\|^2, 
\end{align*}

where $I$ is the $md\times md$ identity matrix. 
\subsection{A degenerate diffusion equation of Kolmogorov-type}
The final example that we consider is the following non-linear degenerate  equation of Kolmogorov type
\begin{equation}
    \label{eq: gKramers}
\partial_t\rho = -\sum_{i=2}^{n}\textcolor{black}{x_i \cdot \nabla_{x_{i-1}} \rho}+\divv_{x_n}(\nabla f(x_n)\rho)+\Delta_{x_n} \rho.
\end{equation}
In the above equation, the coordinates are $\x=\begin{pmatrix}
    x_1,
    x_2,
    \ldots,
    x_{n-1},
    x_n
    \end{pmatrix}^T$, where $x_i\in \bR^{d}$ for each $i\in\{1,\ldots ,n\}$.
Equation \eqref{eq: gKramers} is the forward Kolmogorov equation of the associated stochastic differential equations
\begin{equation}\label{eq: SDE1}
\begin{cases}
&dX_1=X_2\,dt
\\&dX_2=X_3\,dt
\\&\quad\vdots
\\&dX_{n-1}=X_{n}\,dt
\\&dX_n=-\nabla f(X_n)\,dt +\sqrt{2}\, dW_t,
\end{cases}
\end{equation}
where $W_t$ is a $d$-dimensional Wiener process. System \eqref{eq: SDE1} describes the motion of $n$ coupled oscillators connected to their nearest neighbours with the last oscillator additionally forced by a random noise which propagates through the system. The simplest cases of $n=1,n=2$ correspond to the heat equation and Kramers' equation (with no background potential) respectively.  When $n>2$ this type of equations arise as models of simplified finite Markovian approximations of generalised Langevin dynamics \cite{ottobre2011asymptotic}, %
or harmonic oscillator chains \cites{bodineau2008large,delarue2010density}. Recent works \cites{DuongTran17,adams2021entropic} have constructed a one-step scheme for \eqref{eq: gKramers}, however, the cost function used there (the mean squared derivative cost function \cite{DuongTran18}[(11)]), although explicit, does not take a simple form.

Equation \eqref{eq: gKramers}  is yet another special case of \eqref{eq main PDE} with the following divergence free vector field, diffusion matrix, and potential energy
\begin{equation}\label{eq coef of degenerate diffusion}
b(\x)=(
    x_2
    ,
    x_3
    ,
    \ldots
    ,
    x_n
 ,
    0
    )^T,
~~~~
~~~~
A=
\begin{pmatrix}
0 & 0
\\
0 & I
\end{pmatrix},
~~~~
~~~~
f(\x)=f(x_n),
\end{equation}
where, in the matrix $A$, $I$ is the $d\times d$-dimensional identity matrix, and remaining elements are all $0$.
\appendix

\section{Appendix}

In this section, we provide detailed computations and proof for some technical results used in previous sections for the completeness.
\begin{lemma}\label{lemma 1}
For any $h>0$, and any $\mu$ and $\nu$ in $\cP_2(\bR^d)$, it is true that 

\begin{equation}\label{eq moments and wasserstein}
M(\nu)\leq 2\big(W_2^2(\mu,\nu)+M(\mu)\big)
\end{equation}

and 

\begin{equation}\label{eq moments and weighted wasserstein}
   M(\nu)\leq C\big(W_{c_{h}}(\mu,\nu)+M(\mu)\big).
\end{equation}
 \begin{proof}
 The result \eqref{eq moments and wasserstein} for $W_2$ is obvious. For \eqref{eq moments and weighted wasserstein} just use \eqref{eq moments and wasserstein} in conjunction with \eqref{eq 2}. 
 \end{proof}
\end{lemma}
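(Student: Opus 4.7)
The plan is to treat the two inequalities in turn, with the second following almost immediately from the first once the comparison between the cost functions is in place. The underlying principle for \eqref{eq moments and wasserstein} is the elementary bound $\|y\|^2 \le 2\|x\|^2 + 2\|x-y\|^2$, which is just $(a+b)^2 \le 2(a^2+b^2)$ applied to $y = x + (y-x)$.

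To carry out the first step, I would fix any coupling $\gamma \in \Pi(\mu,\nu)$ and write
\begin{equation*}
M(\nu) = \int_{\bR^d} \|y\|^2\, d\nu(y) = \int_{\bR^{2d}} \|y\|^2\, d\gamma(x,y) \le 2 \int_{\bR^{2d}} \|x\|^2\, d\gamma(x,y) + 2 \int_{\bR^{2d}} \|x-y\|^2\, d\gamma(x,y),
\end{equation*}
using the marginal property $\int \|x\|^2\, d\gamma = M(\mu)$ on the first term. Taking the infimum over $\gamma \in \Pi(\mu,\nu)$ in the second term yields $M(\nu) \le 2M(\mu) + 2W_2^2(\mu,\nu)$, which is exactly \eqref{eq moments and wasserstein}.

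For \eqref{eq moments and weighted wasserstein}, I would apply Lemma \ref{lemma the cost function i.e matrix A}, specifically \eqref{eq 2}, which gives $\|x-y\|^2 \le C\, c_h(x,y)$ for all $x,y \in \bR^d$. Integrating against the optimal plan for $W_{c_h}(\mu,\nu)$ and taking the infimum yields $W_2^2(\mu,\nu) \le C W_{c_h}(\mu,\nu)$. Substituting this bound into \eqref{eq moments and wasserstein} delivers \eqref{eq moments and weighted wasserstein}.

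There is no real obstacle here: the statement is essentially a bookkeeping lemma relating moments along couplings, and the only non-trivial ingredient is the comparison \eqref{eq 2} between $\|\cdot\|^2$ and $c_h$, which has already been established.
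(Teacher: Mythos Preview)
Your proof is correct and follows exactly the route indicated in the paper: the first inequality is the elementary bound $\|y\|^2 \le 2\|x\|^2 + 2\|x-y\|^2$ integrated against a coupling, and the second follows by combining it with \eqref{eq 2}. You have simply spelled out what the paper calls ``obvious''.
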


\begin{lemma}\cite{jordan1998variational}*{Proposition 4.1}\label{lemma appendix 1}  
There exists a $C>0$ and $0<\alpha<1$ such that 
\begin{equation}\label{eq appendix 1}
    H(\mu)\geq-C(M(\mu)+1)^{\alpha},~~\forall \mu\in \cP_2^r(\bR^d)
    \quad\textrm{and}\quad
   \textcolor{black}{ H_{-}}(\mu)\leq C(M(\mu)+1)^{\alpha},~~\forall \mu\in \cP_2^r(\bR^d).
\end{equation}

Moreover, $H$ is  weakly lower semi-continuous under bounded moments, i.e., if $\{\mu_k\}_{k\in \bN}\subset \cP_2(\bR^d)$, $\mu \in \cP_2(\bR^d)$ with $\mu_k\rightharpoonup\mu$, and there exists $C>0$ such that $M(\mu_k),M(\mu)< C$ for all $k\in \bN$, then
\begin{equation}\label{eq appendix l.s.c of entropy}
    H(\mu)\leq \liminf_{k\to\infty}H(\mu_k).
\end{equation}  
\end{lemma}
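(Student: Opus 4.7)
The plan is to derive both pointwise bounds from a single Fenchel–Young estimate for the convex function $\phi(\rho) = \rho \log \rho$, whose convex conjugate is $\phi^*(y) = e^{y-1}$. Fix any $\gamma \in (0,2)$ and $c > 0$, and set $V(x) := c\|x\|^{\gamma}$. Fenchel–Young applied with $y = -V(x)$ gives the pointwise inequality
\begin{equation*}
  \rho(x) \log \rho(x) \;\geq\; -V(x)\,\rho(x) - e^{-V(x)-1}.
\end{equation*}
Because $V$ grows at infinity, $\int_{\bR^d} e^{-V(x)-1}\,dx < \infty$. Jensen's inequality applied to the concave map $s\mapsto s^{\gamma/2}$ (valid since $\gamma/2\leq 1$) yields $\int \|x\|^\gamma \rho(x)\,dx \leq M(\mu)^{\gamma/2}$. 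Integrating the displayed pointwise bound against Lebesgue measure and combining these facts gives $H(\mu) \geq -c\, M(\mu)^{\gamma/2} - C$, which after absorbing constants becomes the first bound in \eqref{eq appendix 1} with $\alpha := \gamma/2 \in (0,1)$.

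The upper bound on $H_-(\mu)$ is the same estimate read from the other side. Rearranging the pointwise inequality produces $-\rho \log \rho \leq V\rho + e^{-V-1}$ valid on all of $\bR^d$, and the right-hand side is nonnegative so extending the integration from $\{\rho \leq 1\}$ to all of $\bR^d$ only inflates the bound. Thus
\begin{equation*}
  H_-(\mu) \;\leq\; c\int_{\bR^d} \|x\|^{\gamma} \rho(x)\,dx + C \;\leq\; C'\big(M(\mu)+1\big)^{\alpha},
\end{equation*}
by the same Jensen estimate.

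For the weak lower semi-continuity statement, I would appeal to the variational (dual) representation
\begin{equation*}
  H(\mu) \;=\; \sup_{\psi \in C_b(\bR^d)} \Big\{\, \int_{\bR^d} \psi\, d\mu - \int_{\bR^d} e^{\psi(x)-1}\, dx \,\Big\},
\end{equation*}
with the convention $H(\mu)=+\infty$ when $\mu$ is not absolutely continuous. The inequality $\geq$ is immediate from the pointwise Fenchel–Young above with $y = \psi(x)$; the reverse inequality, the only delicate point, follows by approximating the formal maximiser $\psi = 1 + \log \rho$ by an increasing sequence of bounded continuous functions and applying monotone convergence. Once this representation is in place, $H$ is lower semi-continuous with respect to narrow convergence as a supremum of narrowly continuous affine functionals (note that the second integral does not depend on $\mu$). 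The uniform second-moment bound in the hypothesis provides tightness of the sequence $\{\mu_k\}$ together with uniform integrability of $\|x\|^2$, so that weak convergence on the set $\{\nu : M(\nu) \leq C\}$ coincides with narrow convergence, and the lsc property transfers. The main obstacle is the $\leq$ direction of the dual representation, which is classical but must be handled with care on the low-density tails where $\log\rho$ is unbounded below.
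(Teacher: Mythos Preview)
The paper does not supply its own proof of this lemma: it is stated in the appendix with a bare citation to \cite{jordan1998variational}*{Proposition~4.1}, so there is nothing in the paper to compare your argument against. Your proposal is correct and is essentially the standard route (the Fenchel--Young/Legendre duality argument is precisely how such bounds are obtained in the JKO literature).

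Two small remarks. First, in this paper the symbol $\rightharpoonup$ is \emph{defined} as convergence against $C_b(\bR^d)$ test functions (see \eqref{eq weak convergence definition}), i.e.\ what you call narrow convergence; hence your paragraph about using the uniform second-moment bound to upgrade weak to narrow convergence is unnecessary here, and the dual representation already delivers lower semi-continuity directly. Second, in your sketch of the $\leq$ direction of the dual formula, the truncations $\psi_n := (1+\log\rho)\wedge n \vee (-n)$ are only measurable, not continuous; a further density/mollification step (or restricting the supremum to bounded measurable $\psi$, which suffices for lsc under narrow convergence once combined with Lusin's theorem) is needed to land in $C_b$. You flag this as the delicate point, which is fair, but be aware that the monotone-convergence step alone does not close the gap to $C_b$ test functions.
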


\begin{lemma}\label{lemma 25}
Let $h>0$. Given $\mu,\nu \in \cP_2^r(\bR^d)$ there exists $\gamma\in \Pi(\mu,\nu)$ such that 
$$ W_{c_{h}}(\mu,\nu)=(c_h,\gamma).$$

Moreover, the map $\gamma \mapsto (c_h,\gamma)$ is weakly lower semi-continuous.
\begin{proof}
See \cite{villani2008optimal}*{Theorem 4.1}.
\end{proof}
\end{lemma}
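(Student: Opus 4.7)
The plan is to establish both assertions by the direct method of calculus of variations, together with the standard truncation trick used to handle the unboundedness of the quadratic cost $c_h$.

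First I would address the weak lower semi-continuity of $\gamma \mapsto (c_h,\gamma)$. By Lemma \ref{lemma the cost function i.e matrix A}, $A_h$ is positive definite, hence $c_h$ is continuous and non-negative on $\bR^{2d}$. Writing $c_h = \sup_{n \in \bN}(c_h \wedge n)$ as a pointwise supremum of continuous bounded functions, monotone convergence yields $(c_h,\gamma) = \sup_n (c_h \wedge n,\gamma)$ for every $\gamma \in \cP(\bR^{2d})$. Each map $\gamma \mapsto (c_h \wedge n,\gamma)$ is continuous with respect to weak convergence, so the supremum is weakly lower semi-continuous.

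Next I would show that $\Pi(\mu,\nu)$ is weakly compact. Tightness is immediate: given $\varepsilon > 0$, choose compacts $K_\mu, K_\nu \subset \bR^d$ with $\mu(K_\mu^c), \nu(K_\nu^c) < \varepsilon/2$; then any $\gamma \in \Pi(\mu,\nu)$ satisfies $\gamma((K_\mu \times K_\nu)^c) \le \mu(K_\mu^c) + \nu(K_\nu^c) < \varepsilon$, so Prokhorov's theorem gives relative weak compactness. Weak closedness of $\Pi(\mu,\nu)$ follows by testing a weak limit against $\phi(x)$ and $\psi(y)$ with $\phi,\psi \in C_b(\bR^d)$ to recover the correct marginals.

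Finally I would combine these two ingredients by the direct method: since $\cF(\mu),\cF(\nu) < \infty$ implies finite second moments and hence $W_{c_h}(\mu,\nu) < \infty$ (take the product coupling as a competitor), a minimising sequence $\{\gamma_n\} \subset \Pi(\mu,\nu)$ admits a weakly convergent subsequence with limit $\gamma^* \in \Pi(\mu,\nu)$, and lower semi-continuity gives $(c_h,\gamma^*) \le \liminf_n (c_h,\gamma_n) = W_{c_h}(\mu,\nu)$, so $\gamma^*$ attains the infimum. There is no substantive obstacle here; the only mildly technical point is the unboundedness of $c_h$, which is handled by the truncation argument in the first paragraph, and everything else is a classical application of Prokhorov's theorem and the Portmanteau-type characterisation of weak convergence.
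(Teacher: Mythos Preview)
Your argument is correct and is precisely the standard direct-method proof (Prokhorov compactness of $\Pi(\mu,\nu)$ plus lower semi-continuity via monotone truncation of $c_h$) that the paper defers to by citing \cite{villani2008optimal}*{Theorem 4.1}. One minor slip: you invoke $\cF(\mu),\cF(\nu)<\infty$, which is not part of the hypotheses of the lemma---but this is harmless, since finite second moments (and hence $W_{c_h}(\mu,\nu)<\infty$ via the product coupling) follow directly from the assumption $\mu,\nu\in\cP_2^r(\bR^d)$.
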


\begin{lemma}[Lower Semi-Continuity of the functionals]\label{lemma 27}
Let $\{\nu_k\}_{k\in\bN}\subset \cP_2^r(\bR^d)$, $\mu,\nu \in \cP_2^r(\bR^d)$, with $\nu_k \rightharpoonup \nu$ as $k\to\infty$.  Assume that for all $k\in \bN$ the probability measures $\nu_k,\mu,\nu $ have uniformly bounded entropy and second moments. Then
\begin{equation}
   \cF(\nu) \leq \liminf_{k\to\infty} \cF(\nu_k)
   \quad \textrm{and}\quad 
   W_{c_{h}}(\mu,\nu) \leq \liminf_{k\to \infty} W_{c_{h}}(\mu,\nu_k).
\end{equation}

\begin{proof}
Let $\{\nu_k\},\mu,\nu$ be as assumed above, and $\{\gamma_k\}$ be the associated optimal plans in $W_{c_{h},\varepsilon}(\mu,\nu_k)$. 
Note $\{\gamma_k\}\subset \Pi(\mu,\{\nu_k\})$ (see notations Section \ref{sec: notation}). 
Since $\{\nu_k\}$ is weakly convergent then it is tight, and \cite{villani2008optimal}*{Lemma 4.4} implies that $\Pi(\mu,\{\nu_k\}) $ is so too. Extracting (and relabelling) a subsequence $\{\gamma_k\}$, we know that (as $k\to\infty$) $\gamma_k\rightharpoonup \gamma \in \cP(\bR^{2d})$. 
In fact $\gamma \in \Pi(\mu,\nu)$ since the weak convergence of $\gamma_k$ implies the weak convergence of its marginals (and we know $\nu_k \rightharpoonup \nu$). 
Now, the lower semi-continuity described in Lemma \ref{lemma 25} implies that
\begin{align*}
  \liminf_{k\to \infty} W_{c_{h}}(\mu,\nu_k)=
  \liminf_{k\to \infty}
  \frac{1}{2h}(c_h,\gamma_k)
   &
  \geq \frac{1}{2h}(c_h,\gamma)
  \geq W_{c_{h}}(\mu,\nu).
 \end{align*} 
The proof for $\cF$ is identical to \cite{adams2021entropic}*{Lemma 3.8}.
\end{proof}
\end{lemma}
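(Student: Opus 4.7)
The plan is to handle the two lower semi-continuity statements separately, in each case extracting a limiting object along a subsequence and then using a previously established lower semi-continuity result. Since we need to conclude a $\liminf$ inequality for the whole sequence, I will proceed by contradiction: it suffices to show that along any subsequence of $\{\nu_k\}$ one can extract a further subsequence realizing the $\liminf$ along which the desired inequality holds.

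For the weighted Wasserstein inequality, my first step is to invoke Lemma \ref{lemma 25} to pick, for each $k$, an optimal coupling $\gamma_k \in \Pi(\mu,\nu_k)$ with $W_{c_h}(\mu,\nu_k)=(c_h,\gamma_k)$. Next I would establish tightness of $\{\gamma_k\}$ in $\cP(\bR^{2d})$: the second marginals $\nu_k$ form a tight family (being weakly convergent), the first marginals are identically $\mu$, and tightness of the marginals transfers to tightness of the couplings (this is a standard fact; one concrete reference is Villani's Lemma 4.4). By Prokhorov I then extract $\gamma_{k_j}\rightharpoonup \gamma$, and I would check that $\gamma\in \Pi(\mu,\nu)$ by noting that taking marginals commutes with weak convergence and $\nu_{k_j}\rightharpoonup\nu$. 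Then the lower semi-continuity part of Lemma \ref{lemma 25} gives
\begin{equation*}
W_{c_h}(\mu,\nu)\leq (c_h,\gamma) \leq \liminf_{j\to\infty}(c_h,\gamma_{k_j}) = \liminf_{j\to\infty} W_{c_h}(\mu,\nu_{k_j}),
\end{equation*}
and a standard subsequence-of-subsequence argument upgrades this to the $\liminf$ along the full sequence.

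For the free energy inequality, I would split $\cF(\nu)=F(\nu)+H(\nu)$ and treat the two terms separately. The entropy term is handled directly by the second part of Lemma \ref{lemma appendix 1}: the uniformly bounded moments of $\{\nu_k\}$ together with weak convergence $\nu_k\rightharpoonup\nu$ (and $\nu\in\cP_2^r(\bR^d)$) yield $H(\nu)\leq \liminf_k H(\nu_k)$. For the potential term $F(\nu)=\int f\,d\nu$, I would exploit that $f$ is nonnegative and continuous together with the uniform moment bound; the Lipschitz bound on $f$ gives $f(x)\leq C(1+\|x\|)$, so uniform boundedness of $M(\nu_k)$ yields uniform integrability of $f$ against $\{\nu_k\}$. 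Approximating $f$ by the bounded continuous truncations $f\wedge R$, using weak convergence to pass to the limit for each $R$, and then sending $R\to\infty$ via monotone convergence together with the uniform tail control coming from the moment bounds produces $F(\nu)\leq \liminf_k F(\nu_k)$ (in fact equality). Adding the two inequalities gives the result.

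The main (mild) obstacle is the tightness upgrade for $\{\gamma_k\}$ and the care needed in transferring weak convergence on $\bR^d$ to convergence of the potential energy when $f$ is unbounded; both are resolved by the uniform moment hypothesis. The rest is a direct application of the existing lemmas in the appendix.
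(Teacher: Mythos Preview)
Your proposal is correct and follows essentially the same route as the paper: for $W_{c_h}$ you take optimal plans $\gamma_k$, use tightness of the marginals (via Villani's Lemma 4.4) to extract a weak limit $\gamma\in\Pi(\mu,\nu)$, and apply the lower semi-continuity of $\gamma\mapsto(c_h,\gamma)$ from Lemma \ref{lemma 25}; for $\cF$ the paper simply cites \cite{adams2021entropic}*{Lemma 3.8}, whose content is exactly the split $\cF=F+H$ you spell out, handling $H$ via Lemma \ref{lemma appendix 1} and $F$ via the moment bound and the linear growth of $f$. Your treatment is in fact slightly more careful than the paper's, since you make the subsequence-of-subsequence upgrade to the full $\liminf$ explicit.
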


\subsection{Well-Posedness}
\label{sec:appendix:wellposedness}

\begin{proof}[Proof of Proposition \ref{lemma : well-posedness of jko scheme}]
Let $0<h<1$ and $\mu,\nu \in \cP_2^r(\bR^d)$.
Define $J_{c_{h}}(\mu,\nu):=\frac{1}{2h}W_{c_{h}}(\mu,\nu)+\cF(\nu)$, then we have 
\begin{align}
    J_{c_{h}}(\mu,\nu) = \frac{1}{2h}W_{c_{h}}(\mu,\nu)+M(\mu)+\cF(\nu)-M(\mu)
    \geq& W_{c_{h}}(\mu,\nu)+M(\mu)+\cF(\nu)-M(\mu)
    \label{eq 39}
    \\
   \label{eq 40}
    \geq& C_1 M(\nu) + H(\nu) -M(\mu)
    \\
    \label{eq 49}
    \geq& C_1 M(\nu) - C_2(1+M(\nu))^{\alpha}+C_\mu,
\end{align}
where in \eqref{eq 39} we have used that $h<1$, in \eqref{eq 40} we used Lemma \ref{lemma 1} and the non-negativity of $f$, and in \eqref{eq 49} we used Lemma \ref{lemma appendix 1}.  
We emphasize that the constants $C_1,C_2>0$ are independent of $\mu,\nu$ and $C_\mu>0$ is independent of $\nu$. 
Inequality \eqref{eq 49} implies that $\nu \mapsto J(\mu,\nu) $ is bounded from below. 
Note that there exists a $\nu\in\cP_2^r(\bR^d)$ such that  $J_{c_{h}}(\mu,\nu)<\infty$, for example,  take $\nu=\mu$ (and the product plan). 

Let \textcolor{black}{$\{\nu_k\}$} be a minimising sequence and note that this implies $M(\nu_k),H(\nu_k)$ are uniformly bounded. The uniform boundedness of $M(\nu_k)$ implies tightness of $\{\nu_k\}$, and hence extracting a subsequence we have $\nu_k \rightharpoonup \nu^* \in \cP(\bR^d)$. Moreover, $\nu^* \in \cP_2(\bR^d)$ since uniformly bounded 2nd moments and weak convergence of $\{\nu_k\}$ implies that the limit has a bounded 2nd moment as well. Additionally, $\nu^* \in \cP_2^r(\bR^d)$ by the lower semi-continuity of $H$, see Lemma \ref{lemma appendix 1}. That $\nu^*$ is indeed the minimiser of \eqref{eq the minimiser} follows from the lower semi-continuity in Lemma \ref{lemma 27}. Finally, the uniqueness follows by linearity of $F(\cdot)$, convexity of $W_2(\mu,\cdot)$, and the strict convexity of $H(\cdot)$. 
\end{proof}
\section*{Acknowledgment} We would like to thank the anonymous  referees for useful comments and suggestions which have helped us to improve the presentation of the article.
%
%
%
%
%
%
%
\bibliographystyle{alpha}


\bibliography{BibFileGoesHere}  



\end{document}